\newcommand{\ee}{\varepsilon}
\newcommand{\nn}{\mathbb{N}}
\newcommand{\co}{\text{cof}}
\newcommand{\uuu}{\mathcal{U}}
\newcommand{\eps}{\varepsilon}
\newcommand{\Ndb}{\mathbb N}
\newcommand{\Qdb}{\mathbb Q}
\newcommand{\cal}{\mathcal}
\newcommand{\n}{\overline n}
\newcommand{\m}{\overline m}
\theoremstyle{plain}
\newtheorem{theorem}{Theorem}[section]
\newtheorem{lemma}[theorem]{Lemma}
\newtheorem{corollary}[theorem]{Corollary}
\newtheorem{proposition}[theorem]{Proposition}
\newtheorem{claim}{Claim}
\newtheorem{thmAlfa}{Theorem}
\theoremstyle{definition}
\newtheorem*{definition*}{Definition}
\newtheorem{definition}[theorem]{Definition}
\newtheorem{problem}[theorem]{Problem}
\theoremstyle{remark}
\newtheorem{remark}[theorem]{Remark}
\begin{document}
	
	
	\title[Asymptotic smoothness]{Asymptotic smoothness in Banach spaces, three-space properties and applications}
	
	\author{R.M.~Causey}
	\address{R.M~Causey}
	\email{rmcausey1701@gmail.com}

	\author{A.~Fovelle}
	\address{A.~Fovelle, Laboratoire de Math\'ematiques de Besan\c con, Universit\'e Bourgogne Franche-Comt\'e, 16 route de Gray, 25030 Besan\c con C\'edex, Besan\c con, France}
	\email{audrey.fovelle@univ-fcomte.fr}
	
	\author{G.~Lancien}
	\address{G.~Lancien, Laboratoire de Math\'ematiques de Besan\c con, Universit\'e Bourgogne Franche-Comt\'e, 16 route de Gray, 25030 Besan\c con C\'edex, Besan\c con, France}
	\email{gilles.lancien@univ-fcomte.fr}
	
	\subjclass[2020]{Primary: 46B20, 46B80; Secondary: 46B03, 46B10, 46B26}
	\keywords{Asymptotic smoothness in Banach spaces, Szlenk index, upper tree estimates, three space problem, coarse Lipschitz equivalences}
	\thanks{The second-named and third-named authors received support from the EIPHI Graduate School (contract ANR-17-EURE-0002).}
	
	\begin{abstract} We study four asymptotic smoothness properties of Banach spaces, denoted $\textsf{T}_p,\textsf{A}_p, \textsf{N}_p$, and $\textsf{P}_p$. We complete their description by proving the missing renorming characterization for $\textsf{A}_p$. We show that asymptotic uniform flattenability (property $\textsf{T}_\infty$) and summable Szlenk index (property $\textsf{A}_\infty$) are three-space properties. Combined with the positive results of the first-named author, Draga, and Kochanek, and with the counterexamples we provide, this completely solves the three-space problem for this  family of properties. We also derive from our characterizations of $\textsf{A}_p$ and $\textsf{N}_p$ in terms of equivalent renormings, new coarse Lipschitz rigidity results for these classes. 
	\end{abstract}
	
	\maketitle
	
	\setcounter{tocdepth}{1}
	
	\section{Introduction}
	
	A coarse Lipschitz embedding from a metric space $M$ into a metric space $N$ is a map which is bi-Lipschitz for large enough distances (see precise definitions in Section 6). In the seminal paper \cite{Ribe}, M. Ribe proved that if a Banach space $X$ coarse Lipschitz embeds into a Banach space $Y$, then $X$ is finitely crudely representable into $Y$, which means that all finite-dimensional subspaces of $X$ are linearly isomorphic, with a uniform distortion, to subspaces of $Y$. In other words, the local properties of Banach spaces, that are isomorphic properties of their finite dimensional subspaces (such as type, cotype, super-reflexivity,\ldots) are preserved under coarse Lipschitz embeddings. This initiated what is now called the Ribe program, which aims at finding purely metric characterizations of local properties of Banach spaces.  We refer to \cite{Naor2012} and \cite{Ball2013} for a discussion of its origins and  motivations, and for a presentation of the most striking results in this direction.  In the last twenty years, the asymptotic structure of Banach spaces, which, very vaguely speaking, deals with the structure of their finite-codimensional subspaces or with the properties of weakly null sequences and trees, also proved to be central in the non-linear geometry of Banach spaces. We refer the reader to the seminal works of N. Kalton (\cite{Kalton2013TAMS} and \cite{Kalton2012MathAnn} for instance) and to the survey \cite{GLZ2014} and references therein. However, despite the accumulation of quite a few important stability results, there is no general analogue of Ribe's rigidity theorem in the setting of the asymptotic geometry of Banach spaces. Each new result requires an ad hoc argument. One of the main applications of this work is to exhibit two new properties related to the asymptotic smoothness of Banach spaces, that are stable under coarse Lipschitz equivalences. 
	
	\smallskip
	In this article, we describe in details four different properties dealing with the asymptotic uniform smoothness of Banach spaces, that we shall denote $\textsf{T}_p,\textsf{A}_p, \textsf{N}_p$, and $\textsf{P}_p$. In Section 2, we give their definitions in terms of two-players games on a Banach space $X$. Let us just mention for this introduction that each of them coincides with the existence of a special  form of so-called upper $\ell_p$ estimates for weakly null trees in $X$.  
	
	\smallskip 
	In Section 3, we give the main characterizations of  these properties, which involve, upper $\ell_p$ estimates for weakly null trees, the existence of quantitatively good equivalent asymptotically uniformly smooth norms and, dualy, the behaviour of the Szlenk index. We refer the reader to Section 3 for the precise statements and definitions. However, for the purpose of this introduction, we anticipate two definitions.
	
	Let us first recall the definition of the Szlenk derivation.  For a Banach space $X$, $K\subset X^*$ weak$^*$-compact, and $\ee>0$, we let $s_\ee(K)$ denote the set of $x^*\in K$ such that for each weak$^*$-neighborhood $V$ of $x^*$, $\text{diam}(V\cap K)\ge \ee$. For $1\leqslant q<\infty$, we say $X$ has $q$-\emph{summable Szlenk index} provided there exists a constant $c>0$ such that for any $n\in\nn$ and any $\ee_1, \ldots, \ee_n \geqslant 0$ such that $s_{\ee_1}\ldots s_{\ee_n}(B_{X^*})\neq \varnothing$, $\sum_{i=1}^n \ee_i^q \leqslant c^q$.   In the $q=1$ case, we refer to this as \emph{summable Szlenk index} rather than $1$-summable Szlenk index. 
	
	We also define the \emph{modulus of asymptotic uniform smoothness} of $X$. If $X$ is infinite-dimensional, for $\sigma\geqslant 0$,  we define 
	\[\overline{\rho}_X(\sigma) = \underset{y\in B_X}{\sup}\ \underset{E\in \co(X)}{\inf}\  \underset{x\in B_E}{\sup} \|y+\sigma x\|-1,\] 
	where $\co(X)$ denotes the set of finite codimensional subspaces of $X$.
	
	We assume, for this introduction, that what we mean by weakly null trees is understandable and refer to Section 3 for a detailed discussion. We can now state our main new result from Section 3. It describes various characterizations of our property $\textsf{A}_p$.  The renorming characterization is completely new and will be crucial for our non-linear stability results.

	\begin{thmAlfa} 
		Fix $1<p<\infty$ and let $q$ be conjugate to $p$. Let $X$ be a Banach space. The following are equivalent  
		\begin{enumerate}[(i)]
			\item $X\in \textsf{\emph{A}}_p$. 
			\item There exists a constant $c>0$ such that for any weak neighborhood base $D$ at $0$ in $X$, any $n\in\nn$,  and any weakly null tree $(x_t)_{t\in D^{\leqslant n}}$ in the unit ball of $X$, there exists $t\in D^n$ such that 
			
			$$\forall a=(a_i)_{i=1}^n \in \ell_p^n,\ \ \Big\|\sum_{i=1}^n a_ix_{t|_i}\Big\|\le c\|a\|_p.$$ 
			\item There exists a constant $M\ge 1$ and a constant $C>0$, such that for any $\tau \in (0,1]$ there exists a norm $|\ |$ on $X$ satisfying $M^{-1}\|x\|_X\le |x|\le M\|x\|_X$ for all $x\in X$ and
			$$\forall \sigma\ge \tau,\ \ \overline{\rho}_{|\ |}(s)\le Cs^p.$$
			\item $X$ has $q$-summable Szlenk index.   
		\end{enumerate}
	\end{thmAlfa}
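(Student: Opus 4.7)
The plan is to prove the equivalences in the cycle $(iii)\Rightarrow(i)\Rightarrow(ii)\Rightarrow(iv)\Rightarrow(iii)$. The first three links repackage the known correspondence between game characterizations of asymptotic smoothness, upper $\ell_p$ tree estimates and the Szlenk index; the last link is the genuinely new renorming construction and is where the real obstacle lies.

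For $(iii)\Rightarrow(i)$, the key observation is that a norm whose modulus satisfies $\overline{\rho}_{|\ |}(\sigma)\le C\sigma^p$ for all $\sigma\ge\tau$ immediately furnishes a response strategy in the $\textsf{A}_p$-game at scales above $\tau$: at each round one uses the definition of $\overline{\rho}_{|\ |}$ to select a finite-codimensional subspace in which the next move adds at most roughly $C\sigma^p$ to the running norm. Summing these gains and applying the conjugacy $1/p+1/q=1$ via H\"older's inequality recovers the required $p$-estimate with a constant independent of $\tau$; letting $\tau\to 0$ then yields the full game condition. The implication $(i)\Rightarrow(ii)$ is a standard translation: a winning strategy for the $\textsf{A}_p$-game, played against an adversary who selects edges in a weakly null tree in $B_X$, designates a branch along which the upper $\ell_p$ estimate holds, possibly after a mild Ramsey-type refinement to absorb the discretization coming from the weak neighborhood base $D$.

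The implication $(ii)\Rightarrow(iv)$ follows from the classical Hahn--Banach interpolation between iterated Szlenk slices and weakly null trees. From a non-empty set $s_{\ee_1}\cdots s_{\ee_n}(B_{X^*})$ one constructs inductively a weakly null tree $(x_t)$ in $B_X$ and auxiliary functionals $x_k^*$ such that along every branch $t$ the pairings $\langle x_k^*, x_{t|_k}\rangle$ are controlled from below in terms of $\ee_k$. Applying (ii) along the branch supplied by the tree and testing against an appropriately weighted vector in $\ell_p^n$ then yields $\sum_{i=1}^n\ee_i^q\lesssim c^q$ by the H\"older duality between $\ell_p$ and $\ell_q$.

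The main step is $(iv)\Rightarrow(iii)$, which I expect to be the hardest and most delicate. The plan is, for each $\tau\in(0,1]$, to construct a dual norm on $X^*$ whose unit ball is a controlled inflation of the iterated Szlenk slices $s_{\ee_1}\cdots s_{\ee_k}(B_{X^*})$, weighted by quantities of the form $(\sum_{i\le k}\ee_i^q)^{1/q}$, with the truncation rule that individual scales $\ee_i<\tau$ are suppressed from the weighting. The $q$-summability of the Szlenk index should then ensure that the resulting ball is equivalent to $B_{X^*}$ with an equivalence constant depending only on $c$ and independent of $\tau$, providing the universal factor $M$. Dualizing should give the predual modulus bound $\overline{\rho}_{|\ |}(\sigma)\le C\sigma^p$ for $\sigma\ge\tau$, since by design the truncated construction is insensitive to fluctuations below scale $\tau$. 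The principal obstacle is the joint quantitative control: keeping both constants $M$ and $C$ uniform in $\tau$ forces a careful balance between the truncation scale, the weighting exponents and the $q$-summability constant, precisely at those Szlenk levels that cross the threshold $\tau$.
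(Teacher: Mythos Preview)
Your cycle differs from the paper's, which proves $(i)\Leftrightarrow(ii)$ and then $(ii)\Rightarrow(iii)\Rightarrow(iv)\Rightarrow(ii)$. Crucially, the new renorming step in the paper is $(ii)\Rightarrow(iii)$, not $(iv)\Rightarrow(iii)$, and it is carried out entirely on the primal side: one defines $f_n(x)$ via a $\sup$--$\inf$--$\sup$ over weakly null trees of height $n$ in $B_X$, establishes the key recursion $\limsup_U f_n(x+\sigma x_U)^p\le f_{n+1}(x)^p+\sigma^p$, averages $g_N^p=N^{-1}\sum_{n<N}f_n^p$, and takes the Minkowski functional of the closed convex hull of $\{g_N<1\}$ to obtain $|\cdot|_N$. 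Uniformity of the equivalence constant $M$ in $\tau$ is automatic because each $f_n$ already satisfies $A^{-1}\|x\|\le f_n(x)\le\|x\|$ with $A$ depending only on the tree constant from $(ii)$; the parameter $\tau$ enters only through choosing $N$ large enough. The implication $(iii)\Rightarrow(iv)$ is then a short argument via Proposition~\ref{moduliduality} and standard Szlenk calculus, and $(iv)\Rightarrow(ii)$ is the known argument (functionals indexed by leaves, stabilized by Corollary~\ref{prune2}).

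Your proposed $(iv)\Rightarrow(iii)$ via a dual Szlenk-slice construction with truncation is a genuinely different route. It may be viable, but the sketch leaves the central difficulty unresolved: you must show that suppressing scales below $\tau$ in a Szlenk-based dual ball yields, after predualizing, the modulus bound $\overline{\rho}_{|\,|}(\sigma)\le C\sigma^p$ precisely for $\sigma\ge\tau$, with $C$ and the equivalence constant both independent of $\tau$. The duality in Proposition~\ref{moduliduality} is pointwise, not uniform across a truncated range, so this translation is not automatic; and the usual Szlenk-based renormings produce a single norm capturing global power type, which is exactly what distinguishes $\textsf{T}_p$ from $\textsf{A}_p$. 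The paper's primal construction from $(ii)$ sidesteps this because the finite tree height already furnishes the correct discretization, and averaging over heights is what produces the $\tau$-dependence while keeping $M$ fixed. A smaller issue: your $(iii)\Rightarrow(i)$ glosses over coefficients with $|a_i|<\tau$, where the chosen norm gives no $p$-AUS information; a crude bound costs $O(n\tau)$, so $\tau$ must depend on $n$, and you should check the resulting game constant is uniform in $n$. The paper avoids this by not proving $(iii)\Rightarrow(i)$ directly.
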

	
	\smallskip 
	Let us recall that a property $(P)$ of Banach spaces is separably determined if a Banach space $X$ has $(P)$ if and only if all its separable subspaces have $(P)$. In Section 4, we provide a short and unified proof of the fact that all these properties are separably determined.
	
	\smallskip 
	Section 5 is devoted to the study of the three-space problem for $\textsf{T}_p,\textsf{A}_p, \textsf{N}_p$ and $\textsf{P}_p$. A property $(P)$ of Banach spaces is a \emph{Three-Space Property} (3SP in short) if it passes to quotients and subspaces and a Banach space $X$ has $(P)$ whenever it admits a subspace $Y$ such that $Y$ and $X/Y$ have $(P)$. The properties $\textsf{T}_p,\textsf{A}_p, \textsf{N}_p$ and $\textsf{P}_p$ pass quite simply to subspaces, quotients or isomorphs and it was proved in \cite{DKC} that $\textsf{P}_p$ is a 3SP. First, we take the opportunity of this paper to provide a more direct argument for this. Then, with a single example, we show  
	
	\begin{thmAlfa}
		Let $p\in (1,\infty)$. Then $\textsf{\emph{T}}_p,\textsf{\emph{A}}_p$, and $\textsf{\emph{N}}_p$ are not three-space properties.
	\end{thmAlfa}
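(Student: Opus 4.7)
The plan is to exhibit a single Banach space $X$ containing a closed subspace $Y$ such that both $Y$ and $X/Y$ lie in $\textsf{A}_p$, while $X$ itself fails $\textsf{T}_p$. Given the implications $\textsf{A}_p \Rightarrow \textsf{N}_p \Rightarrow \textsf{T}_p$ among these properties (with $\textsf{A}_p$ the strongest and $\textsf{T}_p$ the weakest), such a single example simultaneously refutes the three-space property for each of $\textsf{T}_p$, $\textsf{A}_p$ and $\textsf{N}_p$.

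The natural source of candidates is twisted sums fitting into a short exact sequence $0 \to \ell_p \to X \to \ell_p \to 0$: both ends are then isomorphic to $\ell_p$, the prototypical member of $\textsf{A}_p$ (its Szlenk derivations decay like $\varepsilon^q$, yielding $q$-summable Szlenk index for $1/p+1/q=1$, whence $\ell_p \in \textsf{A}_p$ by Theorem A(iv)). The twist is governed by a quasi-linear map $F \colon \ell_p \to \ell_p$, and the freedom in choosing $F$ is exploited to destroy the asymptotic smoothness of the total space. A Kalton--Peck-type cocycle, or an iterated-logarithmic variant, is the classical vehicle for producing a logarithmic blow-up along every branch of any sufficiently deep weakly null tree in $X$.

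To verify that $X \notin \textsf{T}_p$, I would construct normalized weakly null trees $(x_t)_{t \in D^{\leqslant n}}$ in $B_X$ of arbitrary depth $n$ for which every branch $t \in D^n$ violates the upper $\ell_p$-estimate with constant independent of $n$. A natural candidate tree is obtained by lifting the canonical basis of the quotient $\ell_p$ to $X$ via a bounded (non-linear) section of the quotient map; the cocycle then forces the logarithmic blow-up along each branch. Alternatively, working dually, one can invoke Theorem A(iv) and show that no equivalent norm on $X$ renders the Szlenk derivations of $B_{X^*}$ $q$-summable, so that even the characterization by renormings in Theorem A(iii) must fail.

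The main obstacle is the quantitative analysis of weakly null trees in the twisted sum: projecting a branch down to $\ell_p$ shows that the upper $\ell_p$-estimate holds \emph{modulo the cocycle}, but the cocycle contribution is neither weakly null nor tree-null in the $\ell_p$-subspace, so no direct pruning argument eliminates it. A careful combinatorial selection, combined with the specific growth of the chosen cocycle on sparse sums of basis vectors, is required to show that every sufficiently deep branch retains an unavoidable obstruction of size $\omega(n) \to \infty$. Once this lower bound is established for all branches uniformly, the failure of $\textsf{T}_p$ propagates to $\textsf{N}_p$ and $\textsf{A}_p$ by the implications above, yielding all three negative three-space results simultaneously.
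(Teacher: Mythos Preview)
Your overall strategy---using a Kalton--Peck twisted sum $0\to\ell_p\to Z_p\to\ell_p\to 0$---is exactly the example the paper uses. However, your logical framework contains a genuine error in the direction of the implications.

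You write that $\textsf{A}_p\Rightarrow\textsf{N}_p\Rightarrow\textsf{T}_p$, with $\textsf{T}_p$ the weakest. This is incorrect: by Theorem~\ref{containments}, the inclusions are $\textsf{T}_p\subsetneq\textsf{A}_p\subsetneq\textsf{N}_p$, so $\textsf{T}_p$ is the \emph{strongest} property and $\textsf{N}_p$ the weakest. Consequently, showing $X\notin\textsf{T}_p$ does \emph{not} propagate to $X\notin\textsf{A}_p$ or $X\notin\textsf{N}_p$; a space can perfectly well fail $\textsf{T}_p$ yet lie in $\textsf{A}_p$ or $\textsf{N}_p$. To handle all three properties with one example, you must do the opposite of what you propose: show that $Y$ and $X/Y$ lie in the \emph{strongest} class $\textsf{T}_p$ (which $\ell_p$ does), and that $X$ fails the \emph{weakest} class $\textsf{N}_p$.

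The paper accomplishes the latter by observing that $\textsf{N}_p$ implies the weak $p$-Banach--Saks property, and that $Z_p$ is known to fail weak $p$-Banach--Saks. Your sketch of a ``logarithmic blow-up along every branch'' for the flat sums $\|\sum_{i=1}^n x_{t|_i}\|$ is in fact aiming at precisely this: a lower bound of order $n^{1/p}\log n$ on such sums shows $\theta_n(X)/n^{1/p}\to\infty$, i.e., failure of $\textsf{N}_p$. So the computation you outline is essentially the right one, but you must recognize that it yields failure of $\textsf{N}_p$, not merely of $\textsf{T}_p$, and reverse your chain of deductions accordingly.
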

	
	Finally, and this is the main result of this section, we show
	
	\begin{thmAlfa} Asymptotic uniform flattenability (property $\textsf{\emph{T}}_\infty$) and summable Szlenk index (property $\textsf{\emph{A}}_\infty$) are three-space properties.
	\end{thmAlfa}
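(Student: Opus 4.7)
Let $Y$ be a closed subspace of $X$ with $Y$ and $X/Y$ both in the class in question (either $\textsf{T}_\infty$ or $\textsf{A}_\infty$). Since both properties are preserved under equivalent renorming, we may assume at the outset that optimal norms are in place on $Y$ and on $X/Y$. The plan is to use tree or Szlenk-derivation characterizations of these two properties (parallel to Theorem A for the finite-$p$ case) together with a standard dichotomy that reduces each level of a weakly null tree in $X$ to one controlled either by the quotient or by the subspace.

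The dichotomy runs as follows. Let $q\colon X\to X/Y$ denote the quotient map. For a weakly null sequence $(x_n)$ in $B_X$, after passing to a subsequence either $\inf_n \|q(x_n)\|_{X/Y}>0$, so that $(q(x_n))$ is weakly null in $X/Y$ with norms bounded below and the hypothesis on $X/Y$ applies directly; or $\|q(x_n)\|_{X/Y}\to 0$, in which case one chooses $y_n\in Y$ with $\|x_n-y_n\|\to 0$, and since $x_n\to 0$ weakly in $X$ and $x_n-y_n\to 0$ in norm, the sequence $(y_n)$ is weakly null in $Y$ and the hypothesis on $Y$ applies. Applying this dichotomy level by level to a weakly null tree in $X$, indexed by a sufficiently thin weak-neighborhood base, produces a subtree on which every branch is the concatenation of ``quotient levels'' and ``subspace levels''.

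For $\textsf{T}_\infty$, the hypotheses on $X/Y$ and on $Y$ each control the corresponding levels of a given branch by the asymptotic flatness estimate $\overline{\rho}(\sigma_0)=0$; combining the two contributions additively along a branch yields the tree characterization of asymptotic uniform flattenability for $X$, and hence the desired equivalent norm. For $\textsf{A}_\infty$, it is cleaner to argue dually: given $\varepsilon_1,\ldots,\varepsilon_n\geqslant 0$ with $s_{\varepsilon_n}\cdots s_{\varepsilon_1}(B_{X^*})\neq\varnothing$, split each $\varepsilon_i$ into a component realized along the restriction map $X^*\to Y^*$ and a component realized along $Y^\perp=(X/Y)^*$, and sum the two families separately using the summable Szlenk constants $c_Y$ and $c_{X/Y}$; this gives a bound of the form $\sum_i\varepsilon_i\leqslant C(c_Y+c_{X/Y})$.

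The principal obstacle is the quantitative control at the interfaces. When alternating between quotient and subspace regimes along a single branch, any multiplicative loss in the constants must be avoided, and, on the subspace side, the perturbations $x_n\mapsto y_n$ must be summable across levels so as not to accumulate uncontrollably. This is precisely where the $\textsf{T}_\infty$/$\textsf{A}_\infty$ setting is decisive: the bounds coming from $Y$ and $X/Y$ combine additively rather than multiplicatively, allowing the argument to close -- in sharp contrast with the finite-$p$ case, where the corresponding $3$SP statement fails by Theorem B.
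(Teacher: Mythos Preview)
Your proposal is a plausible outline, but it is not a proof; the crucial mechanisms are asserted rather than supplied, and the paper's arguments are quite different from what you sketch.

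\medskip

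\textbf{On $\textsf{T}_\infty$.} The paper does not use a tree dichotomy at all. It reduces to the separable case (via separable determination and Lemma~\ref{separable reduction}) and then invokes the known equivalence: a separable space is in $\textsf{T}_\infty$ if and only if it embeds linearly into $c_0$. The three-space statement then reduces to the Johnson--Zippin/Lindenstrauss result that ``isomorphic to a subspace of $c_0$'' is a 3SP. Concretely (see Proposition~\ref{stability} and the proposition following Theorem~\ref{AUF3SP}), one takes an embedding $T:Y\to c_0$, extends it to $S:X\to c_0$ by Sobczyk's theorem, and checks that $x\mapsto (Sx,Qx)$ embeds $X$ into $c_0\oplus X/Y$. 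Your branch-by-branch dichotomy is a different idea, and the step ``combining the two contributions additively along a branch'' is precisely the unproved assertion: if a branch mixes $Y$-levels and $X/Y$-levels, you have given no reason why $\|\sum a_i x_{t|_i}\|\le c\|a\|_\infty$ with $c$ independent of the number of alternations. This is exactly the interface problem you flag, and you have not resolved it.

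\medskip

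\textbf{On $\textsf{A}_\infty$.} Your idea of ``splitting each $\varepsilon_i$'' along the short exact sequence $0\to Y^\perp\to X^*\to X^*/Y^\perp\to 0$ is morally in the right direction, but Szlenk derivations do not factor through this sequence in any obvious way, and you give no mechanism. The paper's argument is considerably more elaborate: after separable reduction, it introduces ultrafilter quantities $\alpha_n^{\mathcal U}(X)$ (an upper-tree estimate) and $\beta_n^{\mathcal U}(X)$ (a dual lower-$\ell_1$ estimate for weak$^*$-null trees in $B_{X^*}$), shows they are comparable (Proposition~\ref{dual}), and then proves the key inequality
\[
\beta_n^{\mathcal U}(X)\le 5\max\{\beta_n^{\mathcal U}(Y),\beta_n^{\mathcal U}(X/Y)\}^2
\]
(Lemma~\ref{a lemma}). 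The proof takes a weak$^*$-null tree $(x^*_t)$ in $B_{X^*}$, builds a weak$^*$-null perturbation $(y^*_t)\subset Y^\perp$ with $\|x^*_t-y^*_t\|$ controlled by $\|x^*_t\|_{X^*/Y^\perp}$, and then runs a dichotomy on whether the $X^*/Y^\perp$-norms dominate or not. Note that the resulting bound is \emph{quadratic}, not additive; what makes $p=\infty$ special is not additivity but that the right-hand side involves only $Y$ and $X/Y$, with no self-reference to $X$ and no dependence on $n$ --- contrast Lemma~\ref{cdk2}, where $\theta_n(X)$ appears on the right and forces the logarithmic loss behind Theorem~\ref{cdk}. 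Your final paragraph misidentifies the reason the argument closes.

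\medskip

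In short: for $\textsf{T}_\infty$ you are missing the Sobczyk/embedding-into-$c_0$ idea entirely; for $\textsf{A}_\infty$ you have the right dual intuition but none of the machinery (the $\beta_n^{\mathcal U}$ quantity, the perturbation into $Y^\perp$, and the two-case estimate) that turns it into a proof.
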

	
	\smallskip 
	A net in a metric space $(M,d)$ is a subset $\mathcal M$ of $M$ such  that there exist $0<a<b$ so that for every $z\neq z'$ in $\cal M$,  $d(z,z')\ge a$ and for
	every $x$ in $M$, $d(x,\cal M)< b$. Let us use, for the simplicity of this introduction, that two infinite dimensional Banach spaces $X$ and $Y$ are coarse Lipschitz equivalent if and only if there exist two nets in $X$ and $Y$ that are Lipschitz equivalent. The precise definition of a coarse Lipschitz equivalence is given in Section 6 and can be roughly described as a Lipschitz equivalence at large distances. In \cite{GKL2000} and \cite{GKL2001}, it is proved that $\textsf{T}_p$ is stable under Lipschitz equivalences, $\textsf{P}_p$ is stable under coarse Lipschitz equivalences and $\textsf{A}_\infty=\textsf{N}_\infty$ is stable under coarse Lipschitz equivalences. In \cite{Kalton2013}, N. Kalton proved that for $1<p<\infty$, the class $\textsf{T}_p$ is not stable under coarse Lipschitz equivalences. Thanks to our renorming theorems from Section 3, we can almost complete this set of results.
	
	\begin{thmAlfa} 
		Let $p\in (1,\infty)$. Then, the class $\textsf{\emph{A}}_p$ and the class $\textsf{\emph{N}}_p$ are stable under coarse Lipschitz equivalences.
	\end{thmAlfa}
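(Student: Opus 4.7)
The plan is to leverage the renorming characterizations of $\textsf{A}_p$ and $\textsf{N}_p$ established in Section 3—specifically, for $\textsf{A}_p$, the equivalence of (i) and (iii) in Theorem A, and the analogous single-norm characterization alluded to in the introduction for $\textsf{N}_p$. Let $f : X \to Y$ be a coarse Lipschitz equivalence with quasi-inverse $g : Y \to X$, and assume $Y$ has the property under consideration. I aim to construct, for every $\tau \in (0,1]$, an equivalent norm on $X$ that is $M'$-equivalent to $\|\cdot\|_X$ and satisfies $\overline{\rho}(\sigma) \le C' \sigma^p$ for $\sigma \ge \tau$, with $M'$ and $C'$ depending only on $M, C$ and the coarse Lipschitz constants of $f, g$.

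The first ingredient is the standard rescaling trick: if $f$ is $L$-Lipschitz on pairs at distance $\ge d_0$, then $f_t(x) := f(tx)/t$ is $L$-Lipschitz at distances $\ge d_0/t$, so letting $t \to \infty$ pushes Lipschitz behavior down to every positive scale. Fix $\tau \in (0,1]$, pick the norm $|\cdot|_Y$ on $Y$ given by the renorming characterization at a matched scale $\tau' \asymp \tau$, and define on $X$
\[
|x|_X := \lim_{t \to \mathcal{U}} \frac{|f(tx) - f(0)|_Y}{t},
\]
where $\mathcal{U}$ is a free ultrafilter on $(0,\infty)$ along which $t \to \infty$. Positive homogeneity, subadditivity, and the two-sided control $(ML)^{-1}\|x\|_X \le |x|_X \le ML\|x\|_X$ follow routinely from the coarse Lipschitz estimates on $f$ and $g$ combined with the $M$-equivalence of $|\cdot|_Y$ to $\|\cdot\|_Y$.

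The delicate step, and the main obstacle, is transferring the smoothness bound $\overline{\rho}(\sigma) \le C\sigma^p$ from $|\cdot|_Y$ to $|\cdot|_X$ across the nonlinear map $f$. Because $\overline{\rho}$ is defined as an infimum over finite-codimensional subspaces, and $f$ is not linear, one cannot simply pull the witnessing subspaces back through $f$. I would instead route the argument through the equivalent weakly null tree characterization (Theorem A(ii)): given a weakly null tree $(x_s)$ in the unit ball of $(X, |\cdot|_X)$, push it through $f_t$ (with $t$ along $\mathcal{U}$) to obtain an analogous array in $Y$, then extract a subtree that is approximately weakly null in $Y$ and apply the upper $\ell_p$ tree estimate available in $(Y, |\cdot|_Y)$. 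This extraction is where the real technical work lies and must be carried out in the spirit of Kalton's treatment of upper tree estimates under nonlinear maps: one combines a Ramsey-style stabilization with the scale-invariant Lipschitz control provided by $f_t$ to guarantee that the images of the branches are weakly convergent up to arbitrarily small error. Once the upper $\ell_p$ tree estimate is verified on $X$ with constants independent of $\tau$, Theorem A returns the full family of renormings, yielding $X \in \textsf{A}_p$. The $\textsf{N}_p$ case proceeds identically, except that its renorming characterization produces a single equivalent norm with the power-type bound valid at all scales, so the $\tau$-parametrization is redundant and one ultrafilter limit suffices.
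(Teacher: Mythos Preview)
Your proposal has a genuine gap at the very first step. The function
\[
|x|_X := \lim_{t \to \mathcal{U}} \frac{|f(tx) - f(0)|_Y}{t}
\]
is \emph{not} a norm: coarse Lipschitz control gives the two-sided bound, but it gives neither subadditivity nor homogeneity. For the triangle inequality you would need $|f(t(x+y))-f(0)| \lesssim |f(tx)-f(0)| + |f(ty)-f(0)|$, and nothing of the sort follows from a distance estimate on $f$; for homogeneity, $f$ is not odd and the ultrafilter limit along $t$ is not the same as along $ct$. This construction yields at best an asymptotically homogeneous gauge, not an equivalent norm, so the renorming characterization cannot be invoked on it.

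The second gap is the ``delicate step'' you flag yourself. Pushing a weakly null tree through a nonlinear map and then ``extracting a subtree that is approximately weakly null in $Y$'' is exactly the obstacle, and a Ramsey stabilization alone does not overcome it: nothing about a coarse Lipschitz map forces images of weakly null nets to have weak cluster points at $0$. The known way around this is the Gorelik principle, which is the substantive content you are missing. The paper does not attempt your direct construction at all; it simply quotes a Gorelik-based result (Corollary~6.7 of Dalet--Lancien, extending \cite{GKL2001}) stating that for a coarse Lipschitz equivalence $f:X\to Y$ there exist a universal $K$ and a constant $M=M(f)$ such that for every $\ee>0$ one can renorm $Y$ with $\|\cdot\|_Y \le |\cdot| \le M\|\cdot\|_Y$ and $\overline{\rho}_{|\cdot|}(KM^2\sigma) \le \overline{\rho}_X(\sigma)+\ee$ for all $\sigma\in[0,1]$. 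Characterization~(iii) in Theorems~\ref{atheorem} and~\ref{ntheorem} is then visibly preserved. Finally, your description of the $\textsf{N}_p$ renorming characterization is inaccurate: item~(iii) of Theorem~\ref{ntheorem} is also a one-parameter family of norms (one norm per $\sigma$), not a single norm valid at all scales.
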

	Let us point that for $\textsf{N}_p$, this is deduced from already existing results, but was unnoticed,  while for $\textsf{A}_p$ it relies on our new renorming characterization. 
	
	\smallskip 
	In Section 7 we conclude this paper by gathering a few known examples of $\textsf{T}_\infty$ or $\textsf{A}_\infty$ spaces and related questions.

	\section{The properties}
	
	All Banach spaces are over the field $\mathbb{K}$, which is either $\mathbb{R}$ or $\mathbb{C}$.   We let $B_X$ (resp. $S_X$) denote the closed unit ball (resp. sphere) of $X$.    By \emph{subspace}, we shall always mean closed subspace.   Unless otherwise specified, all spaces are assumed to be infinite-dimensional. Throughout, we let $\textsf{Ban}$ denote the class of all Banach spaces over $\mathbb{K}$.  We let $\textsf{Sep}$ denote the class of separable members of $\textsf{Ban}$. 
	
	We now recall the definition of the Szlenk index, based on the Szlenk derivation introduced in Section 1.  For a Banach space $X$, $K\subset X^*$ weak$^*$-compact, and $\ee>0$, we  define the transfinite derivations 
	\[s_\ee^0(K)=K,\] \[s^{\xi+1}_\ee(K)=s_\ee(s^\xi_\ee(K)),\] and if $\xi$ is a limit ordinal, \[s^\xi_\ee(K)=\bigcap_{\zeta<\xi}s_\ee^\zeta(K).\]  
	For convenience, we let $s_0(K)=K$.     If there exists an ordinal $\xi$ such that $s^\xi_\ee(K)=\varnothing$, we let $Sz(K,\ee)$ denote the minimum such ordinal, and otherwise we write $Sz(K,\ee)=\infty$.   We let $Sz(K)=\sup_{\ee>0} Sz(K,\ee)$, where $Sz(K)=\infty$ if $Sz(K,\ee)=\infty$ for some $\ee>0$.   We let $Sz(X,\ee)=Sz(B_{X^*},\ee)$ and $Sz(X)=Sz(B_{X^*})$. In this work, we will exclusively be concerned with Banach spaces $X$ such that $Sz(X)\leqslant \omega$, where $\omega$ is the first infinite ordinal.   Since $Sz(X)=1$ if and only if $X$ has finite dimension, and otherwise $Sz(X)\geqslant \omega$, we will actually only be concerned with the case $Sz(X)=\omega$.  By compactness, $Sz(X)\leqslant \omega$ if and only if $Sz(X,\ee)$ is a natural number for each $\ee>0$.   We note that $Sz(X)<\infty$ if and only if $X$ is \emph{Asplund}. One characterization of Asplund spaces is that every separable subspace has a separable dual. 
	
	We recall that for any Banach space $X$ and $0<\ee, \delta<1$, 
	\[Sz(X, \ee\delta) \leqslant Sz(X, \ee)Sz(X, \delta).\] 
	From this it follows that if $Sz(X,\ee)$ is a natural number for each $\ee>0$, the Szlenk power type \[\textsf{p}(X):= \underset{\ee\to 0^+}{\lim} \frac{\log Sz(X,\ee)}{|\log(\ee)|}\] is finite.  It also holds that for any ordinal $\xi$, any $\ee>0$, and any natural number $n$, if $Sz(X,\ee)>\xi$, then $Sz(X,\frac{\ee}{n})> \xi n$.  Indeed, this follows from realizing \[B_{X^*} = \frac{1}{n}B_{X^*}+\ldots + \frac{1}{n}B_{X^*}\] and noting that the $\frac{\ee}{n}$-derivations act on one summand at a time in the same way that the $\ee$-derivations act on $B_{X^*}$.   Therefore the $\ee$-Szlenk index grows subgeometrically but superarithmetically.   The superarithmetic growth implies that for any infinite dimensional Banach space, $\textsf{p}(X)\geqslant 1$. 
	
	We define the following modulus. For $\eps>0$,
	$$\overline{\theta}^*_X(\eps)= \sup\{\delta \ge 0,\ s_\eps(B_{X^*})\subset (1-\delta)B_{X^*}\}.$$


	We recall again the definition of the \emph{modulus of asymptotic uniform smoothness} of $X$. If $X$ is infinite-dimensional, for $\sigma\geqslant 0$,  we define 
	\[\overline{\rho}_X(\sigma) = \underset{y\in B_X}{\sup}\ \underset{E\in \co(X)}{\inf}\  \underset{x\in B_E}{\sup} \|y+\sigma x\|-1,\] 
	where $\co(X)$ denotes the set of finite codimensional subspaces of $X$. For the sake of completeness, we define $\overline{\rho}_X(\sigma)=0$ for all $\sigma\geqslant 0$, when $X$ is finite-dimensional.    We note that 
	\[\overline{\rho}_X(\sigma) = \sup\{\underset{\lambda}{\lim\sup}\|y+\sigma x_\lambda\|-1:(x_\lambda)\subset B_X\text{\ is a weakly null net}\}.\]  
	It follows easily from the triangle inequality that $\overline{\rho}_X$ is a convex function. Since $\overline{\rho}_X(0)=0$, we deduce that $\sigma\mapsto \frac{\overline{\rho}_X(\sigma)}{\sigma}$ is non-decreasing on $(0,\infty)$.  Therefore \[\underset{\sigma>0}{\inf} \frac{\overline{\rho}_X(\sigma)}{\sigma} = \underset{\sigma\to 0^+}{\ \lim\sup\ } \frac{\overline{\rho}_X(\sigma)}{\sigma}.\]   
	We say $X$ is \emph{asymptotically uniformly smooth} (in short \emph{AUS}) if 
	$$\inf_{\sigma>0} \frac{\overline{\rho}_X(\sigma)}{\sigma}= 0.$$   
	We say $X$ is \emph{asymptotically uniformly smoothable} (\emph{AUS-able}) if $X$ admits an equivalent AUS norm.     For $1<p<\infty$, we say $X$ is $p$-\emph{asymptotically uniformly smooth} (in short $p$-\emph{AUS}) if 
	\[\sup_{\sigma>0} \frac{\overline{\rho}_X(\sigma)}{\sigma^p}<\infty.\]  
	We say $X$ is $p$-\emph{asymptotically uniformly smoothable} ($p$-\emph{AUS-able}) if $X$ admits an equivalent $p$-AUS norm.  We say $X$ is \emph{asymptotically uniformly flat} (\emph{AUF}) if there exists $\sigma_0>0$ such that $\overline{\rho}_X(\sigma_0)=0$.  We say $X$ is \emph{asymptotically uniformly flattenable} if $X$ admits an equivalent AUF norm. Of course, $p$-AUS spaces and AUF spaces are AUS spaces. 
	
	It is well known that the dual Young function of the modulus of asymptotic uniform smoothness is equivalent to the so-called modulus of weak$^*$ asymptotic uniform convexity $\overline{\delta}^*_X$ (see Proposition 2.1 and Corollary 2.3 in \cite{DKLR2017}). It is also clear that $\overline{\delta}^*_X$ is equivalent to $\overline{\theta}^*_X$. It will be more convenient for us to work with $\overline{\theta}^*_X$. We shall only need the following version of Proposition 2.1 in \cite{DKLR2017}.
	
	\begin{proposition}\label{moduliduality} 
		There exists a universal constant $C\ge 1$ such that for any Banach space $X$ and any $0<\sigma,\tau<1$,
		\begin{enumerate}
			\item If $\overline{\rho}_X(\sigma)<\sigma \tau$, then $\overline{\theta}^*_X(C\tau)\ge \sigma\tau$.
			\item If $\overline{\theta}^*_X(\tau)>\sigma \tau$, then $\overline{\rho}_X(\frac{\sigma}{C}) \le \sigma\tau$.
		\end{enumerate}
		
	\end{proposition}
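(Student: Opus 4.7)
The plan is to prove both parts via Hahn--Banach duality, translating between weakly null nets in $B_X$ that witness $\overline{\rho}_X$ and weak$^*$-null nets in $B_{X^*}$ that witness the Szlenk-derivation modulus $\overline{\theta}^*_X$. Both directions are quantitative forms of the classical Young duality between $\overline{\rho}_X$ and the weak$^*$-modulus of asymptotic uniform convexity $\overline{\delta}^*_X$, composed with the elementary equivalence $\overline{\delta}^*_X\simeq\overline{\theta}^*_X$; they are essentially the content of Proposition~2.1 and Corollary~2.3 of \cite{DKLR2017}, and I sketch each direction directly.

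For (1), assume $\overline{\rho}_X(\sigma)<\sigma\tau$ and fix $x^*\in s_{C\tau}(B_{X^*})$. Aiming at a contradiction, suppose $\|x^*\|>1-\sigma\tau$, choose $y\in B_X$ with $x^*(y)>1-\sigma\tau$, and pick $E\in\co(X)$ with $\sup_{x\in B_E}\|y+\sigma x\|<1+\sigma\tau$. The definition of $s_{C\tau}$ furnishes a net $y_\lambda^*\in B_{X^*}$ with $y_\lambda^*\stackrel{w^*}{\to}x^*$ and $\|g_\lambda^*\|\ge C\tau/2$, where $g_\lambda^*:=y_\lambda^*-x^*$. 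Pairing $y_\lambda^*$ against $y\pm\sigma x$ for $x\in B_E$, cancelling the lower bound $x^*(y)>1-\sigma\tau$, and using $g_\lambda^*(y)\to 0$ gives
$$\|(x^*+g_\lambda^*)|_E\|\le 2\tau+o(1).$$
A best-approximant $\psi_\lambda\in E^\perp$ is norm-bounded and, since $E^\perp$ is finite-dimensional, admits a norm-convergent subnet $\psi_\lambda\to\psi\in E^\perp$; weak$^*$-lower semicontinuity of the norm then yields $\|x^*-\psi\|\le 2\tau+o(1)$. Combining these two estimates by the triangle inequality forces $\|g_\lambda^*\|\le 4\tau+o(1)$, contradicting $\|g_\lambda^*\|\ge C\tau/2$ as soon as $C>8$.

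For (2), I use the Young duality in the reverse direction. From $\overline{\theta}^*_X(\tau)>\sigma\tau$ the equivalence gives $\overline{\delta}^*_X(\tau)\ge c\sigma\tau$ for a universal $c$; since $\overline{\delta}^*_X$ is convex with $\overline{\delta}^*_X(0)=0$, the map $\epsilon\mapsto\overline{\delta}^*_X(\epsilon)/\epsilon$ is non-decreasing, so $\overline{\delta}^*_X(\epsilon)\ge c\sigma\epsilon$ for every $\epsilon\ge\tau$. The Legendre-type upper bound $\overline{\rho}_X(\sigma')\le C_0\sup_{\epsilon>0}[\sigma'\epsilon-\overline{\delta}^*_X(\epsilon)]$ splits across $\{\epsilon\le\tau\}$ and $\{\epsilon\ge\tau\}$: over the first region the supremum is at most $\sigma'\tau$, while over the second it is non-positive whenever $\sigma'\le c\sigma$, i.e., $C\ge 1/c$. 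Taking a universal $C$ exceeding both $1/c$ and $C_0$ yields $\overline{\rho}_X(\sigma/C)\le\sigma\tau$.

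The main obstacle is to track the universal constants through both arguments so that a single $C$, independent of $\sigma$, $\tau$, and $X$, works in both directions; in particular, part (2) crucially exploits convexity of $\overline{\delta}^*_X$ to upgrade the spot hypothesis at a single argument $\tau$ into a uniform lower bound, while part (1) relies on the finite-dimensional approximation in $E^\perp$ to convert a restricted-norm bound into a bound on $\|g_\lambda^*\|$ itself.
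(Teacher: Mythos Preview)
The paper does not supply its own proof of this proposition; it simply records it as a version of Proposition~2.1 in \cite{DKLR2017}, combined with the elementary equivalence $\overline{\delta}^*_X\simeq\overline{\theta}^*_X$. So there is no in-paper argument to compare against beyond that citation.

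Your direct Hahn--Banach argument for part~(1) is correct and is essentially the approach in \cite{DKLR2017}: the extraction of the net $(y_\lambda^*)$ from $s_{C\tau}(B_{X^*})$, the bound $\|y_\lambda^*|_E\|\le 2\tau+o(1)$, and the finite-dimensional compactness in $E^\perp$ combine exactly as you describe to force $\|g_\lambda^*\|\le 4\tau+o(1)$, contradicting $\|g_\lambda^*\|\ge C\tau/2$ for $C>8$.

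For part~(2), however, there is a genuine gap. You assert that $\overline{\delta}^*_X$ is convex, so that $\epsilon\mapsto\overline{\delta}^*_X(\epsilon)/\epsilon$ is non-decreasing and the single-point hypothesis $\overline{\delta}^*_X(\tau)\ge c\sigma\tau$ propagates to $\overline{\delta}^*_X(\epsilon)\ge c\sigma\epsilon$ for all $\epsilon\ge\tau$. Unlike $\overline{\rho}_X$ (which \emph{is} convex, being a supremum of affine functions of $\sigma$), the modulus $\overline{\delta}^*_X$ is an infimum over $S_{X^*}$ and carries no convexity in general; nothing in \cite{DKLR2017} establishes such a property. Without it your Legendre splitting over $\{\epsilon\ge\tau\}$ does not close. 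Moreover, the ``Legendre-type upper bound'' $\overline{\rho}_X(\sigma')\le C_0\sup_{\epsilon>0}[\sigma'\epsilon-\overline{\delta}^*_X(\epsilon)]$ you invoke is itself equivalent to the pointwise duality you are proving, so it cannot be used as an independent input. The argument in \cite{DKLR2017} for this direction is instead the direct dual of your argument for~(1): assuming $\overline{\rho}_X(\sigma')>\sigma\tau$, one selects norming functionals $y_\lambda^*$ for $y+\sigma'x_\lambda$, passes to a weak$^*$-limit $x^*$, and reads off simultaneously $x^*\in s_\tau(B_{X^*})$ and a lower bound on $\|x^*\|$.
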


	For $1\leqslant q<\infty$, a Banach space $X$, and a sequence $(x_i)_{i=1}^\infty\subset X$, we define the (possibly infinite) quantity \[\|(x_i)_{i=1}^\infty\|_q^w = \sup\{\|(x^*(x_i))_{i=1}^\infty\|_{\ell_q}: x^*\in B_{X^*}\}.\]   We also define this quantity for finite sequences, \[\|(x_i)_{i=1}^n\|_q^w = \sup\{\|(x^*(x_i))_{i=1}^n\|_{\ell_q^n}: x^*\in B_{X^*}\}.\] 
	Note that, if $p\in (1,\infty]$ is the conjugate exponent of $q$, we have that 
	$$\|(x_i)_{i=1}^\infty\|_q^w = \inf\big\{c\in (0,\infty],\ \forall a=(a_i)_{i=1}^\infty \in \ell_p\ \|\sum_{i=1}^\infty a_ix_i\|\le c\|a\|_p\big\}.$$
	A similar formula is valid for $\|(x_i)_{i=1}^n\|_q^w$. 
	
	\medskip
	We next define four different two-players games on a Banach space $X$. Fix $1<p\leqslant \infty$ and let $1/p+1/q=1$.   For $c>0$ and $n\in\nn$, we define the $T(c,p)$ game on $X$, the $A(c,p,n)$ game, and the $N(c,p,n)$ game. Let $D$ be a weak neighborhood base at $0$ in $X$.  In the $T(c,p)$ game, Player I chooses a weak neighborhood $U_1\in D$, and Player II chooses $x_1\in U_1\cap B_X$. Player I chooses $U_2\in D$, and Player II chooses $x_2\in U_2\cap B_X$.  Play continues in this way until $(x_i)_{i=1}^\infty$ has been chosen. Player I wins if $\|(x_i)_{i=1}^\infty\|_q^w \leqslant c$, and Player II wins otherwise.  
	
	The $A(c,p,n)$ game is similar, except the game terminates after the $n^{th}$ turn. Player I wins if $\|(x_i)_{i=1}^n\|_q^w\leqslant c$, and Player II wins otherwise.  
	
	In the $N(c,p,n)$ game, as in the $A(c,p,n)$ game, the game terminates after the $n^{th}$ turn. Player I wins if $\Bigl\|\sum_{i=1}^n x_i\Bigr\|\leqslant c n^{1/p},$ and Player II wins otherwise. 
	
	Finally, in the $\Theta(c,n)$ game, Player I wins if $\Bigl\|\sum_{i=1}^n x_i\Bigr\|\leqslant c,$ and Player II wins otherwise. 
	
	It is known (see \cite{CauseyPositivity2018}, Section 3) that each of these games is determined. That is, in each game, either Player I or Player II has a winning strategy.  We let $\textsf{t}_p(X)$ denote the infimum of $c>0$ such that Player I has a winning strategy in the $T(c,p)$ game, provided such a $c$ exists, and we let $\textsf{t}_p(X)=\infty$ otherwise.  We let $\textsf{a}_{p,n}(X)$ denote the infimum of $c>0$ such that Player II has a winning strategy in the $A(c,p,n)$ game, and we let $\textsf{a}_p(X)=\sup_n \textsf{a}_{p,n}(X)$. We note that $\textsf{a}_p(X)$ is the infimum of $c>0$ such that for each $n\in\nn$, Player I has a winning strategy in the $A(c,p,n)$ game if such a $c$ exists, and $\textsf{a}_p(X)=\infty$ otherwise. We let $\theta_n(X)$ denote the infimum of $c>0$ such that Player I has a winning strategy in the $\Theta(c,n)$ game, noting that $\theta_n(X)\leqslant n$.  Finally, we let $\textsf{n}_{p,n}(X)= \theta_n(X)/n^{1/p}$ and $\textsf{n}_p(X)=\sup_n \textsf{n}_{p,n}(X)$, noting that $\textsf{n}_p(X)$ is the infimum of $c>0$ such that for each $n\in\nn$, Player I has a winning strategy in the $N(c,p,n)$ game, provided such a $c$ exists, and $\textsf{n}_p(X)=\infty$ otherwise. 
	
	\begin{remark}
		The existence of winning strategies, and therefore the constants associated with these games, do not depend upon the particular choice of the weak neighborhhod base $D$. Therefore in the case that $X$ has a separable dual, these constants are sequentially determined. Let us indicate the argument.
	\end{remark}
	\begin{proof} If $D_1, D_2$ are two weak neighborhood bases at $0$ in $X$, and Player I has a winning strategy in any of the games above when Player I is required to choose from $D_1$, then this winning strategy can be used to construct a winning strategy choosing from $D_2$ by choosing at each stage of the game any member of $D_2$ which is a subset of the member of $D_1$ indicated by the winning strategy. From this it follows that  the values of the associated constants also do not depend on $D$.  In particular, in the case that $X^*$ is separable, we can use a fixed countable, linearly ordered weak neighborhood base $D$.
	\end{proof}
	
	Let $D^{\leqslant n}=\cup_{i=1}^n D^i$. Let $D^{<\omega}=\cup_{i=1}^\infty D^i$, and let $D^\omega$ denote the set of all infinite sequences whose members lie in $D$. Let $D^{\leqslant \omega}=D^{<\omega}\cup D^\omega$.  For $s,t\in D^{<\omega}$, we let $s\smallfrown t$ denote the concatenation of $s$ with $t$. We let $|t|$ denote the length of $t$.  For $0\leqslant i\leqslant|t|$, we let $t|_i$ denote the initial segment of $t$ having length $i$, where $t|_0=\varnothing$ is the empty sequence.  If $s\in \{\varnothing\}\cup D^{<\omega}$, we let $s\prec t$ denote the relation that $s$ is a proper initial segment of $t$.
	
	We say a function $\varphi:D^{<\omega}\to D^{<\omega}$ is a \emph{pruning} provided that \begin{enumerate}[(i)]\item $|\varphi(t)|=|t|$ for all $t\in D^{\leqslant n}$, \item if $s\prec t$, then $\varphi(s)\prec \varphi(t)$, \item if $\varphi((U_1, \ldots, U_k))=(V_1, \ldots, V_k)$, then $V_k\subset U_k$. \end{enumerate} We define prunings $\varphi:D^{\leqslant n}\to D^{\leqslant n}$ similarly.  
	

	Given $D$ a weak neighborhood base of $0$ in $X$ and $(x_t)_{t\in D^{<\omega}}\subset X$, we say $(x_t)_{t\in D^{<\omega}}$ is 
	\begin{enumerate}[(i)]
		\item \emph{weakly null of type} I provided that for each $t=(U_1, \ldots, U_k)$, $x_t\in U_k$, 
		\item \emph{weakly null of type} II provided that for each $t\in \{\varnothing\}\cup D^{<\omega}$,\\  $(x_{t\smallfrown (U)})_{U\in D}$ is a weakly null net. Here $D$ is directed by reverse inclusion. 
	\end{enumerate} 
	The notions of weakly null of types I and II for collections indexed by $D^{\leqslant n}$ are defined similarly. Note that a weakly null collection of type I is weakly null of type II. We now link these notions with our various games.
	
	\begin{proposition} Let $X$ be a Banach space, let $p\in (1,\infty]$, and $c>0$. Then, 
		Player II has a winning strategy in the $T(c,p)$ game on $X$ if and only if there exists a collection $(x_t)_{t\in D^{<\omega}}\subset B_X$ such that 
		\begin{enumerate}[(a)]
			\item $(x_t)_{t\in D^{<\omega}}$ is weakly null of type I, and 
			\item for each $\tau\in D^\omega$, $\|(x_{\tau|_i})_{i=1}^\infty\|_q^w > c$. 
		\end{enumerate}
	\end{proposition}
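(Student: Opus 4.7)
The plan is to observe that this proposition is essentially a translation between the language of games and the language of trees, so the proof reduces to carefully unpacking definitions and checking that Player II's strategies for the $T(c,p)$ game are in bijective correspondence with collections $(x_t)_{t \in D^{<\omega}} \subset B_X$ that are weakly null of type I.

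For the forward direction, suppose Player II has a winning strategy $\sigma$ in the $T(c,p)$ game. Since Player II's moves are completely determined by Player I's moves through $\sigma$, I may encode this strategy as a function assigning to each finite sequence $t = (U_1,\ldots,U_k) \in D^{<\omega}$ of Player I's moves a point $x_t \in B_X$ giving Player II's response at stage $k$. The rules of the game require $x_t \in U_k \cap B_X$, so the collection $(x_t)_{t \in D^{<\omega}}$ is by definition weakly null of type I, establishing (a). If $\tau = (U_1, U_2, \ldots) \in D^\omega$ is any complete play of Player I, the resulting sequence of Player II's moves is precisely $(x_{\tau|_i})_{i=1}^\infty$; the fact that $\sigma$ is winning against every play of Player I then gives $\|(x_{\tau|_i})_{i=1}^\infty\|_q^w > c$ for every $\tau \in D^\omega$, which is (b).

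Conversely, given a family $(x_t)_{t\in D^{<\omega}} \subset B_X$ satisfying (a) and (b), define Player II's strategy by: whenever the sequence of Player I's moves so far is $t = (U_1,\ldots,U_k)$, Player II responds with $x_t$. Condition (a) ensures $x_t \in U_k \cap B_X$, so this prescription is a legal strategy. For any play of the game Player I's moves form some $\tau \in D^\omega$, and by construction Player II's resulting sequence is $(x_{\tau|_i})_{i=1}^\infty$; condition (b) then yields $\|(x_{\tau|_i})_{i=1}^\infty\|_q^w > c$, so Player II wins, confirming that the strategy is winning.

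The main subtlety, and the only point that requires any thought beyond definition-chasing, is the observation that in this setting a strategy for Player II may be represented without loss of generality as depending only on Player I's history: since Player II's moves are themselves determined by the strategy, the full history is recoverable from Player I's moves alone. Once this identification is made explicit, indexing the responses by $t \in D^{<\omega}$ as in the definition of a type I weakly null collection is natural and the rest of the argument is purely formal.
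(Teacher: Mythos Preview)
Your proof is correct and follows essentially the same approach as the paper's: both directions are handled by the natural dictionary between Player II strategies and type I weakly null collections indexed by $D^{<\omega}$, with the inductive/recursive definition of $x_t$ from the strategy and vice versa. Your explicit remark that a Player II strategy may be taken to depend only on Player I's history is exactly what the paper encodes by its inductive definition of $x_{(U_1,\ldots,U_k)}$.
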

	
	\begin{proof} First, if such a collection exists, we can use it to define a winning strategy for Player II in the $T(c,p)$ game. When Player I chooses $U_1\in D$, then Player II chooses $x_{(U_1)}$. Player I chooses $U_2\in D$, to which Player II's response is $x_{(U_1, U_2)}$. Play continues in this way, and the result is $(x_{\tau|_i})_{i=1}^\infty$ for some $\tau\in D^\omega$, which satisfies $\|(x_{\tau|_i})_{i=1}^\infty\|_q^w>c$.  On the other hand, if Player II has a winning strategy in the $T(c,p)$ game, we define by induction on $k$ the vector $x_{(U_1, \ldots, U_k)}$ to be Player II's response according to this winning strategy following the choices $U_1, x_{(U_1)}, U_2, x_{(U_1, U_2)}, \ldots, U_k$ in the $T(c,p)$ game.   It follows from the rules of the game that (a) is satisfied, and it follows from the fact that Player II plays according to a winning strategy that (b) is satisfied.
	\end{proof}
	
	Analogous statements about $(x_t)_{t\in D^{\leqslant n}}\subset B_X$ can be made for the $A(c,p,n)$, $N(c,p,n)$, and $\Theta(c,n)$ games.   We also have
	
	\begin{proposition} Let $X$ be a Banach space, let $p\in (1,\infty]$, and $c>0$. Then, Player II has a winning strategy in the $T(c,p)$ game if and only if there exists a collection $(x_t)_{t\in D^{<\omega}}\subset B_X$ such that \begin{enumerate}[(a)]
			\item $(x_t)_{t\in D^{<\omega}}$ is weakly null of type II, and 
			\item for each $\tau\in D^\omega$, $\|(x_{\tau|_i})_{i=1}^\infty\|_q^w > c$. 
		\end{enumerate}
	\end{proposition}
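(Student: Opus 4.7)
The plan is to reduce the proposition to the preceding one (the type I characterization) by a pruning argument. The forward implication is immediate: if Player II has a winning strategy in the $T(c,p)$ game, then the previous proposition produces a collection $(x_t)_{t\in D^{<\omega}}\subset B_X$ which is weakly null of type I and satisfies (b). Since every collection weakly null of type I is also weakly null of type II (as already noted in the paper), this same collection witnesses (a) and (b).

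For the converse, suppose we are given a type II collection $(x_t)_{t\in D^{<\omega}}\subset B_X$ with $\|(x_{\tau|_i})_{i=1}^\infty\|_q^w>c$ for every $\tau\in D^\omega$. My strategy is to construct a pruning $\varphi:D^{<\omega}\to D^{<\omega}$ such that $y_t:=x_{\varphi(t)}$ defines a type I collection in $B_X$ that still verifies (b); invoking the previous proposition on $(y_t)$ then produces the desired winning strategy. I would build $\varphi$ by induction on $|t|$. Having defined $\varphi$ on $D^{\leqslant k}$ compatibly with the pruning requirements, fix $t\in D^k$ with $\varphi(t)=s=(V_1,\ldots,V_k)$; for each $U\in D$, the net $(x_{s\smallfrown(V)})_{V\in D}$ is weakly null, so there exists $V_0\in D$ with $x_{s\smallfrown(V)}\in U$ whenever $V\subset V_0$. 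Pick any $V_{k+1}\in D$ with $V_{k+1}\subset U\cap V_0$ and set $\varphi(t\smallfrown(U))=s\smallfrown(V_{k+1})$. By construction $V_{k+1}\subset U$, so $\varphi$ remains a pruning, and $y_{t\smallfrown(U)}=x_{s\smallfrown(V_{k+1})}\in U$, which is exactly the type I condition at the new level.

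To verify (b) for the pruned tree, observe that any $\tau\in D^\omega$ determines, by gluing the initial segments $\varphi(\tau|_n)$ (which are consistent across $n$ thanks to the initial-segment preservation built into the definition of a pruning), a branch $\sigma\in D^\omega$ satisfying $(y_{\tau|_i})_{i=1}^\infty=(x_{\sigma|_i})_{i=1}^\infty$. Hence $\|(y_{\tau|_i})_{i=1}^\infty\|_q^w=\|(x_{\sigma|_i})_{i=1}^\infty\|_q^w>c$ by the assumption on the original tree. The only mildly delicate step is guaranteeing that the pruning is coherent across levels, and this is handled cleanly by the inductive definition; the weak nullity of type II is precisely the input needed to force each chosen vector into the prescribed neighborhood $U$ while simultaneously maintaining $V_{k+1}\subset U$. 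I do not foresee any substantial obstacle beyond orchestrating this induction.
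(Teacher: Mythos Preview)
Your proof is correct and follows the same approach as the paper: reduce to the type~I characterization via a pruning that turns a type~II tree into a type~I tree while preserving property~(b). The paper merely asserts the existence of such a pruning, whereas you have supplied the explicit inductive construction.
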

	
	\begin{proof} Since any collection which is weakly null of type I is also weakly null of type II, by the previous proposition,  it is sufficient to note that if $(x_t)_{t\in D^{<\omega}}\subset B_X$ is weakly null of type II, then there exists a pruning $\varphi:D^{<\omega}\to D^{<\omega}$ such that $(x_{\varphi(t)})_{t\in D^{<\omega}}\subset B_X$ is weakly null of type I. Moreover, property (b) is retained by the collection $(x_{\varphi(t)})_{t\in D^{<\omega}}$.
	\end{proof}
	
	Again, analogous statements hold for collections indexed by $D^{\leqslant n}$ and the games $A(c,p,n)$, $N(c,p,n)$, and $\Theta(c,n)$. Unless otherwise specified, by a weakly null collection $(x_t)_{t\in D^{<\omega}}$ in $X$, we shall mean weakly null of type II. However, it might be convenient to use that we may assume it to be weakly null of type I. 
	
	\begin{remark} As we already mentioned, in the case that $X^*$ is separable, we can use a fixed countable, linearly ordered weak neighborhood base $D$ and, by identifying $D$ with $\nn$, characterize the constants $\textsf{t}_p(X), \textsf{a}_{p,n}(X), \textsf{a}_p(X),$ $ \theta_n(X), \textsf{n}_{p,n}(X), \textsf{n}_p(X)$ using trees indexed by $\nn^{<\omega}$ or $\nn^{\leqslant n}$ rather than $D^{<\omega}$ or $D^{\leqslant n}$. Now, if $X$ is separable and there exists some $1<p\leqslant \infty$ such that any of the constants $\textsf{t}_p(X), \textsf{a}_p(X), \textsf{n}_p(X)$, then $Sz(X)\leqslant \omega$, $X$ is Asplund, and $X^*$ is separable. However, we can only use $\nn^{<\omega}$ (resp. $\nn^{\leqslant n}$) in place of $D^{<\omega}$ (resp. $D^{\leqslant n}$) as index sets to compute the values of these constants  once we already know that $X^*$ is separable, because of examples like $\ell_1$ with the Schur property.  So, for example, once we know $\textsf{p}(X)$ is finite, we can characterize its value using trees indexed by $\nn^{<\omega}$, but we cannot use trees indexed by $\nn^{<\omega}$ to determine whether $\textsf{t}_p(X)$ is finite.	
	\end{remark}
	
	We conclude this section with elementary statements that we shall use to stabilize weakly null trees. Note that the operation described below is actually a pruning.
	
	\begin{proposition}\label{prune1}  Let $(D,\leqslant_D)$ be any directed set, $F$ a finite set, $n$ a natural number, and $f:D^n\to F$ a function.  There exists $\theta:D^{\leqslant n}\to D^{\leqslant n}$ preserving lengths and initial segments such that 
		\begin{enumerate}[(i)]
			\item if $\theta((U_1, \ldots, U_k))=(V_1, \ldots, V_k)$, then $U_k\leqslant_D V_k$, 
			\item $f\circ \theta|_{D^n}$ is constant. 
		\end{enumerate}
	\end{proposition}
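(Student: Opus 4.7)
The plan is to prove the proposition by induction on $n$, with the base case resting on a simple observation about directed sets. The key lemma I will establish first is: if $g:D\to F$ is any function with $F$ finite and $D$ directed, then there exists $f_0\in F$ whose preimage $D_0:=g^{-1}(f_0)$ is cofinal in $D$, meaning for each $U\in D$ there is some $V\in D_0$ with $V\geqslant_D U$. To see this, suppose not. Then for each $f\in F$ we can pick $U_f\in D$ such that no $V\geqslant_D U_f$ satisfies $g(V)=f$. Since $F$ is finite and $D$ directed, we can pick a common upper bound $U$ of $\{U_f\}_{f\in F}$, and then $g(U)$ must equal some $f\in F$ while $U\geqslant_D U_f$, a contradiction.

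For the base case $n=1$, apply this observation to $f:D\to F$ to obtain $f_0\in F$ and a cofinal $D_0\subset f^{-1}(f_0)$. For each $U_1\in D$ choose, by cofinality, some $\theta(U_1)\in D_0$ with $\theta(U_1)\geqslant_D U_1$. Conditions (i) and (ii) are then immediate, and length/initial-segment preservation are trivial since $D^{\leqslant 1}=D$.

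For the inductive step, assume the result for $n$ and let $f:D^{n+1}\to F$. For every fixed $U\in D$ define $f_U:D^n\to F$ by $f_U(U_2,\ldots,U_{n+1})=f(U,U_2,\ldots,U_{n+1})$. By the inductive hypothesis, there exist a pruning $\theta_U:D^{\leqslant n}\to D^{\leqslant n}$ with the required properties and some value $g(U)\in F$ such that $f_U\circ\theta_U$ is constant with value $g(U)$ on $D^n$. Apply the base case to $g:D\to F$ to obtain $\theta_1:D\to D$ with $\theta_1(U)\geqslant_D U$ and $g\circ\theta_1\equiv f_0$ for some fixed $f_0\in F$. Now define $\theta:D^{\leqslant n+1}\to D^{\leqslant n+1}$ by $\theta(U_1)=\theta_1(U_1)$ and, for $k\geqslant 2$, by prepending $\theta_1(U_1)$ to $\theta_{\theta_1(U_1)}(U_2,\ldots,U_k)$.

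It is then routine to verify the three required properties. Length preservation and initial-segment preservation follow from the same properties for $\theta_1$ and for each $\theta_U$, together with the fact that the first coordinate $\theta_1(U_1)$ only depends on $U_1$. Condition (i) for the last coordinate holds when $k=1$ by construction of $\theta_1$ and when $k\geqslant 2$ by the inductive hypothesis applied to $\theta_{\theta_1(U_1)}$. Finally, for any $(U_1,\ldots,U_{n+1})\in D^{n+1}$,
\[ f(\theta(U_1,\ldots,U_{n+1}))=f_{\theta_1(U_1)}(\theta_{\theta_1(U_1)}(U_2,\ldots,U_{n+1}))=g(\theta_1(U_1))=f_0, \]
proving (ii). I do not foresee a serious obstacle; the only delicate point is formulating the cofinality lemma cleanly enough that the inductive combination of $\theta_1$ with the family $\{\theta_U\}_{U\in D}$ automatically preserves initial segments.
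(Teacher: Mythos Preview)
Your proof is correct and follows essentially the same approach as the paper's: induction on $n$, with the base case using the pigeonhole/cofinality observation that some fiber $g^{-1}(f_0)$ is cofinal in a directed set, and the inductive step obtained by applying the hypothesis to each $f_U$ and then stabilizing the resulting values $g(U)$ via the base case. Your argument is slightly more explicit about why some fiber must be cofinal (the paper simply asserts it), but otherwise the structure and the definition of $\theta$ coincide.
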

	
	\begin{proof} We work by induction. For $x\in F$, let $I_x=\{U\in D:f((U))=x\}$. Since $\cup_{x\in F}I_x=D$ and $F$ is finite, there exists some $x\in F$ such that $I_x$ is cofinal in $D$. This means that for any $U\in D$, there exists $V_U\in I_x$ such that $U\leqslant_D V_U$. Define $\theta((U))=(V_U)$ and note that $f\circ \theta|_{D^1}\equiv x$. 
		
		Next, assume the result holds for some $n$ and fix $f:D^{n+1}\to F$.  For each $U\in D$, define $f_U:D^n\to F$ by $f_U((U_1, \ldots, U_n))=f((U, U_1, \ldots, U_n))$.  By the inductive hypothesis, there exist $\theta_U:D^{\leqslant n}\to D^{\leqslant n}$ which preserves lengths and initial segments and satisfying $(i)$ and $(ii)$. Fix $x_U\in F$ such that $f_U\circ \theta_U|_{D^n}\equiv x_U$.   Define $g:D^1\to F$ by $g((U))=x_U$. By the base case, there exists $\phi:D^1\to D^1$ satisfying $(i)$ and $(ii)$ with $f$ replaced by $g$.  Define $\theta:D^{\leqslant n+1}\to D^{\leqslant n+1}$ by $\theta((U))=\phi(U)$ and $\theta((U, U_1, \ldots, U_k)) = \phi(U)\smallfrown \theta_{\phi(U)}(U_1, \ldots, U_k)$. 
		
	\end{proof}
	
	\begin{corollary}\label{prune2} Let $(D,\leqslant_D)$ be any directed set, $(K,d)$ a totally bounded metric space, $n$ a natural number, and $f:D^n\to K$ a function.  For any $\ee>0$, there exist $\theta:D^{\leqslant n}\to D^{\leqslant n}$ preserving lengths and initial segments and a subset $B$ of $K$ of diameter less than $\ee$ such that 
		\begin{enumerate}[(i)]
			\item if $\theta((U_1, \ldots, U_k))=(V_1, \ldots, V_k)$, then $U_k\leqslant_D V_k$, 
			\item $f(\theta(t))\in B$ for all $t\in D^n$. 
		\end{enumerate}
	\end{corollary}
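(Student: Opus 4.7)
The plan is to reduce Corollary~\ref{prune2} to Proposition~\ref{prune1} by discretizing the totally bounded space $K$. Since $(K,d)$ is totally bounded, for the given $\varepsilon>0$ we can cover $K$ by a finite collection $B_1,\ldots,B_N$ of (say, open) balls of radius $\varepsilon/3$, each of which has diameter less than $\varepsilon$. Let $F=\{1,\ldots,N\}$.

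Next I would convert $f:D^n\to K$ into a discrete labeling. Using the axiom of choice, pick for each $t\in D^n$ some index $h(t)\in F$ with $f(t)\in B_{h(t)}$, which defines a function $h:D^n\to F$. Now apply Proposition~\ref{prune1} to $h$: this yields a map $\theta:D^{\leqslant n}\to D^{\leqslant n}$ preserving lengths and initial segments, satisfying condition (i) of that proposition (hence condition (i) of Corollary~\ref{prune2}), and such that $h\circ\theta|_{D^n}$ is constant, say identically equal to some $i_0\in F$. Setting $B=B_{i_0}$, we have $\diam(B)<\varepsilon$ and $f(\theta(t))\in B$ for every $t\in D^n$, which gives condition (ii).

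There is essentially no obstacle here: the only conceptual content is the passage from a totally bounded metric target to a finite target, which is immediate from the definition of total boundedness. Everything else is imported wholesale from Proposition~\ref{prune1}. One minor bookkeeping point worth flagging is that the pruning $\theta$ produced by the proposition automatically preserves lengths and initial segments, so properties (i) and (ii) of the corollary follow without any further modification of $\theta$.
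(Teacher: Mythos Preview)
Your proof is correct and essentially identical to the paper's: both cover $K$ by finitely many sets of diameter less than $\varepsilon$, define a labeling $D^n\to\{1,\ldots,N\}$, and invoke Proposition~\ref{prune1}. The only cosmetic difference is that the paper avoids your appeal to choice by setting $g(t)=\min\{i\leqslant m: f(t)\in B_i\}$.
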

	
	\begin{proof} Let $B_1, \ldots, B_m$ be a cover of $K$ by sets of diameter less than $\ee$. Define $g:D^n\to \{1, \ldots, m\}$ by letting $g(t)=\min \{i\leqslant m: f(t)\in B_i\}$.  Apply Proposition \ref{prune1} to $g$. 
		
	\end{proof}
	
	


	\section{The properties and their relations}\label{relations}
	
	For $1<p\leqslant \infty$, we let $\textsf{T}_p$ denote the class of Banach spaces $X$ such that $\textsf{t}_p(X)<\infty$.   The classes $\textsf{A}_p$ and $\textsf{N}_p$ are defined similarly using $\textsf{a}_p$ and $\textsf{n}_p$.  We let $\textsf{P}_p=\bigcap_{1<r<p}\textsf{T}_r$.  We let $\textsf{D}_1$ denote the class of all Banach spaces the Szlenk index of which does not exceed $\omega$. We now present the following alternative descriptions of each class. We have chosen to quickly indicate the easy arguments, to give precise references for others and to detail the new ones. We give this overview, insisting on the renorming characterizations, as they are crucial for our non-linear applications. 
	
	\medskip We start with the description of $\textsf{{T}}_p$. The next theorem is the main result from \cite{CauseyPositivity2018}. We briefly explain the easy implications and emphasize the key part of the proof. 
	
	\begin{theorem}\label{ttheorem} Fix $1<p\leqslant \infty$ and let $q$ be conjugate to $p$. Let $X$ be a Banach space. The following are equivalent  
		\begin{enumerate}[(i)]
			\item $X\in \textsf{\emph{T}}_p$. 
			\item There exists a constant $c>0$ such that for any weak neighborhood base $D$ at $0$ in $X$ and any weakly null $(x_t)_{t\in D^{<\omega}}\subset B_X$, there exists $\tau\in D^\omega$ such that $\|(x_{\tau|_i})_{i=1}^\infty\|_q^w \leqslant c$. 
			\item $X$ is $p$-AUS-able (resp. AUF-able if $p=\infty$). 
			\item There exist an equivalent norm $|\cdot|$ on $X$ and $c>0$ such that for each $\ee \in [0,1]$, $s_\ee(B_{X^*}^{|\cdot|})\subset (1-c\ee^q)B_{X^*}^{|\cdot|}$. In other words, $\overline{\theta}^*_{|\cdot|}(\eps)\ge c\eps^q$
		\end{enumerate}
	\end{theorem}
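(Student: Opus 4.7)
The plan is to establish the four equivalences via the cycle $(i) \Leftrightarrow (ii) \Leftrightarrow (iii) \Leftrightarrow (iv)$. The two ``soft'' equivalences reduce to already-established tools. First, $(i) \Leftrightarrow (ii)$ follows from determinacy of the $T(c,p)$ game combined with the game--to--tree dictionary of Section~2: Player II has a winning strategy in $T(c,p)$ exactly when there is a weakly null tree in $B_X$ (of type I, equivalently of type II after pruning) every branch of which has $\|(x_{\tau|_i})_{i=1}^\infty\|_q^w > c$, so by determinacy $X\in \textsf{T}_p$ if and only if the negation holds, which is exactly (ii). Second, $(iii) \Leftrightarrow (iv)$ is obtained by applying Proposition~\ref{moduliduality} to the renormed space: if $\overline{\rho}_{|\cdot|}(\sigma) \leqslant C\sigma^p$, choosing $\tau$ proportional to $\sigma^{p-1}$ makes $\overline{\rho}_{|\cdot|}(\sigma) < \sigma\tau$, and part~(1) yields a lower bound on $\overline{\theta}^*_{|\cdot|}$; the algebraic identity $q(p-1)=p$ turns this into $\overline{\theta}^*_{|\cdot|}(\eps) \geqslant c\eps^q$. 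The converse direction is symmetric via part~(2), and the case $p=\infty$ (AUF versus $\overline{\theta}^*$ positive somewhere) is analogous with the corresponding degenerate estimates.

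For $(iii) \Rightarrow (ii)$, suppose $|\cdot|$ is an equivalent $p$-AUS norm with $\overline{\rho}_{|\cdot|}(\sigma) \leqslant C\sigma^p$. Given a weakly null tree $(x_t)_{t \in D^{\leqslant n}} \subset B_X$ and $(a_i)_{i=1}^n \in B_{\ell_p^n}$, I would prune inductively along a branch $\tau$ so that the partial sums $y_k = \sum_{i=1}^k a_i x_{\tau|_i}$ satisfy a stepwise increment of the form $|y_k|^p \leqslant |y_{k-1}|^p + C' |a_k|^p$, obtained by rescaling the AUS estimate $\limsup_\lambda |y_{k-1} + a_k z_\lambda| \leqslant |y_{k-1}| + C |y_{k-1}|^{1-p} |a_k|^p$ along the weakly null net indexed at node $\tau|_{k-1}$. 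Iterating and taking the supremum over coefficient sequences gives the weak $\ell_q$ branch estimate. Since the pruning must accommodate all sequences simultaneously, one first stabilizes via Corollary~\ref{prune2} applied to a finite $\eps$-net of $B_{\ell_p^n}$.

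The main obstacle is the renorming direction $(ii) \Rightarrow (iii)$, which is the technical core of \cite{CauseyPositivity2018}. The plan is an envelope construction: define a new norm $|\cdot|$ on $X$ by taking a supremum of $\|\cdot\|$-norms of the form $\|x + \sum_{i=1}^n a_i y_{\tau|_i}\|$ (or an $\ell_p$-sum thereof) over admissible weakly null trees $(y_t) \subset B_X$, branches $\tau$, and coefficients $(a_i) \in B_{\ell_p}$. The bound $\textsf{t}_p(X)<\infty$ furnished by (ii) ensures $|\cdot|$ is finite and bi-Lipschitz equivalent to $\|\cdot\|$; symmetry, homogeneity, and the triangle inequality are built in. To verify $\overline{\rho}_{|\cdot|}(\sigma) \leqslant C\sigma^p$, given $y$ with $|y| \leqslant 1$ nearly attained by some tree $(y_t)$ with coefficients $a$, and a weakly null net $(x_\lambda) \subset B_X^{|\cdot|}$, the key move is to concatenate the witnessing tree of $|y|$ with $\sigma x_\lambda$ at the root, producing a new admissible tree whose associated $\ell_p$-budget is $(\|a\|_p^p + \sigma^p)^{1/p} \leqslant (1+\sigma^p)^{1/p}$; applying (ii) to this combined object yields the desired bound. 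The delicate issue is that $(x_\lambda)$ is weakly null in the $|\cdot|$-ball rather than in the $\|\cdot\|$-ball, so one must pass through a prunable tree representation, and the coefficients $a$ depend on $y$ rather than being fixed. Both subtleties are absorbed by the stabilization Corollary~\ref{prune2} together with a standard compactness argument on $B_{\ell_p}$.
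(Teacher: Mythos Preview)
Your handling of the two soft equivalences matches the paper exactly. For $(iii)\Rightarrow(i)/(ii)$, the paper takes the game route rather than the tree-pruning route: after noting that $p$-AUS is equivalent to the uniform estimate $\|x+\sigma y\|^p \leqslant \|x\|^p + C^p\sigma^p + \ee$ for $y$ in a suitable weak neighborhood, it observes that this inequality extends to all $x$ in a norm-compact set $G$ (by a finite net), and then lets Player I choose $U_{n+1}$ so that the inequality holds for every $x\in G=\{\sum_{i\leqslant n} b_i x_i: (b_i)\in B_{\ell_p^n}\}$. This handles all coefficient sequences in one stroke and produces an infinite branch directly, so neither the stabilization via Corollary~\ref{prune2} on a net in $B_{\ell_p^n}$ nor a separate passage from finite to infinite branches is needed. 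Your stepwise estimate is morally the same, but the paper's packaging avoids the bookkeeping you flag.

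The genuine gap is in your sketch of $(ii)\Rightarrow(iii)$. The envelope functional you describe---a $\sup$ over trees, an $\inf$ over branches, and a $\sup$ over coefficients---does \emph{not} have the triangle inequality ``built in''; a $\sup$-$\inf$-$\sup$ quantity is almost never subadditive. In the actual construction (carried out in \cite{CauseyPositivity2018}, and visible in the analogous proof of Theorem~\ref{atheorem}$(ii)\Rightarrow(iii)$ in this paper) one defines positively homogeneous functionals $f_n(x)$ of this shape, proves the key concatenation inequality $\limsup_U f_n(x+\sigma x_U)^p \leqslant f_{n+1}(x)^p + \sigma^p$, then \emph{averages} the $f_n^p$ to obtain $g_N$, and finally \emph{convexifies} via $|x|_N = \inf\{\sum g_N(x_i): x=\sum x_i\}$ to force the triangle inequality. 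The AUS estimate for $|\cdot|_N$ is then recovered from the concatenation inequality by passing through the convex-hull description of $B^{|\cdot|_N}_X$. Your ``concatenate at the root'' idea is exactly the mechanism behind the key inequality, but without the averaging and convexification layers you do not get a norm, and your remark that the delicate point is the distinction between the $|\cdot|$-ball and the $\|\cdot\|$-ball is a red herring (the balls differ only by a fixed dilation). The paper itself does not reproduce this construction for $\textsf{T}_p$, instead closing the cycle via $(i)\Rightarrow(iv)$ by direct reference to \cite{CauseyPositivity2018}.
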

	
	\begin{proof} The equivalence between  $(i)$ and $(ii)$ follows immediately from our discussions on winning strategies in the $T(c,p)$ game.  More precisely, $\textsf{t}_p(X)$ is the infimum of those $c$ for which $(ii)$ holds. 

		\smallskip
		The equivalence between $(iii)$ and $(iv)$ is a immediate consequence of the duality Proposition \ref{moduliduality}.
		
		\smallskip
		We now detail the rather simple implication $(iii)\Rightarrow (i)$ and assume, as we may, that $X$ is $p$-AUS. We look at $1<p<\infty$ and $p=\infty$ separately. First consider the case $1<p<\infty$.  We note that $\sup_{\sigma>0}\overline{\rho}_X(\sigma)/\sigma^p<\infty$ if and only if there exists a constant $C\geqslant 1$ such that for any $x\in X$ and $\sigma\geqslant 0$, there exists a weak neighborhood $U$ of $0$ in $X$ such that for any $y\in U\cap B_X$, $\|x+\sigma y\|^p \leqslant \|x\|^p + C^p\sigma^p+\ee$.  A finite net argument yields that for any compact $G\subset X$ and $\ee>0$, there exists a weak neighborhood $U$ of $0$ such that for any $x\in G$, any scalar $b$ with $|b|\leqslant 1$, and any $y\in U\cap B_X$, $\|x+b y\|^p \leqslant \|x\|^p + C^p|b|^p$.  Using this fact, for $\ee>0$, we can define a winning strategy for Player I in the $T(C+\ee,p)$ game by fixing $(\ee_i)_{i=1}^\infty\subset (0,1)$.  Player I's initial choice $U_1$ is arbitrary. Once $U_1, x_1, \ldots, U_n, x_n$ have been chosen, let 
		$$G=\Bigl\{\sum_{i=1}^n b_i x_i: (b_i)_{i=1}^n \in B_{\ell_p^n}\Bigr\}$$ 
		and choose $U_{n+1}$ such that for any $x\in G$, $y\in U_{n+1}\cap B_X$, and any $b$ with $|b|\leqslant 1$, $\|x+b y \|^p \leqslant \|x\|^p + C^p|b|^p+\ee_{n+1}$.   This completes the recursive construction.   For any $m\in\nn$ and $(b_i)_{i=1}^m\in B_{\ell_p^m}$, 
		\begin{align*} \Bigl\|\sum_{i=1}^m b_i x_i\Bigr\|^p & \leqslant \Bigl\|\sum_{i=1}^{m-1}b_ix_i\Bigr\|^p + C^p|b_m|^p+\ee_m \\ & \leqslant \Bigl\|\sum_{i=1}^{m-2}b_ix_i\Bigr\|^p + C^p|b_{m-1}|^p + C^p|b_m|^p + \ee_{m-1}+\ee_m \\ & \leqslant C^p\sum_{i=1}^m |b_i|^p + \sum_{i=1}^m \ee_i \leqslant C^p+\sum_{i=1}^\infty \ee_i.
		\end{align*}  
		If  $\sum_{i=1}^\infty \ee_i$ was chosen small enough, depending on the modulus of continuity of the function $t\mapsto t^p$ on $[0,C+1]$, this strategy is a winning strategy for Player I in the $T(C+\ee,p)$ game. Therefore $X$ has $\textsf{T}_p$. For the $p=\infty$ case, the argument is similar, except there exists a constant $C$ such that for any $x\in X$ and $\sigma >0$, there exists a weak neighborhood $U$ of $0$ such that for any $y\in U\cap B_X$, $\|x+\sigma y\|\leqslant \max\{\|x\|, C\sigma\}$.  
		
		\smallskip 
		Finally, we refer the reader to \cite{CauseyPositivity2018} for the difficult implication $(i)\Rightarrow (iv)$.

	\end{proof}

	We now turn to the characterizations of $\textsf{A}_p$. Note that item $(iii)$ is a completely new characterization. For that reason we recall the old arguments and detail the new ones. As we will see later, $\textsf{A}_\infty=\textsf{N}_\infty$, so we limit ourselves to $p\in (1,\infty)$ in the next statement. 
	\begin{theorem}\label{atheorem} 
		Fix $1<p<\infty$ and let $q$ be conjugate to $p$. Let $X$ be a Banach space. The following are equivalent  
		\begin{enumerate}[(i)]
			\item $X\in \textsf{\emph{A}}_p$. 
			\item There exists a constant $c>0$ such that for any weak neighborhood base $D$ at $0$ in $X$, any $n\in\nn$,  and any weakly null $(x_t)_{t\in D^{\leqslant n}}\subset B_X$, there exists $t\in D^n$ such that $\|(x_{t|_i})_{i=1}^n\|_q^w\le c$. 
			\item There exist a constant $M\ge 1$ and a constant $C>0$, such that for any $\tau \in (0,1]$ there exists a norm $|\ |$ on $X$ satisfying $M^{-1}\|x\|_X\le |x|\le M\|x\|_X$ for all $x\in X$ and
			$$\forall \sigma\ge \tau,\ \ \overline{\rho}_{|\ |}(s)\le Cs^p.$$
			\item $X$ has $q$-summable Szlenk index.   
		\end{enumerate}
	\end{theorem}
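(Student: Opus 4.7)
The plan is to establish the cycle $(i)\Leftrightarrow (ii)\Leftrightarrow (iv)\Leftrightarrow (iii)$, where only the two arrows touching $(iii)$ require new arguments; the remaining equivalences amount to translating already-known game-theoretic and tree characterizations.

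The equivalence $(i)\Leftrightarrow(ii)$ is immediate from the game definition of $\textsf{A}_p$: Player~II's winning strategies in the $A(c,p,n)$ game correspond, exactly as in the discussion of the previous section, to weakly null trees $(x_t)_{t\in D^{\leqslant n}}\subset B_X$ all of whose branches satisfy $\|(x_{t|_i})_{i=1}^n\|_q^w>c$, so that $\textsf{a}_p(X)$ is precisely the infimum of the $c$ for which $(ii)$ holds. The equivalence $(ii)\Leftrightarrow(iv)$ is the standard tree characterization of $q$-summable Szlenk index: for $(iv)\Rightarrow(ii)$, given any weakly null tree in $B_X$ one stabilizes it via Corollary~\ref{prune2} so that along each branch the natural ``norming scales'' $\ee_i$ correspond to a nonempty iterated Szlenk derivation $s_{\ee_1}\cdots s_{\ee_n}(B_{X^*})\neq\varnothing$, after which $(iv)$ forces $\sum\ee_i^q\leqslant c^q$ and hence $\|(x_{t|_i})_{i=1}^n\|_q^w\leqslant c$; the converse extracts witnessing vectors from each Szlenk derivative to build a weakly null tree violating $(ii)$ whenever $(iv)$ fails.

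The genuinely new content is the equivalence $(iii)\Leftrightarrow(iv)$. For $(iii)\Rightarrow(iv)$, fix scales $\ee_1,\ldots,\ee_n$ with $s_{\ee_1}\cdots s_{\ee_n}(B_{X^*})\neq\varnothing$, let $\tau$ be a small fixed multiple of $\min_i\ee_i$, and use $(iii)$ to produce an $M$-equivalent norm $|\cdot|_\tau$ with $\overline{\rho}_{|\cdot|_\tau}(s)\leqslant Cs^p$ for $s\geqslant\tau$. By Proposition~\ref{moduliduality} this converts into a lower bound $\overline{\theta}^*_{|\cdot|_\tau}(\ee)\gtrsim\ee^q$ for $\ee$ in the corresponding range, and because the $M$-equivalence is uniform in $\tau$, the Szlenk derivations in $\|\cdot\|$ and in $|\cdot|_\tau$ control each other up to a factor depending only on $M$. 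Applying this lower bound to each scale $\ee_i$ along the given chain of derivations yields $\sum\ee_i^q\leqslant c^q$ with $c$ depending only on $M$ and $C$.

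The main step is $(iv)\Rightarrow(iii)$, which I would prove by a Lancien-type averaging procedure adapted to a prescribed scale. For each $\tau\in(0,1]$ construct a weak$^*$-compact, convex, symmetric set
$$K_\tau\;=\;\overline{\text{co}}^{w^*}\Bigl(\bigcup_{k\,:\,2^{-k}\geqslant\tau}\lambda_k\,s^{m_k}_{2^{-k}}(B_{X^*})\Bigr)\;\supset\;B_{X^*},$$
where $\lambda_k>0$ and $m_k\in\nn$ are chosen using $(iv)$ so that, on the one hand, $\sum_k\lambda_k\leqslant M$ uniformly in $\tau$ (the mass bound is exactly where $q$-summability is used), and on the other hand, each Szlenk derivation of $K_\tau$ at scale $\ee\geqslant\tau'$ (with $\tau'$ a fixed multiple of $\tau$) removes a layer of thickness $\gtrsim\ee^q$. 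The Minkowski functional of $K_\tau$ then defines the sought norm $|\cdot|_\tau$, and Proposition~\ref{moduliduality} transforms $\overline{\theta}^*_{|\cdot|_\tau}(\ee)\gtrsim\ee^q$ on $[\tau',1]$ into $\overline{\rho}_{|\cdot|_\tau}(s)\leqslant Cs^p$ on $[\tau,1]$. The main obstacle, and the reason the classical Lancien renorming does not suffice, is to keep the equivalence constant $M$ uniform in $\tau$ while retaining a genuine $p$-type modulus (not merely an AUS estimate): this is precisely what the strengthening of Szlenk power type $q$ to $q$-summable Szlenk index enables, by allowing the averaging sum to be truncated at the scale $\tau$ with a uniformly bounded tail.
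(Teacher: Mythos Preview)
Your cycle differs from the paper's: the paper proves $(i)\Leftrightarrow(ii)$, then $(ii)\Rightarrow(iii)\Rightarrow(iv)\Rightarrow(ii)$, with the principal new implication being the \emph{primal} construction $(ii)\Rightarrow(iii)$. There the authors define, for each $n$, a gauge
\[
f_n(x)^p=\underset{(x_t)}{\sup}\ \underset{t}{\inf}\ \underset{(a_i)}{\sup}\ \frac{1}{A^p}\Bigl\|x+\sum_{i=1}^n a_ix_{t|_i}\Bigr\|^p-\sum_{i=1}^n|a_i|^p
\]
over weakly null trees in $B_X$, establish the one–step inequality $\limsup_U f_n(x+\sigma x_U)^p\leqslant f_{n+1}(x)^p+\sigma^p$, average $g_N^p=\frac1N\sum_{n<N}f_n^p$, and convexify to obtain $|\cdot|_N$ with $\overline{\rho}_{|\cdot|_N}(\sigma)\leqslant\frac{A^p}{p}(\sigma^p+\frac1N)$ and $A^{-1}\|\cdot\|\leqslant|\cdot|_N\leqslant\|\cdot\|$. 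Choosing $N$ large relative to $\tau$ gives $(iii)$ with $M=A$ independent of $\tau$. Your $(iii)\Rightarrow(iv)$ is essentially the paper's argument; your handling of $(ii)\Leftrightarrow(iv)$ is also aligned with what the paper does (and cites).

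The substantive divergence is your attempt at $(iv)\Rightarrow(iii)$ via a dual Lancien-type body $K_\tau=\overline{\text{co}}^{\,w^*}\bigl(\bigcup_{2^{-k}\geqslant\tau}\lambda_k\,s^{m_k}_{2^{-k}}(B_{X^*})\bigr)$. This sketch has a real gap. The assertion that ``each Szlenk derivation of $K_\tau$ at scale $\ee\geqslant\tau'$ removes a layer of thickness $\gtrsim\ee^q$'' does not follow from the definition: the Szlenk derivation does \emph{not} interact well with convex hulls, and there is no general inclusion of the form $s_\ee\bigl(\overline{\text{co}}^{\,w^*}A\bigr)\subset(1-\delta)\,\overline{\text{co}}^{\,w^*}A$ deducible from information about $s_\ee$ on the pieces of $A$. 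The classical Lancien renormings that do produce a $\overline{\theta}^*$ lower bound are not built as a single convex hull of dilated derived sets; they add a sum of gauges (distances to successive derived sets) to the original dual norm, and the proof that this sum drops under one derivation is a separate argument. Moreover, your mass bound $\sum_k\lambda_k\leqslant M$ controls only the outer radius of $K_\tau$; it says nothing about the contraction of $K_\tau$ under $s_\ee$. As written, the construction does not yield $\overline{\theta}^*_{|\cdot|_\tau}(\ee)\gtrsim\ee^q$ on $[\tau',1]$, so the implication $(iv)\Rightarrow(iii)$ is not established. The paper sidesteps this entirely by working on the primal side, where the tree hypothesis $(ii)$ plugs directly into the gauge $f_n$ and the uniform equivalence constant $A=2a$ is visibly independent of the averaging depth $N$.
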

	
	\begin{proof} The equivalence between $(i)$ and $(ii)$ follows again from our initial discussion on games. 
		
		
		\smallskip
		The implication $(ii) \Rightarrow (iii)$ is new. Let us prove it. Fix $1<p<\infty$.  Suppose that $X$ is a Banach space and $a\geqslant 1$ is such that for each $n\in\nn$ and  $(x_t)_{t\in D^{\leqslant n}}\subset B_X$ weakly null, there exists $t\in D^n$ such that for all scalar sequences $(a_i)_{i=1}^n$, 
		\[\Bigl\|\sum_{i=1}^n a_ix_{t|_i}\Bigr\|^p\leqslant a^p\sum_{i=1}^n |a_i|^p.\]    
		
		We first note that for any $x\in X$, $n\in\nn$, and $(x_t)_{t\in D^{\leqslant n}}\subset B_X$ weakly null, there exists $t\in D^n$ such that for all scalar sequences $(a_i)_{i=1}^n$, 
		\begin{equation}\label{f1}
			\Bigl\|x+\sum_{i=1}^n a_ix_{t|_i}\Bigr\|^p\leqslant (2a)^p\Bigl[\|x\|^p+ \sum_{i=1}^n |a_i|^p\Bigr].
		\end{equation}
		Indeed, for an appropriate branch $t$, it holds that 
		\begin{align*} \Bigl\|x+\sum_{i=1}^n a_ix_{t|_i}\Bigr\|^p & \leqslant 2^p\max\Bigl\{\|x\|^p, \Bigl\|\sum_{i=1}^n a_ix_{t|_i}\Bigr\|^p\Bigr\}\\ & \leqslant (2a)^p\max\Bigl\{\|x\|^p,\sum_{i=1}^n |a_i|^p\Bigr\} \leqslant (2a)^p\Bigl[\|x\|^p+ \sum_{i=1}^n |a_i|^p\Bigr].
		\end{align*}
		
		Let now $A=2a$. Set $f_0(x)=\frac{\|x\|}{A}$ and for $n\in\nn$, define 
		\[f_n(x)=\Biggl[ \underset{(x_t)}{\ \sup\ }\underset{t}{\ \inf\ }\underset{(a_i)}{\ \sup\ } \frac{1}{A^p}\Bigl\|x+\sum_{i=1}^n a_ix_{t|_i}\Bigr\|^p-\sum_{i=1}^n |a_i|^p\Biggr]^{1/p},\] 
		where the outer supremum is taken over all weakly null collections $(x_t)_{t\in D^{\leqslant n}}$ in  $B_X$, the infimum is taken over $t\in D^n$, and the inner supremum is taken over all scalar sequences $(a_i)_{i=1}^n$. It follows from taking $x_t=0$ for all $t$ that $f_n(x)\geqslant \frac{\|x\|}{A}$ for all $x\in X$ and $n\in\nn$. On the other hand, it follows from  (\ref{f1}) that $f_n(x)\leqslant \|x\|$ for all $n\in\nn$. We also have that  $f_n(cx)=|c|f_n(x)$, for each $n\in\nn\cup \{0\}$, each $x\in X$, and each scalar $c$. Let us detail this last fact. It is clear that $f_n(0)=0$, so assume $c\neq 0$. It is also clear that $f_0(cx)=|c|f_n(x)$. Then we fix $n\in\nn$, $x\in X$ and $n\in\nn$. For an arbitrary $(x_t)_{t\in D^{\leqslant n}}\subset B_X$ weakly null and $b>f_n(x)$, there exists $t\in D^n$ such that for all $(a_i)_{i=1}^n$, \[\frac{1}{A^p}\Bigl\|x+\sum_{i=1}^n a_ix_{t|_i}\Bigr\|^p-\sum_{i=1}^n |a_i|^p \leqslant b^p.\]    Then 
		\begin{align*}
			&\frac{1}{A^p}\Bigl\|cx+\sum_{i=1}^n a_ix_{t|_i}\Bigr\|^p-\sum_{i=1}^n |a_i|^p\\ &=|c|^p\Bigl[\frac{1}{A^p}\Bigl\|x+\sum_{i=1}^n c^{-1}a_ix_{t|_i}\Bigr\|^p-\sum_{i=1}^n |c^{-1}a_i|^p  \Bigr] \leqslant |c|^pb^p.
		\end{align*}   
		Since this holds for any $(x_t)_{t\in D^{\leqslant n}}\subset B_X$ weakly null, it holds that $f_n(cx) \leqslant |c|f_n(x)$.   Repeating the argument, we deduce that $f_n(x) =f_n(c^{-1}cx) \leqslant |c|^{-1}f_n( cx)$, which gives the reverse inequality. 
		
		The key step will be to show the following:    for each $n\in\nn\cup \{0\}$, each weakly null net $(x_U)_{U\in D}\subset B_X$, and each $\sigma>0$, 
		\begin{equation}\label{f2}
			\underset{U}{\lim\sup} f_n(x+\sigma x_U)^p \leqslant f_{n+1}(x)^p+\sigma^p.
		\end{equation}
		So assume $\eta < \underset{U}{\lim\sup} f_n(x+\sigma x_U)^p$. By passing to a subnet and relabeling, we can assume $\eta < f_n(x+\sigma x_U)^p$ for all $U$.   For each $U$, we find $(x^U_t)_{t\in D^{\leqslant n}}\subset B_X$ weakly null such that for each $t\in D^n$, there exists $(a_i)_{i=1}^n$ such that \[\frac{1}{A^p}\Bigl\|x+\sigma x_U + \sum_{i=1}^n a_ix^U_{t|_i}\Bigr\|^p - \sum_{i=1}^n |a_i|^p > \eta.\]
		We define the weakly null collection $(x_t)_{t\in D^{\leqslant n+1}}\subset B_X$ by letting $x_{(U)}=x_U$ and $x_{(U, U_1, \ldots, U_k)}=x^U_{(U_1, \ldots, U_k)}$ for $1\leqslant k\leqslant n$.    By the definition of $f_{n+1}(x)^p$, for any $\ee>0$, there exists $s\in D^{n+1}$ such that for all $(b_i)_{i=1}^{n+1}$, \[\frac{1}{A^p}\Bigl\|x+\sum_{i=1}^{n+1} b_i x_{s|_i}\Bigr\|^p - \sum_{i=1}^{n+1}|b_i|^p \leqslant f_{n+1}(x)^p+\ee.\]   Write $s=(U, U_1, \ldots, U_n)$ and let $t=(U_1, \ldots, U_n)$.  Then there exists $(a_i)_{i=1}^n$ such that, combining this paragraph with the previous and letting $b_1=\sigma$ and  $b_{i+1}=a_i$ for $1\leqslant i\leqslant n$, it holds that  \begin{align*} \eta & < \frac{1}{A^p}\Bigl\|x+\sigma x_U + \sum_{i=1}^n a_i x^U_{s|_i}\Bigr\|^p -\sum_{i=1}^n |a_i|^p \\ & = \frac{1}{A^p}\Bigl\|x+\sum_{i=1}^{n+1}b_ix_{s|_i}\Bigr\|^p - \sum_{i=1}^{n+1}|b_i|^p +\sigma^p \leqslant f_{n+1}(x)^p+\ee +\sigma^p. \end{align*}
		Therefore $\eta < f_{n+1}(x)^p+\ee+\sigma^p$. Since $\eta < \underset{U}{\lim\sup}f_n(x+\sigma x_U)^p$ and $\ee>0$ were arbitrary, we have proved (\ref{f2}). 
		
		The next step is to average the $f_n^p$'s. So, fix $N\in\nn$ and define \[g_N(x)^p=\frac{1}{N}\sum_{n=0}^{N-1}f_n(x)^p.\]   
		Clearly, we still have that for all $x\in X$ and $N\in\nn$, $\frac{\|x\|}{A} \leqslant g_N(x) \leqslant \|x\|$ and $g_N(cx)=|c|g_N(x)$ for all scalars $c$. Then, applying (\ref{f2}) for each $n\in \{0,\ldots,N-1\}$, we obtain that for any weakly null net $(x_U)_{U\in D}\subset B_X$, any $N\in\nn$, and $x\in AB_X$, 
		\begin{equation}\label{f3}
			\underset{U}{\lim\sup}\, g_N(x+\sigma x_U)^p \leqslant g_N(x)^p+\sigma^p + \frac{A^p}{N}.
		\end{equation}
		
		The last stage of the proof is to ``convexify'' our function $g_N$. For that purpose, we set \[|x|_N = \inf\Bigl\{\sum_{i=1}^n g_N(x_i): n\in\nn, x=\sum_{i=1}^n x_i\Bigr\},\] which defines an equivalent norm on $X$ satisfying $\frac{\|x\|}{A}\leqslant |x|_N \leqslant \|x\|$. Moreover, $B_X^{|\cdot|_N}$ is the closed, convex hull of $\{x\in X: g_N(x)<1\}$. We shall now prove that 
		\begin{equation}\label{f4}
			\forall \sigma>0,\ \ \overline{\rho}_{(X, |\cdot|_N)}(\sigma) \leqslant \frac{A^p}{p}\big(\sigma^p + \frac{1}{N}\big).
		\end{equation}
		
		First we fix $y\in X$ such that $g_N(y)<1$. From this it follows that $\|y\|\leqslant A$.  Fix $\sigma>0$ and $(y_U)_{U\in D}\subset B_X^{|\cdot|_N}$ weakly null, define $x_U=A^{-1}y_U\in B_X$, so $(x_U)_{U\in D}\subset B_X$ is weakly null.  Then we apply (\ref{f3}) to get
		\begin{align*}
			\underset{U}{\lim\sup}\,|y+\sigma y_U|^p_N & \leqslant \underset{U}{\lim\sup}\, g_N(y+\sigma A x_U)^p\\ 
			&\leqslant  g_N(y)+\sigma^pA^p +\frac{A^p}{N}<1+\sigma^pA^p +\frac{A^p}{N}.
		\end{align*}
		Therefore, by concavity of the function  $h(t)=(1+t)^{1/p}$,
		\[\underset{U}{\lim\sup}\, |y+\sigma y_U|_N-1 \leqslant \big(1+ \sigma^p A^p + \frac{A^p}{N}\big)^{1/p}-1 \leqslant \frac{A^p}{p}\big(\sigma^p+\frac1N\big).
		\]
		Next fix $x\in B_X^{|\cdot|_N}$.  As noted above, $B_X^{|\cdot|_N}$ is the closed, convex hull of $\{y\in X:g_N(y)<1\}$.   Therefore for each $\ee>0$, we can find $y_1, \ldots, y_k\in X$ with $g_N(y_i)<1$ and convex coefficients $w_1, \ldots, w_k$ such that $|x-\sum_{i=1}^k w_iy_i|<\ee$.  Then 
		\begin{align*} 
			\underset{U}{\lim\sup}\, |x+ \sigma y_U| -1 & \leqslant \ee + \underset{U}{\lim\sup}\, \sum_{i=1}^k w_i (|y_i-\sigma y_U|_N-1)\\ 
			&\leqslant \ee + \sum_{i=1}^k w_i \Big(\frac{A^p}{p}\big(\sigma^p+\frac1N\big)\Big) = \ee + \frac{A^p}{p}\big(\sigma^p+\frac1N\big).
		\end{align*} 
		Since $\ee>0$ was arbitrary, this finishes the proof of (\ref{f4}). 
		
		Finally, it is clear, by taking $N$ large enough in (\ref{f4}), that for any $\tau>0$ there exists an equivalent norm $|\cdot|$ on $X$ such that $\frac{\|x\|}{A}\leqslant |x|\leqslant \|x\|$ and for any $\sigma\geqslant \tau$, $\overline{\rho}_{(X, |\cdot|)}(\sigma) \leqslant \frac{A_1^p}{p}\sigma^p$. We have proved that $X$ satisfies (iii).
		
		\smallskip
		Next we prove $(iii) \Rightarrow (iv)$, which is also new. So assume $(iii)$ is satisfied. Then, it follows from Proposition \ref{moduliduality} that there exists $\gamma \in (0,1]$ so that for any $t_0\in (0,1]$ there exists a norm $|\ |$ on $X$ satisfying 
		$$\forall x\in X,\ M^{-1}\|x\|_X \le |x| \le M\|x\|_X\ \ \text{and}\ \ \forall t\in [t_0,1],\ \overline{\theta}^*_{|\ |}(t)\ge \gamma t^q.$$ 
		Fix now $\eps_1,\ldots,\eps_n \in (0,1]$ and pick an equivalent norm $|\ |$ as above for $t_0=\min\{\frac{\eps_1}{4M^2},\ldots,\frac{\eps_n}{4M^2}\}$. Assume that $s_{\eps_1}\ldots s_{\eps_n}B_{X^*}$ is not empty. Then $s_{\eps_1}\ldots s_{\eps_n}(MB_{|\ |^*})$ is non empty and by homogeneity, so is $s_{\frac{\eps_1}{M}}\ldots s_{\frac{\eps_n}{M}}(B_{|\ |^*})$. Thus, if we  denote $\sigma_\eps$ the Szlenk derivation on $X^*$ where the diameter is taken with respect to the norm $|\ |$, we have that  $\sigma_{\frac{\eps_1}{M^2}}\ldots \sigma_{\frac{\eps_n}{M^2}}B_{|\ |^*}$ is non empty. Then classical manipulations on the Szlenk derivation imply that 
		$$\frac12B_{|\ |^*} \subset \sigma_{\frac{\eps_1}{4M^2}}\ldots \sigma_{\frac{\eps_n}{4M^2}}B_{|\ |^*} \subset \prod_{k=1}^n\big(1-\frac{\gamma\eps_k^q}{4^qM^{2q}}\big) B_{|\ |^*}.$$
		The argument for the first inclusion can be found in \cite{Lancien2006} (proof of Proposition 3.3) and the second inclusion follows from the definition of $\overline{\theta}^*_{|\ |}$ and homogeneity. Finally we use the fact that $t\le -\log(1-t)$, for $t\in [0,1)$ and elementary calculus to deduce that $\sum_{k=1}^n \eps_k^q \le \frac{4^qM^{2q}}{\gamma} \log 2$. This finishes the proof. 
		
		\smallskip
		We now turn to $(iv)$ implies $(ii)$. This was already proved in \cite{CauseyIllinois2018} in a more general setting. We include the simpler proof in our situation for the sake of clarity. So, let $M$ be such that if $\ee_1, \ldots, \ee_n\geqslant 0$ are such that $s_{\ee_1}\ldots s_{\ee_n}(B_{X^*})\neq 0$, then $\sum_{i=1}^n \ee_i^q \leqslant M^q$. Let $D$ be a weak neighborhood base of $0$ in $X$ and assume that $c>0$ is such that, for some $n\in\nn$ and $(x_t)_{t\in D^{\leqslant n}}$ weakly null in  $B_X$ we have that for each $t\in D^n$, there exists $(a_i)_{i=1}^n\in B_{\ell_p^n}$ satisfying $\|\sum_{i=1}^n a_ix_{t|_i}\|>c$.   For each $t\in D^n$, fix $x^*_t\in B_{X^*}$ and $(a^t_i)_{i=1}^n\in B_{\ell_p^n}$ such that \[\text{Re\ } x^*_t\Bigl(\sum_{i=1}^n a_i^tx_{t|_i}\Bigr) = \Bigl\|\sum_{i=1}^n a_i^tx_{t|_i} \Bigr\| > c.\]  Define $f:D^n\to  B_{\ell_\infty^n}$ by letting $f(t) = ( x^*_t(x_{t|_1}), \ldots, x^*_t(x_{t|_n}))$. Fix $\delta>0$ arbitrary.  By Corollary \ref{prune2}, there exist  $(b_i)_{i=1}^n\in B_{\ell_\infty^n}$ and $\theta:D^n\to D^n$ preserving lengths and initial segments such that for all $t\in D^n$ and $1\leqslant i\leqslant n$,  
		\begin{enumerate}[(i)]
			\item if $\theta((U_1, \ldots, U_k))=(V_1, \ldots, V_k)$, then $V_k\subset U_k$, and 
			\item $|x^*_{\theta(t)}(x_{\theta(t)|_i})-b_i|<\delta$. 
		\end{enumerate} 
		By replacing $x_s$ with $x_{\theta(s)}$ and $x^*_t$ with $x^*_{\theta(t)}$ for each $s\in D^{\leqslant n}$ and $t\in D^n$, we can relabel and assume that the original collections $(x_t)_{t\in D^{\leqslant n}}$ and $(x^*_t)_{t\in D^n}\subset B_{X^*}$ satisfy this property. 
		
		Define $\ee_i = \max\{0,|b_i|-2\delta\}$ for each $1\leqslant i\leqslant n$. We will prove that for each $0\leqslant j\leqslant n$ and $t\in D^{n-j}$, there exists $x^*\in s_{\ee_{n-j+1}}\dots s_{\ee_n}(B_{X^*})$, and if $j<n$, this $x^*$ can be chosen such that  for each $1\leqslant i\leqslant n-j$, $|x^*(x_{t|_i})-b_i|\leqslant \delta$. We prove this claim by induction on $j$. By convention, in the $j=0$ case, $s_{\ee_{n+1}}s_{\ee_n}(B_{X^*})=B_{X^*}$ and we just take $x^*=x^*_t\in B_{X^*}$.  Next, assume the result holds for some $0\leqslant j< n$.   By the inductive hypothesis, for each $t\in D^{n-j-1}$ and $U\in D$, since $t\smallfrown(U)\in D^{n-j}$,  there exists $x^*_U\in s_{\ee_{n-j+1}}\ldots s_{\ee_n}(B_{X^*})$ such that for each $1\leqslant i\leqslant n-j-1$, $|x^*_U(x_{t|_i})-b_i|\leqslant \delta$ and $|x^*_U(x|_{t\smallfrown (U)})-b_{n-j}| \leqslant \delta$.   If $\ee_{n-j}=0$, we pick any $U$ in $D$ and set $x^*=x^*_U$.  Note that the conclusions are satisfied, since, by convention \[x^*_U\in s_{\ee_{n-j+1}}\ldots s_{\ee_n}(B_{X^*})=s_{\ee_{n-j}}s_{\ee_{n-j+1}}\ldots s_{\ee_n}(B_{X^*}).\] 
		Consider now the case $\ee_{n-j}>0$.   If $x^*$ is any weak$^*$-cluster point of $(x^*_U)_{U\in D}$, then clearly $|x^*(x_{t|_i})-b_i| \leqslant \delta$ for each $1\leqslant i\leqslant n-j-1$. Note also that, since $(x_{t\smallfrown (U)})_{U\in D}$ is weakly null, there exists $U_0\in D$ such that for all $U\subset U_0$, $|x^*(x_{t\smallfrown (U)})|<\delta$. This implies that  
		\[\forall U\subset U_0,\ \ \|x^*_U-x^*\|\ge |(x^*_U-x^*)(x_{t\smallfrown (U)})| > |b_{n-j}|-2\delta =\ee_{n-j}.\]
		We now use that $x^*$ is a weak$^*$-cluster point of $(x^*_U)_{U\subset U_0}\subset s_{\ee_{n-j+1}}\ldots s_{\ee_n}(B_{X^*})$ to deduce that $x^*\in s_{\ee_{n-j}}\ldots s_{\ee_n}(B_{X^*})$. This finishes the inductive proof of our claim.  Applying this claim for $j=n$ yields the existence of some $x^*\in s_{\ee_1}\ldots s_{\ee_n}(B_{X^*})$, from which it follows that $\sum_{i=1}^n \ee_i^q\leqslant M^q$. We can now use this information to estimate the constant $c$. We define $I=\{i\leqslant n: |b_i|>2\delta\}$.  Then, for any $t\in D^n$, 
		\begin{align*}  c &< \text{Re\ }x^*_t\Bigl(\sum_{i=1}^n a_i^t x_{t|_i}\Bigr)  \leqslant \delta n +  \sum_{i=1}^n |a_i^t| |b_i| \leqslant 3\delta n + \sum_{i\in I}|a_i^t||b_i|\\ 
			&\leqslant 5\delta n +\sum_{j\in I}|a_i^t|\ee_i  \leqslant 5\delta n + \|(a_i^t)_{i\in I}\|_{\ell_p^n}\|(\ee_i)_{i\in I}\|_{\ell_q^n} \leqslant 5\delta n + M.\end{align*} Since $\delta>0$ was arbitrary, we conclude that $c\leqslant M$. This finishes the proof of this last implication. 
		
	\end{proof}

	Next we describe the class $\textsf{{N}}_p$. A more general version of the following result in proved in \cite{Causey3.5}. 
	
	\begin{theorem}\label{ntheorem} 
		Fix $1<p\leqslant \infty$ and let $q$ be conjugate to $p$. Let $X$ be a Banach space. The following are equivalent. 
		\begin{enumerate}[(i)]
			\item $X\in \textsf{\emph{N}}_p$. 
			\item There  exists a constant $K>0$ such that for any $n\in\nn$ and any weakly null collection  $(x_t)_{t\in D^{\le n}}$ in $B_X$, there exists $t\in D^n$ such that $\|\sum_{i=1}^nx_{t|_i}\|\le Kn^{1/p}$.
			\item There exists a constant $M\ge 1$ and a constant $c>0$ such that for each $\sigma\in (0,1]$, there exists a  norm $|\ |$ on $X$ such that $M^{-1}|x|\le \|x\|_X\le M|x|$ for all $x\in X$ and
			\begin{enumerate}[(a)]
				\item if $1<p<\infty$, $\overline{\rho}_{|\ |}(\sigma)\leqslant c \sigma^p$ 
				\item if $p=\infty$, $\overline{\rho}_{|\ |}(c) \leqslant \sigma$. 
			\end{enumerate}  
		\end{enumerate}
	\end{theorem}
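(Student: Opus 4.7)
The plan is to mirror the overall structure of the proof of Theorem \ref{atheorem}, with modifications reflecting the weaker sum-form estimate in $(ii)$.

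First, the equivalence $(i)\Leftrightarrow (ii)$ is immediate from the game-theoretic material of Section 2: Player I has a winning strategy in the $N(c,p,n)$ game exactly when every weakly null type-II tree $(x_t)_{t\in D^{\leqslant n}}\subset B_X$ admits a branch $t\in D^n$ with $\|\sum_{i=1}^n x_{t|_i}\|\leqslant cn^{1/p}$, so $\textsf{n}_p(X)$ is the infimum of such constants uniform in $n$.

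For $(iii)\Rightarrow (i)$ with $1<p<\infty$, fix $n\in\nn$ and apply $(iii)$ at scale $\sigma_n = n^{-1/p}$ to obtain an $M$-equivalent norm $|\cdot|$ satisfying $\overline{\rho}_{|\cdot|}(\sigma_n)\leqslant c\sigma_n^p = c/n$. Convexity of $\overline{\rho}_{|\cdot|}$, together with $\overline{\rho}_{|\cdot|}(0)=0$, upgrades this to the linear bound $\overline{\rho}_{|\cdot|}(\tau)\leqslant c\tau\, n^{-1+1/p}$ on $(0,\sigma_n]$. A finite-net compactness argument analogous to that in the proof of $(iii)\Rightarrow (i)$ of Theorem \ref{ttheorem} then allows one to prune any weakly null tree $(x_t)_{t\in D^{\leqslant n}}\subset B_X\subset MB_{|\cdot|}$ and select a branch $t$ along which the partial sums $y_k=\sum_{i=1}^k x_{t|_i}$ satisfy (after an initial regime $k\lesssim n^{1/p}$ handled by the triangle inequality) the AUS-driven recurrence $|y_k|\leqslant |y_{k-1}|+O(n^{-1+1/p})$, yielding $|y_n|\leqslant Cn^{1/p}$ and hence $\|y_n\|\leqslant MCn^{1/p}$. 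The case $p=\infty$ is analogous using the flat estimate $(iii)(b)$.

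The main work lies in $(ii)\Rightarrow (iii)$, which parallels the proof of $(ii)\Rightarrow (iii)$ in Theorem \ref{atheorem} with the multi-scale $\ell_p^n$-coefficient estimate replaced by the single-scale sum-form estimate. Fix $\sigma\in(0,1]$, let $N=\lceil \sigma^{-p}\rceil$, take $A$ a large multiple of the constant $K$ from $(ii)$, and for $0\leqslant n\leqslant N$ define
\[ f_n(x) = \biggl[\sup_{(x_t)}\inf_{t\in D^n}\sup_{\lambda\in\Kdb}\biggl(\frac{1}{A^p}\Bigl\|x+\lambda\sum_{i=1}^n x_{t|_i}\Bigr\|^p - n|\lambda|^p\biggr)\biggr]^{1/p}, \]
the outer supremum being over weakly null type-II trees $(x_t)_{t\in D^{\leqslant n}}\subset B_X$. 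Hypothesis $(ii)$ and the triangle inequality yield $\|x\|/A\leqslant f_n(x)\leqslant \|x\|$, while letting $\lambda$ range over $\Kdb$ restores positive-homogeneity $f_n(cx)=|c|f_n(x)$. The key recursive inequality $\limsup_U f_n(x+\lambda x_U)^p \leqslant f_{n+1}(x)^p + |\lambda|^p$ for weakly null nets $(x_U)\subset B_X$ is obtained by embedding a near-extremal depth-$n$ tree into a depth-$(n+1)$ tree with $x_{(U)}=x_U$, as in the $\textsf{A}_p$ argument. Averaging via $g_N(x)^p = \frac{1}{N}\sum_{n=0}^{N-1} f_n(x)^p$ and convexifying through the Minkowski functional $|x|_N = \inf\{\sum g_N(x_i): x=\sum x_i\}$ produces an equivalent norm with $\overline{\rho}_{|\cdot|_N}(\lambda)\leqslant \frac{A^p}{p}(|\lambda|^p + 1/N)$; specializing to $\lambda=\sigma$ with $N\sim \sigma^{-p}$ yields $\overline{\rho}_{|\cdot|_N}(\sigma)\leqslant c'\sigma^p$, which is $(iii)(a)$. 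The case $p=\infty$ is handled by a threshold-penalty variant of $f_n$ delivering $(iii)(b)$. The principal obstacle is the verification of the recursive inequality in the sum-form setting: because the single scalar $\lambda$ multiplies the \emph{entire} partial sum (rather than acting term-by-term as in the $\ell_p^n$-coefficient version of Theorem \ref{atheorem}), one must check that the tree-embedding argument matches penalty terms correctly at the one shared scale, and care is needed in the complex case to ensure that $\lambda$ ranging over $\Kdb$ preserves homogeneity under multiplication by complex scalars.
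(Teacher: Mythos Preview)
Your $(i)\Leftrightarrow(ii)$ and $(iii)\Rightarrow(i)$ are fine; note that for the latter the paper takes a different and somewhat slicker route, using a \emph{single} norm (the one produced by $(iii)$ at $\sigma=\tfrac12$) and the classical Orlicz-function comparison \`a la Kalton--Randrianarivony to turn the multiplicative recurrence into $n^{1/p}$ growth, whereas you renorm separately for each $n$ at scale $\sigma_n=n^{-1/p}$. Both work.

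The genuine problem is in $(ii)\Rightarrow(iii)$. The recursive inequality you need,
\[
\limsup_U f_n(x+\lambda_0 x_U)^p \leqslant f_{n+1}(x)^p + |\lambda_0|^p,
\]
does \emph{not} follow from the tree-embedding argument with your definition of $f_n$. Unwinding, $f_n(x+\lambda_0 x_U)^p$ involves $\sup_\mu\bigl[\frac{1}{A^p}\|x+\lambda_0 x_U+\mu\sum_i x_{t|_i}^U\|^p-n|\mu|^p\bigr]$, while the embedded $(n{+}1)$-tree only controls expressions of the form $\frac{1}{A^p}\|x+\nu(x_U+\sum_i x_{t|_i}^U)\|^p-(n{+}1)|\nu|^p$, i.e.\ a \emph{single} scalar in front of the whole sum. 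These match only when $\mu=\lambda_0=\nu$; for $\mu\neq\lambda_0$ there is simply no bound available from $f_{n+1}(x)$. You correctly flag this as the ``principal obstacle,'' but it is not a technicality to be checked---it is fatal to this particular $f_n$. Dropping the $\sup_\lambda$ (fixing $\lambda=\sigma$) repairs the recursion but destroys both positive homogeneity and the lower bound $f_n(x)\geqslant\|x\|/A$ (the zero tree gives only $\|x\|^p/A^p-n\sigma^p$), so the convexification step collapses.

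This is exactly why the paper does not attempt to transplant the $\textsf{A}_p$ renorming scheme: the $\textsf{A}_p$ hypothesis controls \emph{all} coefficient sequences $(a_i)\in B_{\ell_p^n}$, which is what makes the term-by-term penalty $\sum|a_i|^p$ compatible with the one-level embedding, whereas $\textsf{N}_p$ only controls the flat combination. The paper instead invokes the construction of \cite{GKL2001}, Theorem~4.2 (non-separable version in \cite{Causey3.5}), which builds the norm for a fixed target $\sigma$ by a different mechanism tailored to the single-scale estimate. You should either carry out that construction or find a genuinely new $f_n$ that is simultaneously homogeneous, bounded below by $\|\cdot\|/A$, and satisfies a one-step recursion using only the sum-form bound; the one you wrote down does not.
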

	
	\begin{proof} Again, the equivalence between $(i)$ and $(ii)$ follows from our initial discussion on games.
		
		\smallskip
		The argument for  $(ii) \Rightarrow (iii)$ is an adaptation of the proof of Theorem 4.2 in \cite{GKL2001} to the non-separable case. We refer the reader to this paper or to  \cite{Causey3.5}.
		
		\smallskip Let us briefly explain the simple implication $(iii) \Rightarrow (ii)$. Let us assume, as we may, that $\|\ \|$ satisfies $(iii)$ for $\sigma=\frac12$. We shall show the existence of a constant $C\ge 2$ such that $(ii)$ is satisfied. Let $(x_t)_{t\in D^{\le n}}$ be a weakly null tree in $B_X$. Pick $t\in D^{\le n}$ such that $2<\|\sum_{i}^k x_{t|_i}\|\le 3$ (if this is not possible we are done). Now we pick recursively $U_{k+1},\ldots,U_{n}$ so that for all $k<l\le n$, we have, if we denote $s=t\smallfrown (U_{k+1},\cdots U_{n})$,  $\|\sum_{i}^l x_{s|_i}\|>2$ and $\|\sum_{i}^l x_{s|_i}\|\le \|\sum_{i}^{l-1} x_{s|_i}\|(1+2c2^{-p})$. It now follows from classical use of Orlicz functions (see for instance the proof of Theorem 6.1 in \cite{KaltonRandrianarivony2008}) that there exist a constant $K>0$ so that  $\|\sum_{i=1}^nx_{t|_i}\|\le Kn^{1/p}$.
	\end{proof}

	We recall that $\textsf{{P}}_p$ is defined to be $\bigcap_{1<r<p}\textsf{{T}}_r$. Then we have. 
	\begin{theorem}\label{ptheorem} 
		Fix $1<p\leqslant \infty$ and let $q$ be conjugate to $p$. Let $X$ be a Banach space. The following are equivalent  
		\begin{enumerate}[(i)]
			\item $X\in \textsf{\emph{P}}_p$. 
			\item For each $1<r<p$, $X$ is $r$-AUS-able. 
			\item There exists an equivalent norm $|\cdot|$ on $X$ such that for all $1<r<p$, $X$ is $r$-AUS. 
			\item For each $1<r<p$, $\theta_n(X)=o(n^{1/r})$.  
			\item $\textsf{\emph{p}}(X)\leqslant q$. 
		\end{enumerate}
	\end{theorem}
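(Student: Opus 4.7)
The plan is to prove (i) $\Leftrightarrow$ (ii) $\Leftrightarrow$ (v) $\Leftrightarrow$ (iv) by quick arguments, observe that (iii) $\Rightarrow$ (ii) is trivial, and focus the real work on (ii) $\Rightarrow$ (iii). The equivalence (i) $\Leftrightarrow$ (ii) is just the definition $\textsf{P}_p = \bigcap_{1<r<p}\textsf{T}_r$ together with Theorem \ref{ttheorem}. For (ii) $\Rightarrow$ (v): an $r$-AUS norm gives $\overline{\theta}^*(\epsilon) \geq c\epsilon^{r^*}$ by Theorem \ref{ttheorem}(iv); iterating the Szlenk derivation yields $Sz(X,\epsilon) \leq c'\epsilon^{-r^*}$, so $\textsf{p}(X) \leq r^*$, and letting $r \to p^-$ gives (v). For (v) $\Rightarrow$ (ii): for each $s > q$, (v) yields $Sz(X,\epsilon) \leq c_s\epsilon^{-s}$ near $0$. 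A dyadic grouping argument — given $s_{\epsilon_1}\cdots s_{\epsilon_n}(B_{X^*}) \neq \varnothing$, classify the $\epsilon_i$'s into bands $[2^{-k-1}, 2^{-k})$, note that the count $N_k$ in each band satisfies $N_k \leq Sz(X,2^{-k-1}) \leq c_s2^{(k+1)s}$ since one can replace each $s_{\epsilon_i}$ in that band by the larger operation $s_{2^{-k-1}}$, and then sum $\sum_i\epsilon_i^{s'} \leq \sum_k N_k2^{-ks'}$ for any $s' > s$ — upgrades the polynomial bound to $s'$-summable Szlenk index. By Theorem \ref{atheorem}, $X \in \textsf{A}_{(s')^*} \subset \textsf{T}_{(s')^*}$, and as $s'$ ranges over $(q,\infty)$, $(s')^*$ covers $(1,p)$, giving (ii).

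For (ii) $\Leftrightarrow$ (iv): given (ii) and $r \in (1,p)$, choose $r' \in (r,p)$; the $r'$-AUS norm supplies $\theta_n(X) \leq Cn^{1/r'} = o(n^{1/r})$ by the winning-strategy argument of (iii) $\Rightarrow$ (i) in Theorem \ref{ttheorem}. Conversely, $\theta_n(X) = o(n^{1/r})$ implies $\textsf{n}_r(X) < \infty$, so $X \in \textsf{N}_r$; Theorem \ref{ntheorem}(iii) combined with Proposition \ref{moduliduality} yields scale-by-scale lower bounds $\overline{\theta}^*_{|\cdot|_\sigma}(\eta) \geq c\eta^{r^*}$ (for $\eta$ linked to $\sigma$) which, using uniform equivalence of the norms $|\cdot|_\sigma$, iterate to produce $\textsf{p}(X) \leq r^*$; letting $r \to p^-$ returns to (v) and hence to (ii).

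The substantive step is (ii) $\Rightarrow$ (iii). Pick $r_n \nearrow p$. By (ii) and Theorem \ref{ttheorem}(iii), choose equivalent $r_n$-AUS norms $|\cdot|_n$ on $X$, rescaled so that $\|\cdot\| \leq |\cdot|_n \leq M\|\cdot\|$ with $M$ independent of $n$. Combine them into a single norm via an $\ell_p$-type aggregation
\[ |x|^p = \sum_n a_n|x|_n^p \qquad (\text{or } |x| = \sup_n a_n|x|_n \text{ if } p = \infty), \]
with $(a_n)$ a summable sequence of positive weights making $|\cdot|$ equivalent to $\|\cdot\|$. This realises $(X,|\cdot|)$ as the diagonal of the $\ell_p$-sum of $(X, a_n^{1/p}|\cdot|_n)$, so $\overline{\rho}_{|\cdot|}$ is controlled by a combination of the individual moduli $\overline{\rho}_{|\cdot|_n}$ and the intrinsic $\sigma^p/p$ contribution of the $\ell_p$-sum. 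For each $r < p$, the indices $n$ with $r_n > r$ give the good AUS behaviour at small $\sigma$, yielding $\overline{\rho}_{|\cdot|}(\sigma) \leq C_r\sigma^r$.

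The main obstacle is the calibration of the weights $(a_n)$. The $r_n$-AUS constants $C_n$ of $|\cdot|_n$ may blow up as $r_n \to p$, so $(a_n)$ must decay fast enough to absorb this growth while still making the series define an equivalent norm; meanwhile, the norms $|\cdot|_n$ with $r_n \leq r$ fail to be $r$-AUS, and their total weight must not spoil the $r$-AUS modulus of $|\cdot|$. Balancing these competing requirements for every $r < p$ simultaneously, along with a careful $\ell_p$-sum modulus computation, is where the technical core of the argument resides.
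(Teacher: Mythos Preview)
Your argument for (v) $\Rightarrow$ (ii) contains a genuine error: the inclusion $\textsf{A}_{(s')^*} \subset \textsf{T}_{(s')^*}$ is false. Theorem~\ref{containments} gives the strict reverse inclusion $\textsf{T}_p \subsetneq \textsf{A}_p$ for every $1<p<\infty$. So after your dyadic argument produces $s'$-summable Szlenk index and hence $X\in\textsf{A}_{(s')^*}$, you cannot conclude $X\in\textsf{T}_{(s')^*}$. Replacing the bad inclusion by the correct $\textsf{A}_{(s')^*}\subset\textsf{P}_{(s')^*}$ does not help either: unpacking $\textsf{P}_{(s')^*}=\bigcap_{r<(s')^*}\textsf{T}_r$ presupposes exactly the implication you are trying to establish, so the argument becomes circular. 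Your route for (iv) $\Rightarrow$ (ii) is likewise affected, since it terminates in ``returns to (v) and hence to (ii)''.

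The paper does not attempt a self-contained proof of this step; it cites \cite{CauseyTAMS2019} for the hard renorming (v) $\Rightarrow$ (iii), from which (ii) is immediate. Your $\ell_p$-aggregation sketch for (ii) $\Rightarrow$ (iii) is a plausible alternative to that citation, but note that your assumption ``$\|\cdot\|\le |\cdot|_n\le M\|\cdot\|$ with $M$ independent of $n$'' is itself part of the difficulty: Theorem~\ref{ttheorem} gives an $r_n$-AUS renorming but says nothing about how the equivalence constant behaves as $r_n\to p$. The weights $a_n$ must absorb both the blow-up of the AUS constants $C_n$ and the blow-up of the equivalence constants $M_n$, while still yielding a power-$r$ modulus for \emph{every} $r<p$; carrying this out is essentially the content of the cited renorming theorem rather than a routine bookkeeping exercise.
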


	\begin{proof} The equivalence between $(i)$ and $(ii)$ follows from Theorem \ref{ttheorem}. The fact that $(ii)$ implies $(v)$ follows from Proposition \ref{moduliduality}. The implication $(v) \Rightarrow (iii)$ is proved in \cite{CauseyTAMS2019} in a very general setting (non-separable, for higher ordinals and operators). Obviously $(iii)$ implies $(ii)$. The implication $(ii) \Rightarrow (iv)$ also follows from Theorem \ref{ttheorem}. Finally $(iv) \Rightarrow (ii)$ relies on an averaging of the norms provided by $(iii)$ in  Theorem \ref{ntheorem}. 
	\end{proof}
	
	We finally summarize what is known about the inclusions between these classes.
	\begin{theorem}\label{containments} 
		Recall that  $\textsf{{D}}_1$  denotes the class the of all Banach spaces with Szlenk index at most $\omega$. Then
		\begin{enumerate}[(i)]
			\item $\textsf{\emph{D}}_1=\bigcup_{1<p\leqslant \infty}\textsf{\emph{T}}_p=\bigcup_{1<p\leqslant \infty}\textsf{\emph{A}}_p=\bigcup_{1<p\leqslant \infty}\textsf{\emph{N}}_p=\bigcup_{1<p\leqslant \infty}\textsf{\emph{P}}_p$. 
			\item For $1<p<\infty$, $\textsf{\emph{T}}_p\subsetneq \textsf{\emph{A}}_p\subsetneq \textsf{\emph{N}}_p\subsetneq \textsf{\emph{P}}_p$. 
			\item $\textsf{\emph{T}}_\infty\subsetneq \textsf{\emph{A}}_\infty= \textsf{\emph{N}}_\infty\subsetneq \textsf{\emph{P}}_\infty$.
		\end{enumerate}
	\end{theorem}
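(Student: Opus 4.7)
The plan is to establish the chain of inclusions $\textsf{T}_p \subseteq \textsf{A}_p \subseteq \textsf{N}_p \subseteq \textsf{P}_p$ uniformly for $p \in (1,\infty]$; this directly yields the inclusions in (ii) and (iii) and produces the chain of union-containments in (i), reducing what remains to three things: $\textsf{D}_1 \subseteq \bigcup_p \textsf{T}_p$ (to close (i)), the equality $\textsf{A}_\infty = \textsf{N}_\infty$ (the only nontrivial coincidence in (iii)), and the strict counterexamples for (ii) and (iii).

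The first two inclusions are routine. A winning strategy for Player I in $T(c,p)$ truncates to one in $A(c,p,n)$ because $\|(x_i)_{i=1}^n\|_q^w \leq \|(x_i)_{i=1}^\infty\|_q^w$, giving $\textsf{T}_p \subseteq \textsf{A}_p$. Using the duality formula for $\|\cdot\|_q^w$ at the vector $a = n^{-1/p}(1,\ldots,1)$ yields $\|\sum_{i=1}^n x_i\| \leq n^{1/p}\|(x_i)_{i=1}^n\|_q^w$, hence $\textsf{A}_p \subseteq \textsf{N}_p$. For $\textsf{N}_p \subseteq \textsf{P}_p$ with $1 < p < \infty$, invoke Theorem~\ref{ntheorem}(iii) to obtain, for each $\sigma \in (0,1]$, a uniformly equivalent norm $|\cdot|_\sigma$ satisfying $\overline{\rho}_{|\cdot|_\sigma}(\sigma) \leq c\sigma^p$; Proposition~\ref{moduliduality}(1) converts this to $\overline{\theta}^*_{|\cdot|_\sigma}(C\sigma^{p-1}) \geq c'\sigma^p$, and iterating the Szlenk derivation and transferring through the uniform equivalence yields $Sz(X,\varepsilon) = O(\varepsilon^{-q})$, hence $\textsf{p}(X) \leq q$ and $X \in \textsf{P}_p$ by Theorem~\ref{ptheorem}(v). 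The $p = \infty$ case is handled analogously via (iii)(b), using that $\textsf{p}(X) \geq 1$ always. With the chain in hand, (i) follows: the chain gives $\bigcup_p \textsf{T}_p \subseteq \bigcup_p \textsf{A}_p \subseteq \bigcup_p \textsf{N}_p \subseteq \bigcup_p \textsf{P}_p \subseteq \textsf{D}_1$, and conversely any $X \in \textsf{D}_1$ has $\textsf{p}(X) < \infty$, so choosing $q > \textsf{p}(X)$ with conjugate $p$ places $X$ in $\textsf{P}_p$ via Theorem~\ref{ptheorem}(v)$\Rightarrow$(iii), and then $X \in \textsf{T}_r$ for every $r \in (1,p)$ by the definition of $\textsf{P}_p$.

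The main obstacle is the equality $\textsf{A}_\infty = \textsf{N}_\infty$; the inclusion $\textsf{A}_\infty \subseteq \textsf{N}_\infty$ is the previous paragraph with $p = \infty$. For the reverse, assume $\textsf{n}_\infty(X) < \infty$ and use Theorem~\ref{ntheorem}(iii)(b) to obtain uniform constants $M \geq 1$, $c > 0$ such that for each $\sigma \in (0,1]$ there is an equivalent norm $|\cdot|_\sigma$ with $\overline{\rho}_{|\cdot|_\sigma}(c) \leq \sigma$. Fix $n$ and take $\sigma = 1/n$. For a weakly null tree $(x_t)_{t \in D^{\leq n}}$ in $B_X$, pass to $y_t = x_t/M \in B_{|\cdot|_\sigma}$ and construct by induction on the level $k$ a branch controlling \emph{all} $2^k$ signed partial sums $T_k^\varepsilon = c\sum_{i \leq k}\varepsilon_i y_{t|_i}$ simultaneously in $|\cdot|_\sigma$. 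The key estimate $|y + c x|_\sigma \leq |y|_\sigma(1 + \sigma)$, valid for $|y|_\sigma \geq 1$ and $x$ in an appropriate finite-codimensional subspace of $B_{|\cdot|_\sigma}$, is applied to each of the $2^{k+1}$ pairs $(T_k^\varepsilon, \pm)$; the intersection of the resulting subspaces yields a single weak neighborhood restricting the next tree node. Since $|T_k^\varepsilon|_\sigma$ grows by at most $(1 + 1/n)$ per step once it exceeds $1$ (with an initial linear phase bounded by $1 + c$), $|T_n^\varepsilon|_\sigma \leq e(1 + c)$ uniformly in $\varepsilon$, and the uniform equivalence delivers $\|\sum_{i=1}^n \varepsilon_i x_{t|_i}\|_X \leq K$ with $K$ independent of $n$ and $\varepsilon$. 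This gives $\textsf{a}_\infty(X) \leq K$. The hard point is precisely the calibration $\sigma = 1/n$: it is what forces $(1+\sigma)^n \leq e$ to absorb the branching over $2^n$ sign patterns without depending on $n$, and it is enabled only by the renorming characterization of $\textsf{N}_\infty$, not by a single norm.

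Finally, the strict containments in (ii) and (iii) are witnessed by explicit examples from the literature: convexified Tsirelson and $p$-Tsirelson spaces separate $\textsf{T}_p$ from $\textsf{A}_p$, Orlicz and $\ell_p$-direct-sum constructions separate $\textsf{A}_p$ from $\textsf{N}_p$ and $\textsf{N}_p$ from $\textsf{P}_p$, and analogous constructions handle the $p = \infty$ case; these examples are discussed in the final section.
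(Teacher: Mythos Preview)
Your argument is correct, and where it differs from the paper's proof it is more informative. The paper dispatches the entire theorem in a few lines: it declares the chain $\textsf{T}_p\subset \textsf{A}_p\subset \textsf{N}_p\subset \textsf{P}_p$ to be clear, closes~(i) via $\textsf{p}(X)<\infty$ exactly as you do, and then \emph{cites} \cite{Causey3.5} for both the equality $\textsf{A}_\infty=\textsf{N}_\infty$ and all the strict inclusions. You instead supply a self-contained proof of $\textsf{N}_\infty\subset\textsf{A}_\infty$ via the renorming characterization, calibrating $\sigma=1/n$ so that the $(1+\sigma)^n$ growth stays bounded while a single branch simultaneously controls all $2^n$ signed partial sums; this is correct and is essentially the argument behind the cited result (in the complex case one picks up an extra factor of~$2$ by splitting into real and imaginary parts, which is harmless). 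Two minor points: your route for $\textsf{N}_p\subset\textsf{P}_p$ through Proposition~\ref{moduliduality} and the Szlenk power type is sound but circuitous --- it is shorter to observe directly that $\theta_n(X)\leqslant \textsf{n}_p(X)\,n^{1/p}=o(n^{1/r})$ for every $r<p$ and invoke Theorem~\ref{ptheorem}(iv). And the strict-inclusion examples are not in the final section of this paper; they live in \cite{Causey3.5}, which the paper simply cites.
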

	
	\begin{proof} Let $1<p\le \infty$. We clearly have that $\textsf{{T}}_p\subset \textsf{{A}}_p\subset \textsf{{N}}_p\subset \textsf{{P}}_p$. It follows from $(iv)$ in Theorem \ref{ttheorem} and $(ii)$ in Theorem \ref{ptheorem} that $\textsf{{P}}_p \subset \textsf{{D}}_1$. We have already explained that if $X\in \textsf{{D}}_1$, then $\textsf{{p}}(X)<\infty$, so Theorem \ref{ptheorem} implies that $X\in \textsf{{T}}_r$, for some $1<r<p$. Our statement $(i)$ follows from gathering all these pieces of information.
		
		The fact that the inclusions are strict in $(ii)$, as well as $\textsf{{T}}_\infty \neq \textsf{{A}}_\infty = \textsf{{N}}_\infty \neq \textsf{{P}}_\infty$ are proved in \cite{Causey3.5}.

	\end{proof}
	
	\section{Separable determination}
	
	We start with a simple but fundamental statement about selecting  weakly null sequences from weakly null nets in AUS-able Banach spaces.
	
	\begin{proposition}\label{baba} 
		Let $X$ be a Banach space with $Sz(X)\leqslant \omega$. Let $D$ be a weak neighborhood base at $0$ in $X$. For any $(x_U)_{U\in D}\subset B_X$ such that $x_U\in U$ for all $U\in D$, there exists a function $f:\nn\to D$ such that $(x_{f(n)})_{n=1}^\infty$ is a weakly null sequence. 
	\end{proposition}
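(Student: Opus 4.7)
Plan. I would prove this by an inductive construction together with a careful diagonalization, using only that $X$ is Asplund (which follows from $Sz(X)\leqslant \omega$).

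The plan is to construct the sequence $U_1, U_2, \ldots$ in $D$ together with an auxiliary countable family of functionals in $X^*$ that will serve as ``test functionals''. Begin by picking any $U_1 \in D$ and setting $y_1 := x_{U_1}$. Having chosen $U_1 \supseteq U_2 \supseteq \cdots \supseteq U_n$ in $D$ and $y_i := x_{U_i}$ for $i \leqslant n$, form the separable subspace $Y_n := \overline{\mathrm{span}}\{y_1, \ldots, y_n\}$. Since $Sz(X)\leqslant \omega$ implies $X$ is Asplund, $Y_n^*$ is separable; fix a countable norm-dense sequence $(g_{n,k})_{k\in \nn}$ in $B_{Y_n^*}$ and, by Hahn--Banach, norm-preserving extensions $(\tilde g_{n,k})_{k\in \nn}$ in $B_{X^*}$. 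Arrange all $\tilde g_{m,k}$ with $m \leqslant n$ in a single enumeration $\phi_{n,1}, \phi_{n,2}, \ldots$. Because the net $(x_U)_{U\in D}$ is weakly null, the weak neighborhood $\{x : |\phi_{n,j}(x)| < 1/n,\ j \leqslant n\}$ contains some $U_{n+1} \in D$ with $U_{n+1} \subseteq U_n$; set $y_{n+1} := x_{U_{n+1}}$ and $f(n):=U_n$.

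Next I would identify the natural ambient subspace: let $Y := \overline{\mathrm{span}}\{y_n : n \in \nn\}$, which is separable, so $Y^*$ is separable by the Asplund property. Since $(y_n)\subset Y$ and every $x^* \in X^*$ restricts to a member of $Y^*$, the sequence is weakly null in $X$ if and only if it is weakly null in $Y$. The separability of $Y^*$ renders $B_{Y^{**}}$ weak$^*$-metrizable and reduces weak nullity of the bounded sequence $(y_n)$ in $Y$ to the condition $\phi(y_n)\to 0$ for every $\phi$ in any norm-dense countable subset of $B_{Y^*}$.

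The final step is a contradiction argument. Suppose some $y^* \in Y^*$ satisfies $\limsup_n |y^*(y_n)| = \delta > 0$; by weak$^*$-metrizability, extract a subsequence $(y_{n_k})$ converging weak$^*$ in $Y^{**}$ to some $y^{**}$ with $|y^{**}(y^*)| \geqslant \delta$. On the other hand, the construction in Step 1 ensures that, for every $\phi$ in the countable family $\mathcal F := \{\tilde g_{m,k} : m,k \in \nn\}$, one has $\phi(y_n) \to 0$, so $y^{**}(\phi|_Y) = 0$ for all such $\phi$. The proof is then completed by showing that $\{\phi|_Y : \phi \in \mathcal F\}$ is total in $Y^*$ (equivalently, that $y^{**}$ annihilating all these restrictions forces $y^{**}=0$), which contradicts $|y^{**}(y^*)| \geqslant \delta$.

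The main obstacle I expect is the totality claim at the end. Hahn--Banach extensions are not canonical, and a priori the restriction $\tilde g_{m,k}|_Y$ of an extension of $g_{m,k}\in B_{Y_m^*}$ need not be norm-close to a target $y^* \in Y^*$ on $Y\setminus Y_m$. The resolution should use both that $\bigcup_m B_{Y_m^*}$ norms $Y$ (because $\bigcup_m Y_m$ is dense in $Y$, so $\|y^*|_{Y_m}\|\nearrow \|y^*\|$) and the fact that, for any fixed $(m,k)$, the tail vectors $y_l$ with $l > \max(m,k)$ satisfy $|\tilde g_{m,k}(y_l)| < 1/l$ by construction, letting one control the ``extra'' contribution of $\tilde g_{m,k}|_Y$ beyond $Y_m$ along the subsequence $(y_{n_k})$.
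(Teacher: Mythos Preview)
Your plan has a genuine gap at precisely the point you flag as the ``main obstacle,'' and your suggested resolution does not close it. The family $\{\tilde g_{m,k}|_Y\}$ is certainly $1$-norming for $Y$ (because $\bigcup_m Y_m$ is dense in $Y$ and $\{g_{m,k}\}_k$ is dense in $B_{Y_m^*}$), but $1$-norming does not imply that the norm-closed linear span equals $Y^*$, which is what ``total in $Y^*$'' (zero annihilator in $Y^{**}$) actually requires. The Hahn--Banach extension $\tilde g_{m,k}$, restricted back to $Y$, is just \emph{some} norm-preserving extension of $g_{m,k}$ from $Y_m$ to $Y$; on $Y\setminus Y_m$ it is uncontrolled, so there is no reason the resulting collection should be norm-dense in $B_{Y^*}$. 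Your tail estimate $|\tilde g_{m,k}(y_l)|<1/l$ for $l>\max(m,k)$ only controls $\tilde g_{m,k}$ on the individual vectors $y_l$, not on their span, and therefore gives no handle on $\|\tilde g_{m,k}|_Y - y^*\|_{Y^*}$. Concretely, to bound $|y^*(y_{n_j})|$ you would need some $\tilde g_{m,k}$ that approximates $y^*$ on $Y_{n_j}\ni y_{n_j}$; this forces $m\geqslant n_j$, but then $\tilde g_{m,k}$ was defined \emph{after} $y_{n_j}$ was chosen and you have no smallness for $\tilde g_{m,k}(y_{n_j})$. The circularity you are fighting (the test functionals depend on $Y$, which depends on the sequence, which must be chosen against the test functionals) is real, and an Asplund-only diagonalization of the type you describe does not break it.

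The paper's argument is entirely different and sidesteps this issue by using the full strength of the hypothesis rather than only the Asplund consequence. Since $Sz(X)\leqslant\omega$, one has $X\in\textsf{T}_r$ for some $1<r<\infty$; a winning strategy $\phi$ for Player~I in the $T(c,r)$ game then \emph{produces} the sequence directly: at each stage, take $U_n\in D$ inside the weak neighborhood dictated by $\phi$ and let Player~II play $x_{U_n}\in U_n\cap B_X$. Because Player~I wins, $\|(x_{U_n})_{n=1}^\infty\|_s^w\leqslant c<\infty$ (with $1/r+1/s=1$), and any sequence with finite weak-$\ell_s$ norm is automatically weakly null, since $\sum_n |x^*(x_{U_n})|^s<\infty$ for every $x^*\in B_{X^*}$. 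No separable reduction, no dual bookkeeping, and no totality argument are needed.
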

	
	\begin{proof} Since $Sz(X)\leqslant \omega$, $X\in \textsf{T}_r$ for some $1<r<\infty$. Let $1/r+1/s=1$ and $c>\textsf{t}_r(X)$.    Let $\phi$ be a winning strategy in the $T(c,r)$ game.  Let $V_1$ be determined by $\phi$ and fix $U_1\in D$ such that $U_1\subset V_1$.  Let Player II choose $x_{U_1} \in U_1 \cap B_X$. Let $V_2$ be determined by $\phi$ and fix $U_2\in D$ such that $U_2\subset V_2$. Let Player II choose $x_{U_2} \in U_2 \cap B_X$.  Continue in this way until $U_1, U_2, \ldots$ have been chosen.   Define $f(n)=U_n$ and note that $\|(x_{f(n)})_{n=1}^\infty\|_s^w =\|(x_{U_n})_{n=1}^\infty\|_s^w \leqslant c< \infty$.  Therefore $(x_{f(n)})_{n=1}^\infty$ is weakly null.   
	\end{proof}
	
	We are now ready to give a unified proof of the separable determination of all the properties considered in this paper. Before to state it, let us mention that summable Szlenk index and having power type Szlenk index was proved to be separably determined by Draga and Kochanek in \cite{DragaKochanek}.
	
	\begin{theorem}\label{separabledetermination} 
		If $X$ is a Banach space with $Sz(X)\leqslant \omega$, then for each $1<p\leqslant \infty$, \[\textsf{\emph{t}}_p(X)=\sup \{\textsf{\emph{t}}_p(E): E\leqslant X\text{\ is separable}\},\] and this supremum is attained, although possibly infinite.    The same is true of $\textsf{\emph{a}}_p(X)$,  $\textsf{\emph{n}}_p(X)$, and $\theta_n(X)$. In particular, if $X$ is a Banach space all of whose separable subspaces lie in $\textsf{\emph{T}}_p$, then $X$ lies in $\textsf{\emph{T}}_p$.  The same conclusion holds for $\textsf{\emph{A}}_p$, $\textsf{\emph{N}}_p$, $\textsf{\emph{P}}_p$ and $\textsf{\emph{D}}_1$ 
	\end{theorem}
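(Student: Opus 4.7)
My plan is to prove the equality $\textsf{t}_p(X) = \sup\{\textsf{t}_p(E) : E \leqslant X \text{ separable}\}$ in two steps, with attainment derived by a simple diagonal argument; the constants $\textsf{a}_p$, $\textsf{n}_p$, and $\theta_n$ will be handled in essentially the same way.

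First, the trivial direction $\sup \leqslant \textsf{t}_p(X)$: if Player I has a winning strategy in the $T(c,p)$ game on $X$, it restricts to a winning strategy on any subspace $E \leqslant X$ by intersecting each proposed weak neighborhood of $0$ in $X$ with $E$, which remains a weak neighborhood of $0$ in $E$ and forces Player II's response in $E$ to also be a legal move in $X$. The same restriction argument covers the finite-depth games for $\textsf{a}_p$, $\textsf{n}_p$, and $\theta_n$.

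For the reverse inequality, fix $c < \textsf{t}_p(X)$. By the game/tree correspondence (the propositions before Remark 2.4), Player II has a winning strategy, hence a weakly null collection $(x_t)_{t \in D^{<\omega}} \subset B_X$ with $\|(x_{\tau|_i})_{i=1}^\infty\|_q^w > c$ for every $\tau \in D^\omega$. The key step is to extract a countable subtree. Recursively: assuming nodes $y_s = x_{\phi(s)}$ have been defined for all $s \in \nn^{\leqslant k}$, the collection $(x_{\phi(s) \smallfrown (U)})_{U \in D}$ is a weakly null net in $B_X$, so Proposition \ref{baba} produces a sequence $(U_n^s)_{n \in \nn} \subset D$ such that $(x_{\phi(s) \smallfrown (U_n^s)})_{n=1}^\infty$ is weakly null; set $\phi(s \smallfrown (n)) = \phi(s) \smallfrown (U_n^s)$. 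The resulting $(y_t)_{t \in \nn^{<\omega}}$ is weakly null of type II, and every infinite branch is an initial-segment sequence of some branch of the original tree, so all branches retain $\|\cdot\|_q^w > c$.

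Let $E = \overline{\text{span}}\{y_t : t \in \nn^{<\omega}\}$, a separable subspace of $X$. To verify $\textsf{t}_p(E) \geqslant c$, I use the tree to define a winning strategy for Player II in the $T(c,p)$ game on $E$: when Player I plays a weak neighborhood $W_{k+1}$ of $0$ in $E$, Player II picks $n_{k+1}$ with $y_{(n_1,\ldots,n_{k+1})} \in W_{k+1}$, possible because $(y_{(n_1,\ldots,n_k,n)})_{n=1}^\infty$ is weakly null in $X$ and hence in $E$. The resulting branch has $\|\cdot\|_q^w > c$, so Player II wins. Attainment is then obtained by picking $c_k \uparrow \textsf{t}_p(X)$ (or $c_k \to \infty$ if $\textsf{t}_p(X) = \infty$), associated separable $E_k$ with $\textsf{t}_p(E_k) > c_k$, and setting $E = \overline{\text{span}}(\bigcup_k E_k)$, which remains separable and satisfies $\textsf{t}_p(E) \geqslant \textsf{t}_p(E_k) > c_k$ for all $k$. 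For $\textsf{a}_p$ and $\textsf{n}_p$, which are themselves suprema over the depth $n$, a double diagonal (over both $n$ and the witnessing constant) is required. The main obstacle is the extraction step: one must check that a single recursive application of Proposition \ref{baba} simultaneously preserves weak nullity at every node while guaranteeing that infinite branches of the extracted tree lie along branches of the original tree, so as to inherit the bad estimate.
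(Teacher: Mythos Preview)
Your proposal is correct and follows essentially the same route as the paper: recursively apply Proposition~\ref{baba} to pull a countably-indexed weakly null subtree out of the $D^{<\omega}$-indexed one, take its closed linear span as the witnessing separable subspace, and obtain attainment by spanning countably many such witnesses. You do not explicitly treat the ``in particular'' conclusions for $\textsf{P}_p$ and $\textsf{D}_1$, but these are short additional arguments once the equality is established (for $\textsf{D}_1$ one needs a separate observation, since the equality itself was proved under the hypothesis $Sz(X)\leqslant\omega$).
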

	
	\begin{proof} It is clear that $\textsf{
			{t}}_p(X)\geqslant\sup \{\textsf{t}_p(E): E\leqslant X\text{\ is separable}\}$. If $c>\textsf{t}(X)$, then there exists a collection $(x_t)_{t\in D^{<\omega}}$ such that for each $\tau\in D^\omega$, $\|(x_{\tau|_i})_{i=1}^\infty\|_q^w>c$.    
		
		First, we build $\varphi:\nn^{<\omega}\to D^{<\omega}$ which preserves lengths and immediate predecessors such that $(x_{\varphi(t)})_{t\in \nn^{<\omega}}$ is weakly null.     We define $\varphi(t)$ by induction on $|t|$.   By Proposition \ref{baba} applied to $(x_{(U)})_{U\in D}$, there exists $f:\nn\to D$ such that $(x_{(f(n))})_{n=1}^\infty$ is weakly null.  Define $\varphi((n))=(f(n))$. Next, if $\varphi(t)$ has been defined, apply Proposition \ref{baba} to $(x_{\varphi(t)\smallfrown (U)})_{U\in D}$ to select $g:\nn\to D$ such that $(x_{\varphi(t)\smallfrown (g(n))})_{n=1}^\infty$ is weakly null.    Define $\varphi(t\smallfrown(n))=\varphi(t)\smallfrown (g(n))$.    This completes the construction. 
		
		Define $y_t=x_{\varphi(t)}$.   It follows that for any $\tau_1\in \nn^\omega$, there exists a unique $\tau\in D^\omega$ such that $\varphi(\tau_1|_i)=\tau|_i$ for all $i\in\nn$, so that \[\|(y_{\tau_1|_i})_{i=1}^\infty\|_q^w = \|(x_{\tau|_i})_{i=1}^\infty\|_q^w > c.\]  Therefore if $F$ is the closed linear span of $(y_t)_{t\in \nn^{<\omega}}$, then $\textsf{t}_p(F)>c$.  This shows that $\textsf{\emph{t}}_p(X)\leqslant\sup \{\textsf{\emph{t}}_p(E): E\leqslant X\text{\ is separable}\}$. Next, let $R$ denote the set of rational numbers $r$ such that $\textsf{t}_p(X)>r$. For each $r\in R$, let $F_r$ be a separable subspace of $X$ such that $\textsf{t}_p(F_r)>r$, and let $E$ be the closed span of $E_r$, $r\in R$. Then $\textsf{t}_p(E)=\textsf{t}_p(X)$, and the supremum is attained. The arguments for $\textsf{a}_p(X), \textsf{n}_p(X), \theta_n(X)$ are similar.


		If $X$ is a Banach space all of whose separable subspaces lie in $\textsf{T}_p\subset \textsf{D}_1$, then $\textsf{t}_p(X)=\sup\{\textsf{t}_p(E):E\leqslant X \text{\ is separable}\}$ must be finite. Indeed, if the supremum were infinite, then since it is attained, there would exist some separable $E\leqslant  X$ such that $\textsf{t}_p(E)=\infty$, and $E$ does not belong to $\textsf{T}_p$.   Similar arguments hold for $\textsf{A}_p$ and $\textsf{N}_p$.  
		
		For $\textsf{P}_p$, we note that \begin{align*} X\in \textsf{P}_p & \Leftrightarrow (\forall 1<r<p)(X\in \textsf{T}_r)  \\ & \Leftrightarrow (\forall 1<r<p)(\forall E\leqslant X\text{\ separable})(E\in \textsf{T}_r) \\ & \Leftrightarrow (\forall E\leqslant X\text{\ separable})(\forall 1<r<p)(E\in \textsf{T}_r)  \\ & \Leftrightarrow (\forall E\leqslant X\text{\ separable})(E\in\textsf{P}_p) \end{align*}  
		
		Assume now that $X$ is not in $\textsf{D}_1$. Then, for any  $p\in \Qdb \cap (1,\infty)$, $X$ does not belong to $\textsf{T}_p$. So for any $p\in \Qdb \cap (1,\infty)$, there exists a separable subspace $E_p$ of $X$ so that  $E_p$ is not in $\textsf{T}_p$. Then the closed linear span of these $E_p$'s is a separable subspace of $X$ which does not belong to $\textsf{D}_1$.
		
	\end{proof}

	\section{Three-space properties}
	
	\subsection{Introduction}
	
	We recall that a property $(P)$ of Banach spaces is a \emph{Three-Space Property} (3SP in short) if it passes to quotients and subspaces and a Banach space $X$ has $(P)$ whenever it admits a subspace $Y$ such that $Y$ and $X/Y$ have $(P)$.

	Note first that the properties considered in this paper pass easily to subspaces and quotients. 
	
	\begin{proposition}\label{stability} Fix $1<p\leqslant \infty$ and $X\in \textsf{\emph{Ban}}$. If $X$ is in any of the classes $\textsf{\emph{T}}_p, \textsf{\emph{A}}_p, \textsf{\emph{N}}_p$, or $\textsf{\emph{P}}_p$, then any subspace, quotient, or isomorph of $X$ lies in the same class. 
	\end{proposition}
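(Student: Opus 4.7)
The plan is to establish the three stability assertions---under isomorphisms, subspaces, and quotients---for each of the four classes by appealing to the equivalent descriptions from Section~\ref{relations}. Isomorphism invariance is immediate: the quantities $\textsf{t}_p(X), \textsf{a}_p(X), \textsf{n}_p(X), \theta_n(X)$ are all defined via weakly null trees and norm inequalities that scale by bounded multiplicative factors under an equivalent renorming, so membership in $\textsf{T}_p, \textsf{A}_p, \textsf{N}_p$ is an isomorphism invariant, and the same then holds for $\textsf{P}_p=\bigcap_{1<r<p}\textsf{T}_r$.

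For subspaces $Y\leqslant X$ I would argue directly from the game definitions. The weak topology of $Y$ is the restriction of the weak topology of $X$, so $\{V\cap Y:V\in D\}$ is a weak neighborhood base of $0$ in $Y$ whenever $D$ is one in $X$. Moreover, by the Hahn--Banach theorem,
$$\|(x_i)\|_{q,Y}^w=\|(x_i)\|_{q,X}^w\qquad \text{for every } (x_i)\subset Y.$$
Therefore, any winning strategy for Player I in the $T(c,p)$ game (respectively $A(c,p,n),N(c,p,n),\Theta(c,n)$) on $X$, precomposed with the map $V\mapsto V\cap Y$, is a winning strategy on $Y$: Player II's legal moves in $Y$ remain legal in $X$, and the winning condition is preserved under this identification. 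This gives $\textsf{t}_p(Y)\leqslant\textsf{t}_p(X)$ and the analogous inequalities for $\textsf{a}_p,\textsf{n}_p,\theta_n$, establishing subspace stability of the three primary classes and, by intersection, of $\textsf{P}_p$.

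For quotients I would pass to the dual characterizations. Identifying $(X/Y)^*$ isometrically with $Y^\perp\subset X^*$ under coinciding weak$^*$ topologies, the monotonicity $K\subset L\Rightarrow s_\ee(K)\subset s_\ee(L)$ iterates to
$$s_{\ee_1}\cdots s_{\ee_n}(B_{(X/Y)^*})\subset s_{\ee_1}\cdots s_{\ee_n}(B_{X^*})\qquad (\ee_1,\ldots,\ee_n>0),$$
and combined with the identity $(1-\delta)B_{X^*}\cap B_{Y^\perp}=(1-\delta)B_{Y^\perp}$, this at once transfers: (i) the $q$-summable Szlenk index (giving $\textsf{A}_p$ stability via Theorem~\ref{atheorem}); (ii) the Szlenk power type bound (giving $\textsf{P}_p$ stability via Theorem~\ref{ptheorem}); and (iii) after first renorming $X$ by a witnessing norm from Theorem~\ref{ttheorem}$(iv)$, the dual modulus estimate $\overline{\theta}^*_{|\cdot|}(\ee)\geqslant c\ee^q$ to the corresponding quotient norm on $X/Y$, yielding $\textsf{T}_p$ stability. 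For $\textsf{N}_p$, I would apply the same argument to each member of the $\sigma$-indexed family of norms provided by Theorem~\ref{ntheorem}$(iii)$, using Proposition~\ref{moduliduality} to convert between $\overline{\rho}$ and $\overline{\theta}^*$.

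No substantive obstacle arises; the proof is essentially routine once the Section~\ref{relations} characterizations are in hand. The only point requiring a little care is in the $\textsf{N}_p$ quotient argument, where one has a family of equivalent norms rather than a single renorming; but since the constants in Theorem~\ref{ntheorem} are uniform in the parameter $\sigma$, the dual-modulus transfer goes through uniformly over the family.
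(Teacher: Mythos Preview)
Your proposal is correct and follows essentially the same route as the paper's proof: the subspace and isomorph cases are handled directly from the game definitions, while the quotient case is reduced to the dual characterizations via the isometric, weak$^*$-weak$^*$ identification $(X/Y)^*\cong Y^\perp\subset X^*$ and the monotonicity of the Szlenk derivation. Your write-up is simply more detailed than the paper's two-sentence argument, and in particular you correctly flag and resolve the only nontrivial point---that for $\textsf{N}_p$ one must run the $\overline{\rho}\leftrightarrow\overline{\theta}^*$ transfer uniformly over the $\sigma$-indexed family of norms from Theorem~\ref{ntheorem}$(iii)$.
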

	
	\begin{proof} For subspaces and isomorphs, the result is clear. For quotients, the result follows easily from the dual characterizations of these properties, which clearly pass to weak$^*$-closed subspaces of $X^*$ (we recall that $(X/Y)^*$ is canonically isometric to $Y^\perp \subset X^*$ by a weak$^*$-weak$^*$-bi-continuous map).  
	\end{proof}
	
	The following lemma will allow us, when convenient, to reduce our questions to the separable setting. 
	
	\begin{lemma}\label{separable reduction}
		Let $\textsf{\emph{I}}$ be a class of Banach spaces which contains all subspaces, quotients, and isomorphs of its members. Suppose also that membership in $\textsf{\emph{I}}\cap \textsf{\emph{Sep}}$ is a $3$SP, and that if $X$ is a Banach space such that every separable subspace of $X$ lies in $\textsf{\emph{I}}$, then $X$ lies in $\textsf{\emph{I}}$.  Then membership in $\textsf{\emph{I}}$ is a $3$SP. 
	\end{lemma}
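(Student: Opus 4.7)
The plan is as follows. Since $\textsf{I}$ is already assumed closed under subspaces, quotients, and isomorphs, the only missing ingredient for the $3$SP is the upward implication: if $Y\leqslant X$ with $Y\in\textsf{I}$ and $X/Y\in\textsf{I}$, then $X\in\textsf{I}$. I would fix such a pair $(Y,X)$ and then invoke the separable-determination hypothesis to reduce the task to showing that every separable subspace $E\leqslant X$ lies in $\textsf{I}$. That is, I aim to produce, for each separable $E\leqslant X$, a separable subspace $Z\leqslant X$ containing $E$ with $Z\in\textsf{I}$; subspace closure of $\textsf{I}$ will then give $E\in\textsf{I}$.

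To place $Z$ in $\textsf{I}$ I intend to apply the hypothesized $3$SP for $\textsf{I}\cap\textsf{Sep}$ to the pair $(Z\cap Y,\,Z)$, which requires both $Z\cap Y$ and $Z/(Z\cap Y)$ to be separable members of $\textsf{I}$. The first is automatic: provided $Z$ is separable, $Z\cap Y$ is a separable subspace of $Y\in\textsf{I}$, and hence lies in $\textsf{I}\cap\textsf{Sep}$ by subspace closure. The delicate point is the second: the canonical map $Z/(Z\cap Y)\to X/Y$ induced by the quotient map $q\colon X\to X/Y$ is always injective and norm-decreasing, but without care need not be bounded below, so $Z/(Z\cap Y)$ need not be identifiable with a subspace of $X/Y$.

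To overcome this, I would carry out the standard iterative enlargement of $E$ that realizes the relevant quotient norms inside $Z\cap Y$. Set $E_0=E$ and, having built the separable space $E_k$, fix a countable dense subset of $E_k$; for each $z$ in that subset and each $n\in\nn$, choose $y\in Y$ with $\|z+y\|<\|q(z)\|+1/n$, and let $E_{k+1}$ be the closed linear span of $E_k$ together with all such chosen $y$. Setting $Z=\overline{\bigcup_k E_k}$ gives a separable subspace of $X$ containing $E$, and by construction, for every $z\in Z$ there exist $y_m\in Z\cap Y$ with $\|z+y_m\|\to\|q(z)\|$. This forces the induced map $Z/(Z\cap Y)\to X/Y$ to be an isometry onto its image, so $Z/(Z\cap Y)$ is isometric to a subspace of $X/Y\in\textsf{I}$ and therefore belongs to $\textsf{I}\cap\textsf{Sep}$.

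The main obstacle is precisely this inductive construction: one must simultaneously keep $Z$ separable and make $Z/(Z\cap Y)$ sit faithfully inside $X/Y$. Once $Z$ is produced, the $3$SP for $\textsf{I}\cap\textsf{Sep}$ delivers $Z\in\textsf{I}$, and subspace closure finishes the argument. I do not expect the passage from separable to general Banach spaces to introduce any further subtlety, as all the real content is absorbed into this separable bookkeeping step.
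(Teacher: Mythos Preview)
Your proposal is correct and follows essentially the same approach as the paper: reduce to showing every separable $E\leqslant X$ lies in $\textsf{I}$ by enlarging $E$ to a separable $Z$ so that $Z\cap Y$ and $Z/(Z\cap Y)$ sit inside $Y$ and $X/Y$ respectively, then apply the separable 3SP.

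The only difference is cosmetic. You perform an $\omega$-step iteration $E_0\subset E_1\subset\cdots$ to force $Z/(Z\cap Y)\hookrightarrow X/Y$ isometrically, which is the standard safe procedure. The paper does it in a single step: take a countable dense $S\subset E$, for each $x\in S$ fix a countable $R_x\subset Y$ realizing $\|x\|_{X/Y}$, let $F=\overline{\text{span}}\bigcup_x R_x$ and $G=\overline{\text{span}}(E\cup\bigcup_x R_x)$. One step suffices because $G=\overline{E+F}$, so every $g\in G$ is approximated by $e+f$ with $f\in F\subset Y$; since $f$ does not affect either quotient norm and the witnesses for $e$ are already in $F$ (via density of $S$), one gets $\|g+F\|=\|g+Y\|$ directly, and in particular $F=G\cap Y$. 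Your iteration is not wrong, just unnecessary here; it would be needed if the added elements could themselves require new witnesses, but since they all lie in $Y$ they contribute nothing to the quotient.
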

	
	\begin{proof} Let $X$ be a Banach space and suppose that $Y$ is a subspace of $X$ such that $Y, X/Y\in \textsf{I}$.   If $X$ is not in $\textsf{I}$, then there exists a separable subspace $E$ of $X$ such that $E$ is not in $\textsf{I}$.   Fix a countable, dense subset $S$ of $E$ and for each $x\in S$, fix a countable subset $R_x$ of $Y$ such that \[\|x\|_{X/Y}= \inf_{y\in R_x}\|x-y\|.\]   Let $G$ denote the closed linear span of \[E\cup \bigcup_{x\in S}R_x\] and let $F$ denote the closed linear span of $\bigcup_{x\in S}R_x$.    Then $F,G$ are separable and $F$, being a subspace of $Y$, lies in $\textsf{I}\cap \textsf{Sep}$.   Moreover, it follows from the construction of $G$ that $G/F$ is isometric to a subspace of $X/Y$, which means $G/F$ also lies in $\textsf{I}\cap \textsf{Sep}$. Therefore $G$ lies in $\textsf{I}$, as does $E\leqslant G$.   Therefore every separable subspace of $X$ lies in $\textsf{I}$, as does $X$. 
	\end{proof}
	
	\subsection{Past results}
	
	It was shown by Draga, Kochanek, and the first-named author in \cite{DKC} that membership in $\textsf{P}_p$ is a $3$SP, although it was not stated in this way. We isolate here a shorter and more direct argument.   We will show the following. 
	
	\begin{theorem}\label{cdk} Fix a Banach space $X$, a closed subspace $Y$ of $X$, and $1<p\leqslant \infty$.  
		\begin{enumerate}[(i)]
			\item If $ \textsf{\emph{n}}_p(Y)$ and $ \textsf{\emph{n}}_p(X/Y)$ are finite, then there exist constants $C, \lambda$ such that for all $2\leqslant n\in\nn$, $\textsf{\emph{n}}_p(X) \leqslant C (\log n)^\lambda$.  
			\item If $Y$ and $X/Y$ have $\textsf{\emph{P}}_p$, so does $X$. \end{enumerate}
	\end{theorem}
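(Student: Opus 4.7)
The strategy is to establish (i) by a dyadic-decomposition argument that iterates the winning-strategy characterizations in the $N$-game for $Y$ and $X/Y$ from Theorem \ref{ntheorem}, and to deduce (ii) from (i) via the $\textsf{P}_p$-characterization of Theorem \ref{ptheorem}. For (i), fix $c_Y>\textsf{n}_p(Y)$ and $c_Z>\textsf{n}_p(X/Y)$, and let $q:X\to Z:=X/Y$ be the quotient map. Given a weakly null tree $(x_t)_{t\in D^{\leqslant n}}\subset B_X$ with $n=2^m$, the image tree $(qx_t)_{t\in D^{\leqslant n}}$ is weakly null in $B_Z$. I would apply Player I's winning strategy in the $N(c_Z,p,\cdot)$-game on $Z$ across all $\log_2 n$ dyadic scales simultaneously, combined with Proposition \ref{prune1}-type pruning, to pass to a pruned subtree on which
$$\Bigl\|\sum_{i\in I} qx_{t|_i}\Bigr\|_Z\leqslant c_Z|I|^{1/p}$$
for every branch $t$ and every dyadic interval $I\subseteq [1,n]$. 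For each such $I$, select a lift $y_I(t)\in Y$ with $\|\sum_{i\in I} x_{t|_i}-y_I(t)\|\leqslant 2c_Z|I|^{1/p}$, which is possible by the definition of the quotient norm.

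At a chosen dyadic scale $k$ we then have the identity
$$\sum_{i=1}^n x_{t|_i}=\sum_{j=1}^{2^k} y_{I_{k,j}}(t)+\varepsilon_k(t),\qquad \|\varepsilon_k(t)\|\leqslant 2c_Z 2^{k/q} n^{1/p}.$$
To control the $Y$-sum, I would rescale each $y_{I_{k,j}}(t)$ by an a priori norm bound $R_k$ (obtained by induction on $n$, using $\|y_I(t)\|\leqslant \theta_{|I|}(X)+2c_Z|I|^{1/p}$), extract from this rescaled collection a subtree that is weakly null of type II in $B_Y$ via a further application of Corollary \ref{prune2}, and then invoke Player I's strategy in the $N(c_Y,p,2^k)$-game on $Y$. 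Choosing $k=k(n)$ adaptively to balance the quotient-approximation error $2c_Z 2^{k/q} n^{1/p}$ against the amplification from the $Y$-step yields a recursion of the shape $\theta_n(X)/n^{1/p}\leqslant \alpha\cdot \theta_{n/2^k}(X)/(n/2^k)^{1/p}+\beta(k)$, whose iteration produces the polylogarithmic bound $\theta_n(X)\leqslant C n^{1/p}(\log n)^\lambda$ for constants $C,\lambda$ depending only on $c_Y,c_Z,p$.

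For (ii), if $Y,X/Y\in\textsf{P}_p$, then by Theorem \ref{ptheorem} together with the inclusion $\textsf{T}_r\subseteq \textsf{N}_r$ from Theorem \ref{containments}, $\textsf{n}_r(Y)$ and $\textsf{n}_r(X/Y)$ are finite for every $1<r<p$. Applying part (i) with $r$ in place of $p$ gives $\theta_n(X)=O(n^{1/r}(\log n)^\lambda)=o(n^{1/r'})$ for any $r<r'<p$. Since this holds for every $1<r'<p$, the characterization in Theorem \ref{ptheorem}(iv) yields $X\in\textsf{P}_p$.

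The principal obstacle is the weakly-null-of-type-II extraction for the selections $y_I(t)\in Y$: these are lifts chosen via the quotient norm, not a priori weakly null along any branch-extending direction. One must prune carefully, using Corollary \ref{prune2} on the relevant totally bounded subsets of $B_Y$, so that as $t$ extends, the $y_{I_{k,j}}(t)$'s cluster around fixed elements which can then be subtracted off to recover a genuine weakly null collection in $B_Y$ to which Player I's strategy on $Y$ can be applied. Tracking how the approximation errors compound across the $\log n$ dyadic scales, and verifying that the total loss remains polylogarithmic rather than polynomial in $n$, is the technical heart of the argument.
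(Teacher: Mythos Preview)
Your argument for (ii) is essentially the paper's: apply (i) at an intermediate exponent $s\in(r,p)$ and use Theorem \ref{ptheorem}(iv). That part is fine.

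For (i), however, there are two genuine gaps.

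\textbf{The lifting problem.} You correctly identify as the principal obstacle that the quotient-norm lifts $y_I(t)\in Y$ need not form a weakly null collection, but your proposed fix via Corollary \ref{prune2} cannot work: that corollary requires the target to be \emph{totally bounded}, and $B_Y$ is not (unless $Y$ is finite-dimensional). There is no pruning of the tree that will force arbitrary lifts to cluster in norm. The paper handles this with a different tool, Claim \ref{triv_claim}: if $x$ lies in a small weak neighborhood of $0$ in $X$, has bounded norm, and has small quotient norm, then one can choose a lift $y\in Y$ which is \emph{itself} in a prescribed weak neighborhood of $0$, with $\|x-y\|$ controlled by the quotient norm. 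This is proved by a convexity/averaging argument through $X^{**}$, not by stabilization. With this in hand, one can interleave the choice of weak neighborhoods so that the resulting $y_i$'s are legitimate moves for Player II in the $\Theta$-game on $Y$.

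\textbf{The recursion does not give polylogarithmic growth.} Even granting the lifting step, your bookkeeping loses too much. You bound the error term $\varepsilon_k(t)$ by the triangle inequality over the $2^k$ blocks, producing the factor $2^{k/q}$; and your multiplicative constant $\alpha$ in the recursion is essentially $c_Y$, a fixed number strictly larger than $1$. A recursion of the form $\textsf{n}_{p,n}(X)\leqslant \alpha\,\textsf{n}_{p,n/2^k}(X)+\beta(k)$ with $\alpha>1$ fixed needs $\sim(\log n)/k$ iterations to reach $n=1$, yielding growth $\alpha^{(\log n)/k}$, which is polynomial in $n$; letting $k$ grow does not help because $\beta(k)\sim 2^{k/q}$ then blows up. The paper avoids this by running a \emph{fourth} game: in addition to the $\Theta$-games on $X/Y$ and on $Y$, it plays the $\Theta$-game on $X$ against the (rescaled) error vectors $(y_i-\text{inner sum}_i)$ and also against the inner sums themselves. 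This replaces your triangle-inequality loss by a factor $\theta_n(X)$ and yields the Enflo--Lindenstrauss--Pisier type inequality of Lemma \ref{cdk2},
\[
\theta_{mn}(X)\leqslant 6\bigl(\theta_m(X/Y)\,\theta_n(X)+\theta_m(X)\,\theta_n(Y)\bigr),
\]
whence, with $m=n$, $\textsf{n}_{p,n^2}(X)\leqslant c\,\textsf{n}_{p,n}(X)$ for $c=c(\textsf{n}_p(Y),\textsf{n}_p(X/Y))$. This squaring recursion needs only $\log\log n$ iterations and gives the claimed $(\log n)^\lambda$ bound. Your multi-scale dyadic scheme is not needed; a single two-level (inner/outer) decomposition suffices once the lifting claim and the extra $X$-game are in place.
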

	
	The fact that $\textsf{P}_p$ is a 3SP was shown in Theorem $7.5$ of \cite{DKC}. The proof there established an inequality similar to Theorem \ref{cdk}$(i)$, but using  $\textsf{a}_p$ rather than $\textsf{n}_p$. In fact, the argument there was given for asymptotic Rademacher type $p$, which deals with Rademacher averages of arbitrary linear combinations of the branches of weakly null trees, which added significant technicality to the proof.  Because $\textsf{n}_p$ deals  only with flat linear combinations, we sketch the simpler proof below.   
	
	\medskip We will use the following, which is an analogue of a lemma of Enflo, Lindenstrauss, and Pisier in their solution of the Palais problem \cite{ELP}. 
	
	\begin{lemma}\label{cdk2}
		For any Banach space $X$, any closed subspace $Y$ of $X$, and any $m,n\in\nn$, \[\theta_{mn}(X)\leqslant 6\big(\theta_m(X/Y)\theta_n(X)+\theta_m(X)\theta_n(Y)\big).\] 
	\end{lemma}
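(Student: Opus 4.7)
The plan is to adapt the Enflo--Lindenstrauss--Pisier averaging template \cite{ELP} to the two-player $\Theta$-game. Using the tree characterization of $\theta_{mn}(X)$, it suffices to exhibit, for every weakly null tree $(x_t)_{t\in D^{\le mn}}\subset B_X$ and every $\varepsilon>0$, a branch along which the total sum $S$ has $X$-norm at most $6\bigl(\theta_m(X/Y)\theta_n(X)+\theta_m(X)\theta_n(Y)\bigr)+O(\varepsilon)$. Fix Player~I's winning strategies $\sigma_A,\sigma_B,\sigma_C,\sigma_D$ in the four auxiliary games $\Theta(\theta_m(X/Y)+\varepsilon,m)$ on $X/Y$, $\Theta(\theta_n(X)+\varepsilon,n)$ on $X$, $\Theta(\theta_m(X)+\varepsilon,m)$ on $X$, and $\Theta(\theta_n(Y)+\varepsilon,n)$ on $Y$. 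View the $mn$ turns as $m$ blocks of length $n$, with block sums $s_i=\sum_{j=1}^n x_{t|_{(i-1)n+j}}$. For each node $t$, select a near-optimal projection $\pi_t\in Y$ with $\|x_t-\pi_t\|_X\le\|q(x_t)\|_{X/Y}+\delta_t$ for a pre-chosen summable sequence $(\delta_t)$, so that $\|\pi_t\|\le 2+\delta_t$ and $\|x_t-\pi_t\|\le 1+\delta_t$. Writing $\alpha_i=\sum_{j=1}^n \pi_{t|_{(i-1)n+j}}$, we obtain the decomposition $S=\sum_i\alpha_i+\sum_i(s_i-\alpha_i)$, with $\sum_i\alpha_i\in Y$ and $q\bigl(\sum_i(s_i-\alpha_i)\bigr)=q(S)$.

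I would then prune the tree iteratively, via Corollary~\ref{prune2} applied level by level, to simultaneously ensure along the selected branch that: (i) within each block, the vectors $x_{t|_k}$ play admissibly against $\sigma_B$, so that $\|s_i\|_X\le \theta_n(X)+O(\varepsilon)$; (ii) within each block, the projected vectors $\pi_{t|_k}$ play admissibly against $\sigma_D$, so that $\|\alpha_i\|_Y\le 2\theta_n(Y)+O(\varepsilon)$; (iii) at the outer level, the normalized block vectors $q(s_i)/(\theta_n(X)+\varepsilon)\in B_{X/Y}$ play admissibly against $\sigma_A$, and the normalized $Y$-block sums $\alpha_i/(2\theta_n(Y)+\varepsilon)\in B_Y\subset B_X$ play admissibly against $\sigma_C$. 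These combine to give
\[
\|q(S)\|_{X/Y}\le \theta_m(X/Y)\theta_n(X)+O(\varepsilon) \qquad \text{and} \qquad \Bigl\|\sum_i\alpha_i\Bigr\|_X\le 2\theta_m(X)\theta_n(Y)+O(\varepsilon).
\]
From the first inequality there exists $\tilde y\in Y$ with $\|\sum_i(s_i-\alpha_i)-\tilde y\|_X\le \theta_m(X/Y)\theta_n(X)+O(\varepsilon)$, and the triangle inequality then yields $\|S\|_X \le \|\sum_i\alpha_i+\tilde y\|_X + \theta_m(X/Y)\theta_n(X)+O(\varepsilon)$. Since $\sum_i\alpha_i+\tilde y\in Y$, bounding $\|\tilde y\|_X$ via the triangle inequality with $\mathcal Z=\sum_i(s_i-\alpha_i)$ as a pivot gives $\|\tilde y\|_X\le 2\|\sum_i\alpha_i\|_X+O(\varepsilon)\le 4\theta_m(X)\theta_n(Y)+O(\varepsilon)$, using the bound on $\|\mathcal Z\|_X$ obtainable from applying $\sigma_C$ to the block vectors $s_i-\alpha_i$ (whose $X$-norms are controlled by (i) and (ii)). A final triangle inequality delivers the required estimate, with the factor $6$ absorbing all of the accumulated constants as $\varepsilon\downarrow 0$.

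The main obstacle is the simultaneous pruning to satisfy condition (ii): the near-optimal $Y$-projection $\pi$ is not weakly continuous, so a priori $(\pi_{t\smallfrown(U)})_{U\in D}$ need not be weakly null in $Y$. At each node one must identify a weak cluster point of this net in $Y$ and absorb it into a redefinition of $\pi$ on deeper levels, incurring only the summable slack $\delta_t$; this uses that the bounded set $\{y\in Y:\|y\|\le 2+\delta_t\}$ admits weak cluster points along the filter of $D$, combined with a small perturbation argument. Carrying out this stabilization uniformly along the tree while keeping the four strategies $\sigma_A,\sigma_B,\sigma_C,\sigma_D$ simultaneously active --- by repeatedly applying Corollary~\ref{prune2} to the finite intersection of the weak neighborhoods each strategy dictates --- is the central technical task, and the constant $6$ (rather than a smaller constant) reflects the slack absorbed in this bookkeeping.
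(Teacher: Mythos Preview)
There is a genuine gap in the final bookkeeping that cannot be repaired within your setup. You obtain two estimates:
\[
\Bigl\|\sum_i\alpha_i\Bigr\|_X\le 2\theta_m(X)\theta_n(Y)+O(\varepsilon)
\qquad\text{and}\qquad
\|q(S)\|_{X/Y}\le \theta_m(X/Y)\theta_n(X)+O(\varepsilon),
\]
and you then write $S=\sum_i\alpha_i+\mathcal Z$. But these two pieces of information do \emph{not} bound $\|S\|_X$: the quotient estimate only produces some $\tilde y\in Y$ with $\|\mathcal Z-\tilde y\|$ small, and there is no a priori relation between $\tilde y$ and $\sum_i\alpha_i$. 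Your proposed fix is to bound $\|\mathcal Z\|_X$ by applying $\sigma_C$ to $(s_i-\alpha_i)$; however $\|s_i-\alpha_i\|\le\theta_n(X)+2\theta_n(Y)$, so this yields
\[
\|\mathcal Z\|_X\le \theta_m(X)\bigl(\theta_n(X)+2\theta_n(Y)\bigr)+O(\varepsilon),
\]
and the term $\theta_m(X)\theta_n(X)$ appears. This term is \emph{not} dominated by the right-hand side of the lemma (indeed by submultiplicativity $\theta_{mn}(X)\le\theta_m(X)\theta_n(X)$, so an inequality of the form $\theta_{mn}(X)\le C\theta_m(X)\theta_n(X)+\ldots$ is vacuous). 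The claimed inequality $\|\tilde y\|_X\le 2\|\sum_i\alpha_i\|_X$ is therefore unjustified and in general false.

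The paper's argument avoids this by reversing your inner/outer roles and, crucially, by playing the $X/Y$-game on the \emph{inner} blocks rather than the outer one. The inner $\Theta(\psi,m)$ game on $X/Y$ forces each block sum $s_i$ to satisfy $\|s_i\|_{X/Y}\le\psi\approx\theta_m(X/Y)$, and then Claim~\ref{triv_claim} produces a single $y_i\in Y$ with $\|s_i-y_i\|\le 6\psi$ \emph{and} $y_i$ lying in any prescribed weak neighborhood. This last point is exactly the ``stabilization'' you identify as the main obstacle, but applied once per block to the block sum, not termwise. The resulting decomposition $S=\sum_i y_i+\sum_i(s_i-y_i)$ has both pieces directly controlled by outer games (on $Y$ and on $X$ respectively), and no spurious $\theta_m(X)\theta_n(X)$ term arises. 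Your termwise projections $\pi_t$ give no control on $\|s_i-\alpha_i\|$ beyond the trivial $\theta_n(X)+2\theta_n(Y)$, which is why the argument breaks.
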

	
	Let us first deduce Theorem \ref{cdk} from Lemma \ref{cdk2}. 
	
	\begin{proof}[Proof of Theorem \ref{cdk}] 
		$(i)$ By Lemma \ref{cdk2}, for any $n\in\nn$, \begin{align*}\textsf{n}_{p,n^2}(X)& =\frac{\theta_{n^2}(X)}{n^{2/p}} \leqslant 6\Bigl(\frac{\theta_n(X/Y)}{n^{1/p}}+\frac{\theta_n(Y)}{n^{1/p}}\Bigr)\frac{\theta_n(X)}{n^{1/p}} \\ & \leqslant c \textsf{n}_{p,n}(X),\end{align*} where $c=10(\textsf{n}_p(X/Y)+\textsf{n}_p(Y))$.    We argue as in  Theorem $3$ of  \cite{ELP} to deduce the existence of the constants $C$ and $\lambda$. 
		
		$(ii)$ Assume $Y,X/Y$ have $\textsf{P}_p$. Fix $1<r<s<p$.    Since $Y,X/Y$ have $\textsf{P}_p$, they also have $\textsf{N}_s$, which means there exist constants $C,\lambda$ such that for all $2\leqslant n$, $\theta_n(X)=\textsf{n}_{s,n}(X)n^{1/s} \leqslant C (\log n)^\lambda n^{1/s}$.  Then $\textsf{n}_{r,n}(X) =\theta_n(X)n^{-1/r} \leqslant C (\log n)^\lambda n^{1/s-1/r}$, which vanishes as $n$ tends to infinity. Therefore, for $1<r<p$, $\textsf{n}_r(X)<\infty$, so $X\in \textsf{P}_p=\bigcap_{1<r<p}\textsf{N}_r$. 
		
	\end{proof}
	
	We next recall an easy technical piece which we will need for the proof of Lemma \ref{cdk2}. 
	
	\begin{claim} Let $X$ be a Banach space and $Y$ a closed subspace. For any weak neighborhood $U_1$ of $0$ in $X$ and $R, \delta>0$, there exists a weak neighborhood $U_2$ of $0$ in $X$ such that if $x\in U_2\cap RB_X$ with $\|x\|_{X/Y}<\delta$, then there exists $y\in U_1\cap  RB_Y $ such that $\|x-y\|<6\delta$. 
		\label{triv_claim}
	\end{claim}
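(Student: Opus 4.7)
My plan is to argue by contradiction, using a weak-$*$ compactness argument in the bidual. Suppose the claim fails: then for every weak neighborhood $U$ of $0$ in $X$, there exists $x_U \in U \cap RB_X$ with $\|x_U\|_{X/Y} < \delta$ for which no $y \in U_1 \cap RB_Y$ satisfies $\|x_U - y\| < 6\delta$. Indexing over weak neighborhoods of $0$ directed by reverse inclusion, the net $(x_U)$ is weakly null in $RB_X$. For each $U$, I select $y_U \in Y$ with $\|x_U - y_U\| < \delta$, so $\|y_U\| < R + \delta$, and write $U_1 = \{x \in X : |\phi_i(x)| < \epsilon, \ i = 1, \ldots, n\}$ for suitable $\phi_1, \ldots, \phi_n \in X^*$ and $\epsilon > 0$.

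Viewing $(y_U)$ in $X^{**}$, which lies in a weak-$*$-compact ball, I pass to a subnet so that $y_U \to y^{**}$ in the weak-$*$ topology of $X^{**}$. Since $x_U \to 0$ weakly in $X$ and $\|y_U - x_U\| < \delta$, lower semicontinuity of the norm in the weak-$*$ topology gives $\|y^{**}\| \leq \delta$. As each $y_U \in Y$, we have $y^{**} \in Y^{\perp\perp}$, canonically isometrically identified with $Y^{**}$.

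The next step is to transport $y^{**}$ back into $Y$ itself while controlling the defining functionals of $U_1$. By Goldstine's theorem applied to $Y$, there is a net $(\tilde y_\beta) \subset \delta B_Y$ with $\tilde y_\beta \to y^{**}$ in the weak-$*$ topology of $Y^{**}$; in particular $\phi_i(\tilde y_\beta) \to \langle y^{**}, \phi_i \rangle$ for each $i$. I choose $\beta$ so that these errors are all less than $\epsilon/3$, and set $z := \tilde y_\beta \in \delta B_Y$. Then for $U$ far enough along the subnet, $|\phi_i(y_U - z)| < \epsilon$, so $y_U - z \in U_1$. Its norm is at most $R + 2\delta$, so rescaling by a factor $\leq 1$ produces $w \in U_1 \cap RB_Y$ with $\|(y_U - z) - w\| \leq 2\delta$. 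The triangle inequality yields
\[\|x_U - w\| \leq \|x_U - y_U\| + \|z\| + \|(y_U - z) - w\| < \delta + \delta + 2\delta = 4\delta,\]
contradicting $\|x_U - y\| \geq 6\delta$ for every $y \in U_1 \cap RB_Y$.

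The main obstacle is that $(y_U)$, being only bounded in a possibly non-reflexive Banach space, has no reason to possess a weakly convergent subnet in $Y$; this is circumvented by passing to $X^{**}$ for weak-$*$ compactness and then invoking Goldstine's theorem (alternatively, the Principle of Local Reflexivity) to pull a concrete approximant of the limit back into $Y$. The constant $6$ is comfortable slack: the argument in fact produces an approximant within $4\delta$.
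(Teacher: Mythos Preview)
Your proof is correct and follows the same overall contradiction scheme as the paper's: build a weakly null net $(x_U)\subset RB_X$ with $\|x_U\|_{X/Y}<\delta$, choose $y_U\in Y$ with $\|x_U-y_U\|<\delta$, and pass to a weak$^*$-limit $y^{**}\in X^{**}$ of a subnet of $(y_U)$.

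The point of departure is how the approximant in $Y$ is produced. You observe that $\|y^{**}\|\leqslant\delta$ (by weak$^*$-lower semicontinuity, since $y_U-x_U\to y^{**}$ weak$^*$ and $\|y_U-x_U\|<\delta$), identify $y^{**}\in Y^{\perp\perp}\cong Y^{**}$, and invoke Goldstine to pick $z\in\delta B_Y$ matching $y^{**}$ on the finitely many functionals defining $U_1$; then $y_U-z$ does the job. The paper instead avoids Goldstine and works entirely with the given net: it uses that $(x_\lambda)$ is weakly null to select, Mazur-style, a finite convex combination $\sum_{\lambda\in G} w_\lambda x_\lambda$ of norm less than $\delta$ with all indices far enough along that $x^*(y_\lambda)\approx y^{**}(x^*)$ on the relevant functionals, and takes $y_1=y_{\lambda_1}-\sum_{\lambda\in G} w_\lambda y_\lambda$ as the approximant. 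Your route is a bit more conceptual and, as you note, yields the sharper bound $4\delta$ rather than $6\delta$; the paper's route is more self-contained, needing only Mazur-type averaging rather than an appeal to the bidual density theorem.

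One small point worth making explicit: your rescaling step uses that $U_1$ is star-shaped about $0$, which follows from your reduction to $U_1=\{x:|\phi_i(x)|<\varepsilon,\ i\leqslant n\}$; this is harmless since any weak neighborhood of $0$ contains such a basic one, but it would be good to say so.
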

	
	\begin{proof} If it were not so, then for some weak neighborhood $U_1$ of $0$ in $X$ and some $R, \delta>0$, there would exist a weakly null net $(x_\lambda)\subset RB_X$ such that, for all $\lambda$, $\|x_\lambda\|_{X/Y}<\delta$  and for all $y\in U_1\cap RB_Y$, $\|x_\lambda-y\|\geqslant 6\delta$.  For each $\lambda$, we can fix $y_\lambda\in Y$ such that $\|x_\lambda-y_\lambda\|<\delta$.   By passing to a subnet and relabeling, we can assume $(y_\lambda)$ is weak$^*$-convergent to some $y^{**}\in B_{X^{**}}$.  Fix $\ee>0$ and a finite subset $F$ of $X^*$ such that $V:=\{x\in X:(\forall x^*\in F)(|x^*(x)|<2\ee)\}\subset U_1$.    Since $(x_\lambda)$ is weakly null and $(y_\lambda)$ is weak$^*$-convergent to $y^{**}$, we can find $\lambda_1$,  a finite subset $G$ of the index set of $(x_\lambda)$, and positive numbers $(w_\lambda)_{\lambda\in G}$ summing to $1$ such that \begin{enumerate}[(i)]\item for all $x^*\in F$ and $\lambda\in \{\lambda_1\}\cup G$, $|y^{**}(x^*)-x^*(y_{\lambda})|<\ee$, \item $\|\sum_{\lambda\in G}w_\lambda x_\lambda\| < \delta$. \end{enumerate} Let $y_1=y_{\lambda_1}-\sum_{\lambda\in G}w_\lambda y_\lambda\in V$ and note that \begin{align*} \|y_1-x_{\lambda_1}\|& \leqslant \|y_{\lambda_1}-x_{\lambda_1}\| + \sum_{\lambda\in G}w_\lambda \|y_\lambda-x_\lambda\| + \|\sum_{\lambda\in G}w_\lambda x_\lambda\| < 3\delta.\end{align*}   Since $\|x_{\lambda_1}\| \leqslant R$, $\|y_1\|\leqslant R+3\delta$. If $\|y_1\|\leqslant R$, let $y=y_1$, and otherwise let $y=\frac{R}{\|y_1\|}y_1$, noting that $\|y-x\|\leqslant \|y-y_1\|+\|y_1-x\|< 6\delta$. By convexity of $V$, $y\in V\subset U_1$, and we reach a contradiction.

	\end{proof}
	
	Let us now sketch the proof of Lemma \ref{cdk2}. 
	
	\begin{proof}[Sketch] If $Y$ is finite dimensional, then $\theta_n(Y)=0$ and $\theta_n(X/Y)=\theta_n(X)$ for all $n\in\nn$. Then the inequality follows, without the factor of $6$, using submultiplicativity of $\theta_n(X)$.  A similar conclusion holds if $X/Y$ is finite dimensional. We can therefore assume $Y,X/Y$ are infinite dimensional, and $\theta_n(Y), \theta_n(X/Y)\geq 1$ for all $n\in\nn$.    
		
		Of course, the idea is to consider a weakly null tree indexed by $D^{mn}$ to consist of inner trees of height $m$, and outer trees of height $n$. For $\psi> \theta_m(X/Y)$, $\psi_1>\theta_m(X)$, $\phi>\theta_n(Y)$, and $\phi_1>\theta_n(X)$, we can fix winning strategies $\chi, \chi_1, \varpi$, and $\varpi_1$ for Player I in each of the games $\Theta(\psi,m)$ on $X/Y$, $\Theta(\psi_1,m)$ on $X$, $\Theta(\phi,n)$ on $Y$, and $\Theta(\phi_1,n)$ on $X$, respectively.   For a weakly null collection $(x_t)_{t\in D^{\leqslant mn}}\subset B_X$, we claim that we can recursively select $t_1\in D^m$, $y_1\in Y$, $t_2\in D^{2m}$ such that $t_1\prec t_2$, $y_2\in Y$, $\ldots$, such that, with $t=t_n\in D^{mn}$, for all $1\leqslant i\leqslant n$,  
		\begin{enumerate}[(i)]
			\item  $\|y_i-\sum_{j=(i-1)m+1}^{im} x_{t|_i}\| \leqslant 6\psi$,
			\item $\|\sum_{j=(i-1)m+1}^{im} x_{t|_i}\| \leqslant \psi_1$, 
			\item $\|\sum_{i=1}^n \frac{y_i}{10\psi_1}\|\leqslant \phi$, 
			\item $\|\sum_{i=1}^n \frac{ y_i - \sum_{j=(i-1)m+1}^{im} x_{t|_j}}{10\psi}\| \leqslant \phi_1$. 
		\end{enumerate}
		Then \begin{align*} \Bigl\|\sum_{i=1}^{mn}x_{t|_i}\Bigr\| & \leqslant  \Bigl\|\sum_{i=1}^m \bigl[y_i-\sum_{j=(i-1)m+1}^{im} x_{t|_j}\bigr]\Bigr\| + \Bigl\|\sum_{i=1}^n y_i\Bigr\|\\ & \leqslant 6 \psi\phi_1 + 6 \psi_1\phi. 
		\end{align*}  
		Since $\psi>\theta_m(X/Y)$, $\psi_1>\theta_m(X)$, $\phi>\theta_n(Y)$, and $\phi_1>\theta_n(X)$ were arbitrary, this will yield the inequality.  
		
		We now explain how to choose $t_i$ and $y_i$. Assume that for some $k<n$,  we have already chosen $t_1 \prec \ldots \prec t_k$, $t_i\in D^{im}$, and $y_1, \ldots, y_k$.  Assume also that $(y_i/10\psi_1)_{i=1}^k$ and $((y_i-\sum_{j=(i-1)m+1}^{im} x_{t_k|_j})/10\psi)_{i=1}^k$ have been chosen by Player II against Player I, who is using strategies $\varpi$ and $\varpi_1$, respectively.    Let $U, U_1$ be the weak neighborhoods chosen for the next stage of the game by strategies $\varpi$ and $\varpi_1$, respectively.  By replacing these sets with subsets if necessary, we can assume they are convex. 
		
		By Claim \ref{triv_claim}, there exists a weak neighborhood $W$ of $0$ in $X$, which we can also assume is convex,  such that if $x\in W\cap \psi_1 B_X$ satisfies $\|x\|_{X/Y}\leqslant \psi$, then there exists $y\in U\cap \frac{1}{2}U_1\cap \psi_1 B_Y$ such that $\|y-x\|\leqslant 6\psi$.   Let $Q:X\to X/Y$ denote the quotient map and, using the strategies $\chi$ and $\chi_1$, choose $t_k\prec s_1\prec \ldots \prec s_m=t_{k+1}\in D^{(k+1)m}$ such that for each $1\leqslant j\leqslant m$, \[x_{s_j} \in G_j\cap Q^{-1}(H_j) \cap \frac{1}{m} W \cap \frac{1}{2m}U_1.\]    Here, the sets $H_j$ are determined by $\chi$ playing against Player II's choices of $x_{s_1}+X/Y, \ldots, x_{s_m}+X/Y$ and the sets $G_j$ are determined by $\chi_1$ playing against Player II's choices of $x_{s_1}$, $\ldots$, $x_{s_m}$.  Note that $G_j\cap Q^{-1}(H_j)$ is a weak neighborhood of $0$ in $X$.  Playing according to $\chi$ and $\chi_1$ guarantees that 
		\[\Big\|\sum_{j=km+1}^{(k+1)m} x_{t_{k+1}|_j}\Big\|=\Big\|\sum_{j=1}^m x_{s_j}\Big\|\leqslant \psi_1\] 
		and 
		\[\Big\|\sum_{j=km+1}^{(k+1)m} x_{t_{k+1}|_j}\Big\|_{X/Y}=\Big\|\sum_{j=1}^m x_{s_j}\Big\|_{X/Y}\leqslant \psi.\]  
		Since $\sum_{j=1}^m x_{s_j}\in \frac{1}{m}V+\ldots + \frac{1}{m}V=V$, there exists $y_{k+1}\in U\cap \frac{1}{2}U_1\cap \psi_1 B_Y$ such that 
		$$\Big\|y_{k+1}-\sum_{j=1}^m x_{s|_j}\Big\| \leqslant 6 \psi.$$  
		Note also that  $y_{k+1}-\sum_{j=1}^m x_{s_j}\in \frac{1}{2}U_1 + \frac{1}{2m}U_1+\ldots + \frac{1}{2m}U_1=U_1$.     Therefore 
		$$\frac{y_{k+1}}{\phi_1} \in U\cap B_Y\ \ \text{and}\ \ \frac{y_{k+1}-\sum_{j=km+1}^{(k+1)m}x_{t_{k+1}|_j}}{10\psi} \in U_1\cap B_X$$ have been chosen in accordance with the winning strategies $\varpi$ and $\varpi_1$, respectively.  This completes the recursive choices. Items $(i)$ and $(ii)$ are seen to be satisfied from the construction, while items $(iii)$ and $(iv)$ follow from the fact that the outer sequences were chosen according to $\chi$ and $\chi_1$.

	\end{proof}

	\subsection{A counterexample}
	
	For $p\in (1,\infty)$, contrary to $\textsf{P}_p$, none of the properties $\textsf{T}_p$, $\textsf{A}_p$, $\textsf{N}_p$ is a three-space property.
	Before proving this result by giving a counterexample, let us introduce the following definition.
	
	\begin{definition}
		Let $X$ be a Banach space and $p \in (1,\infty )$. We say that $X$ has the \textit{weak $p$-Banach-Saks property} if there exists a positive constant $C$ such that for every weakly null sequence $(x_n)$ in $B_X$ and every $k \in \nn$, we can find a subsequence $(x_{n_j})_j$ of $(x_n)$ such that
		\[ \Big\| \sum_{j=1}^k x_{n_j} \Big\| \leq Ck^{1/p} \]
		for all $n_1 < \cdots < n_k$. 
	\end{definition}
	
	Let us notice that every Banach space with property $\textsf{N}_p$, $1 < p < \infty$, has the weak $p$-Banach-Saks property. For instance, use item $(iii)$ of Theorem \ref{ntheorem} and mimic the argument of $(iii)\Rightarrow (ii)$ in the proof of that statement.
	
	\begin{proposition}\label{counterexample} Let $p\in (1,\infty)$. Then the properties $\textsf{\emph{T}}_p$, $\textsf{\emph{A}}_p$, and $\textsf{\emph{N}}_p$ are not three space properties. 
	\end{proposition}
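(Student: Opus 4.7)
The strategy is to exhibit a single Banach space $X$ together with a closed subspace $Y$ for which $Y, X/Y \in \textsf{T}_p$ while $X \notin \textsf{N}_p$. By the inclusions $\textsf{T}_p \subset \textsf{A}_p \subset \textsf{N}_p$ of Theorem~\ref{containments}, one such pair $(X,Y)$ suffices to refute three-space simultaneously for all three classes: both $Y$ and $X/Y$ belong to each of the three classes, yet $X$ belongs to none.

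The mechanism for ruling out $X \in \textsf{N}_p$ is the weak $p$-Banach-Saks property just introduced: every member of $\textsf{N}_p$ satisfies it, as recorded right before the statement (using item $(iii)$ of Theorem~\ref{ntheorem} and mimicking the implication $(iii)\Rightarrow(ii)$ of its proof). So it is enough to find $X$ \emph{failing} weak $p$-Banach-Saks while sandwiched in an exact sequence with $\textsf{T}_p$ extremes.

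The natural source is a nontrivial twisted sum
\[
0 \longrightarrow \ell_p \longrightarrow X \longrightarrow \ell_p \longrightarrow 0,
\]
since both outer terms are $p$-AUS, hence in $\textsf{T}_p$ by Theorem~\ref{ttheorem}. A concrete candidate is the Kalton--Peck space $Z_p$: taking $X = Z_p$ and $Y$ its canonical $\ell_p$-subspace yields $X/Y \cong \ell_p$, so both $Y$ and $X/Y$ lie in $\textsf{T}_p$. The canonical basis $(e_n)$ of $Z_p$ is normalized and weakly null, and a direct computation with the Kalton--Peck quasi-linear map gives
\[
\|e_1 + \cdots + e_k\|_{Z_p} \;\gtrsim\; k^{1/p}\log k,
\]
with the same estimate persisting along every subsequence. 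Hence $X$ fails weak $p$-Banach-Saks, so $X \notin \textsf{N}_p$, and therefore $X \notin \textsf{A}_p$ and $X \notin \textsf{T}_p$ as well.

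The main obstacle is the subsequence-robustness of the lower bound on the partial sums: one must show that no thinning of $(e_n)$ recovers the $\ell_p$ rate $k^{1/p}$. This is the technical heart of the proof and reflects the fact that the logarithmic defect in $Z_p$ is intrinsic to the twisted-sum structure rather than a feature of any particular enumeration of the basis. Once this robustness is established, the verification that $Y$ and $X/Y$ belong to $\textsf{T}_p$ is immediate, and the proposition follows.
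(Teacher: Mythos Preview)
Your proposal is correct and follows essentially the same route as the paper: use the Kalton--Peck twisted sum $Z_p$ of $\ell_p$ by $\ell_p$, note that both extremes lie in $\textsf{T}_p$, and show $Z_p\notin\textsf{N}_p$ via the failure of the weak $p$-Banach--Saks property. The only remark worth making is that the ``subsequence-robustness'' you flag as the technical heart is in fact immediate: the Kalton--Peck norm of $\sum_{j=1}^k u_{n_j}$ depends only on $k$ and not on the particular indices (by the coordinate symmetry of the construction), so the logarithmic lower bound holds uniformly over all subsequences with no extra work; the paper simply cites \cite{livre-3SP} for this fact and gives the explicit formula in the proof of the next proposition.
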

	
	\begin{proof} 
		Let us consider the Kalton-Peck reflexive spaces $Z_p$ (see \cite{KP} or \cite{livre-3SP}), that satisfies the following: $Z_p$ may be normed in such a way that it has a closed subspace $M$ isometric to $\ell_p$ with $Z_p / M$ also isometric to $\ell_p$. It is known that $Z_p$ does not have the weak $p$-Banach-Saks property (see  \cite{livre-3SP}). Hence, since $\ell_p$ has property $\textsf{T}_p$, we get the result by combining the previous remark and item $(ii)$ of Theorem \ref{containments}.
	\end{proof}
	
	In fact, we can even prove that $Z_p$ does not have any of the concentration properties indexed by $p$ considered in \cite{fov} and therefore deduce the following. 
	
	\begin{proposition} \label{propconc}
		Let $p \in (1, \infty)$. Then the properties HFC$_p$, HIC$_p$, HC$_p$, HFC$_{p,d}$, and HC$_{p,d}$ introduced in \cite{fov} are not three spaces properties.
	\end{proposition}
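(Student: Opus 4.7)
The plan is to re-use the Kalton-Peck space $Z_p$ from the proof of Proposition \ref{counterexample}. Recall that $Z_p$ admits a closed subspace $M$ isometric to $\ell_p$ such that $Z_p/M$ is also isometric to $\ell_p$. Since $\ell_p \in \textsf{T}_p \subset \textsf{N}_p$, the space $\ell_p$ satisfies each of the concentration properties $\textsf{HFC}_p$, $\textsf{HIC}_p$, $\textsf{HC}_p$, $\textsf{HFC}_{p,d}$, $\textsf{HC}_{p,d}$; this is either recorded directly in \cite{fov} or follows from the fact that the $\ell_p$-upper estimates provided by $\textsf{N}_p$ can be transferred to maps on Hamming and interlaced graphs by the usual averaging arguments. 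Therefore both $M$ and $Z_p/M$ enjoy all five concentration properties, and to establish that none of them is a three-space property it is enough to show that $Z_p$ itself fails each of them.

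The key step is the implication asserted in the sentence preceding the proposition: \emph{every Banach space $X$ satisfying one of the five concentration properties indexed by $p$ has the weak $p$-Banach-Saks property.} The plan is to prove this by plugging into the concentration inequality the natural Lipschitz map
\[ (n_1<\dots<n_k)\ \longmapsto\ \sum_{j=1}^k x_{n_j}, \]
associated to a weakly null sequence $(x_n)\subset B_X$, which is $2$-Lipschitz on the corresponding Hamming or interlaced graph on $[N]^{(k)}$. The concentration estimate then produces, after a Ramsey-type extraction, an infinite subset of indices along which all $k$-sums have norm at most $Ck^{1/p}$, which is exactly the weak $p$-Banach-Saks conclusion. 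For the ``$d$'' variants, one absorbs the additional distortion factor into the universal constant $C$, which does not affect the qualitative conclusion.

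Once this implication is in place, one finishes by invoking the fact, already used in the proof of Proposition \ref{counterexample} and recorded in \cite{livre-3SP}, that $Z_p$ fails the weak $p$-Banach-Saks property. By the implication above, $Z_p$ then fails each of $\textsf{HFC}_p$, $\textsf{HIC}_p$, $\textsf{HC}_p$, $\textsf{HFC}_{p,d}$, and $\textsf{HC}_{p,d}$, while its subspace $M$ and quotient $Z_p/M$, both isometric to $\ell_p$, satisfy all of them. The main obstacle is the first step: each of the five properties from \cite{fov} has its own combinatorial framework (Hamming cubes, interlaced graphs, with or without distortion parameter $d$), so one must verify in each case that the weakly null partial-sum map fits into the setup and that the concentration inequality yields the desired subsequential $\ell_p$-upper estimate. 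Since the five properties form a hierarchy in which each implies $\textsf{HC}_p$ or $\textsf{HC}_{p,d}$, it actually suffices to carry out the extraction argument for the two weakest ones.
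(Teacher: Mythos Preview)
Your approach differs from the paper's and contains a genuine gap. The paper does \emph{not} argue via the weak $p$-Banach--Saks property here; the sentence preceding the proposition simply announces the conclusion that $Z_p$ fails all five concentration properties, not an intermediate implication of the form ``concentration $\Rightarrow$ weak $p$-BS''. The paper's proof is entirely direct: using the explicit Kalton--Peck sequence $(u_n)$ from \cite{KP}, it computes that the $2$-Lipschitz map $g(\n)=\sum_{j=1}^k u_{n_j}$ satisfies
\[
\|g(\n)-g(\m)\| = 2^{1/p}\Bigl(\tfrac{\ln(2k)}{p}+1\Bigr)k^{1/p}
\]
for \emph{every} interlaced pair $n_1<m_1<\cdots<n_k<m_k$. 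Since this grows like $k^{1/p}\log k$, no infinite $\mathbb{M}$ can produce even a single interlaced pair obeying a bound $\lambda k^{1/p}$ with $\lambda$ independent of $k$, and this simultaneously refutes all five properties.

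The gap in your route is precisely at the step you flag as ``the main obstacle''. Your extraction argument (apply the concentration hypothesis to $f(\n)=\tfrac12\sum x_{n_j}$, then pick $\m$ with all large indices so that $\|f(\m)\|$ is small by weak nullity, and conclude $\|f(\n)\|\le\|f(\n)-f(\m)\|+\|f(\m)\|$) works for $\textsf{HFC}_p$, where the concentration bound holds for \emph{all} pairs in $[\mathbb{M}]^k$. But for the weaker properties in the hierarchy---those requiring concentration only for interlaced pairs, or only for \emph{some} pair---the argument collapses: the $\m$ you need for smallness of $\|f(\m)\|$ has all coordinates beyond $n_k$, hence is never interlaced with $\n$, and the concentration inequality gives you nothing. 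Reducing to the two weakest properties, as you suggest, only makes this worse. It is not at all clear that $\textsf{HC}_p$ (or $\textsf{HIC}_p$) implies weak $p$-Banach--Saks, and your sketch does not supply the missing idea. The paper's explicit computation with the Kalton--Peck twisted sum sidesteps this difficulty entirely.
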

	
	For clarity, we will only define one of them here. First, for $k \in \nn$, let us denote $[\nn]^k = \{ \n=(n_1, \cdots, n_k) \in \nn^k ; n_1 < n_2 < \cdots < n_k \}$. We endow $[\nn]^k$ with the following distance, called the Hamming distance:
	\[ \forall \n, \m \in [\nn]^k, \ d_{\mathbb{H}}(\n,\m)= | \{ 1 \leq j \leq k ; n_j \neq m_j \} | . \] 
	We say that a Banach space $X$ has property HFC$_p$ if there exists $\lambda > 0$ such that, for every $k \in \nn$ and every $1$-Lipschitz function $f : ([\nn]^k, d_{\mathbb{H}}) \to X$, one can find an infinite subset $\mathbb{M}$ of $\nn$ so that
	\[ \forall \overline{n}, \overline{m} \in [\mathbb{M}]^k, \hspace{2mm}  \|f(\overline{n})-f(\overline{m})\| \leq \lambda k^{\frac{1}{p}} . \]
	We refer the reader to \cite{fov} for the definitions of the other concentration properties.
	
	\begin{proof}[Proof of Proposition \ref{propconc}]
		We use the notation from Theorem 6.1 \cite{KP}.
		For $k \in \nn$, the map $g : \left\lbrace \begin{array}{lll}
			[\nn]^k & \to & Z_p \\
			\n & \mapsto & \sum_{j=1}^k u_{n_j}
		\end{array} \right.$ is $2$-Lipschitz and satisfies
		\[ \| g(\n)-g(\m) \| = 2^{1/p} \left( \frac{\ln(2k)}{p}+1 \right) k^{1/p} \]
		for all integers $n_1 < m_1 < n_2 < \cdots < n_k < m_k$. Therefore, $Z_p$ does not have any of the properties mentioned above. The result follows from the fact that $\ell_p$ has them all, which is due to Kalton and Randrianarivony \cite{KaltonRandrianarivony2008}.
	\end{proof}
	
	\subsection{Asymptotic uniform flatenability}
	
	In the case $p=\infty$, the situation is different. First, we easily have the following.
	
	\begin{theorem}\label{AUF3SP}
		The property $\textsf{\emph{T}}_\infty$ is a three-space property.
	\end{theorem}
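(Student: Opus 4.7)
By Proposition \ref{stability} and Lemma \ref{separable reduction}, the problem reduces to the separable case; so one fixes a separable Banach space $X$ with closed subspace $Y$ such that both $Y$ and $X/Y$ belong to $\textsf{T}_\infty$. By Theorem \ref{ttheorem}, there exist constants $c_Y,c_Q>0$ and Player I winning strategies $\varpi$ in the $T(c_Y,\infty)$ game on $Y$ and $\chi$ in the $T(c_Q,\infty)$ game on $X/Y$. The plan is to build, from $\varpi$ and $\chi$, a Player I winning strategy in some $T(c,\infty)$ game on $X$; by Theorem \ref{ttheorem} again this will give $X\in\textsf{T}_\infty$.

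Throughout a play of the $T(c,\infty)$ game on $X$, Player I maintains two parallel simulated plays: one in $X/Y$ against $\chi$, tracking the sequence $(Q(x_k))_k$ of projections, and one in $Y$ against $\varpi$, tracking a companion sequence $(y_k)_k\subset Y$. At round $k$, $\chi$ prescribes a weak neighborhood $V_k$ of $0$ in $X/Y$ and $\varpi$ prescribes a weak neighborhood $W_k$ of $0$ in $Y$. Via Claim \ref{triv_claim} applied with $R=\delta=1+c_Q$, Player I selects a weak neighborhood $U_k\subset Q^{-1}(V_k)$ of $0$ in $X$ such that every $x\in U_k\cap B_X$ (which automatically satisfies $\|Q(x)\|_{X/Y}\leqslant 1<\delta$) admits a companion $y\in W_k\cap RB_Y$ with $\|x-y\|\leqslant 6(1+c_Q)$. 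When Player II plays $x_k\in U_k\cap B_X$, Player I records $Q(x_k)$ in the $X/Y$-simulation and the resulting $y_k$ in the $Y$-simulation.

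At the end of the play, the winning conditions of $\chi$ and $\varpi$ yield, for every finitely supported scalar sequence $(a_i)$,
\[
\Big\|\sum_i a_iQ(x_i)\Big\|_{X/Y}\leqslant c_Q\|a\|_\infty\qquad\text{and}\qquad \Big\|\sum_i a_iy_i\Big\|_Y\leqslant c_YR\|a\|_\infty.
\]
The main obstacle is to bound $\|\sum_i a_i(x_i-y_i)\|_X$ uniformly in $a$ and in the length of the play: each difference $x_i-y_i$ is controlled only individually, by $6(1+c_Q)$, and through its projection, by $\|Q(x_i-y_i)\|_{X/Y}=\|Q(x_i)\|_{X/Y}$, which only yields $\|\sum_ia_i(x_i-y_i)\|_{X/Y}\leqslant c_Q\|a\|_\infty$. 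The $p=\infty$ case is genuinely more delicate than $p<\infty$ here, since the naive triangle-inequality bound on the sum of the differences grows with the length. The resolution, which is the heart of the argument, is a bootstrap: one iterates the same construction on the weakly null sequence $(x_i-y_i)$, whose $X/Y$-projections are already under control, and uses the boundedness of each residual together with Claim \ref{triv_claim} to trade off successive approximations in $Y$ against residuals in $X$; the combinatorics are most cleanly organized as an outer/inner tree as in the proof of Lemma \ref{cdk2}, with $\varpi$ run on the outer level and $\chi$ on the inner level. This produces a uniform bound $\|\sum_i a_i(x_i-y_i)\|_X\leqslant c'\|a\|_\infty$ with $c'$ depending only on $c_Q,c_Y$, and the triangle inequality then yields $\|\sum_ia_ix_i\|_X\leqslant c\|a\|_\infty$, so $X\in\textsf{T}_\infty$.
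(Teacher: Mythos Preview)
Your reduction to the separable case and the parallel-game setup are fine (though you should also cite Theorem~\ref{separabledetermination} for the separable determination hypothesis in Lemma~\ref{separable reduction}). The genuine gap is the ``bootstrap'' paragraph: you correctly identify that the residuals $x_i-y_i$ are only individually bounded, but the resolution you sketch is not an argument. Iterating the approximation on $(x_i-y_i)$ gains nothing, since $Q(x_i-y_i)=Q(x_i)$, so the quotient norm does not shrink and Claim~\ref{triv_claim} will return a new residual of the same size. Organizing things as an outer/inner tree ``as in the proof of Lemma~\ref{cdk2}'' is exactly the method that yields Theorem~\ref{cdk}: it produces the submultiplicative inequality $\theta_{mn}(X)\leqslant C\,\theta_n(X)$ when $\theta_m(Y),\theta_m(X/Y)$ are bounded, hence $\theta_n(X)=O((\log n)^\lambda)$, which places $X$ in $\textsf{P}_\infty$ but \emph{not} in $\textsf{T}_\infty$. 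Nothing in your outline closes the gap between $\textsf{P}_\infty$ and $\textsf{T}_\infty$; no uniform bound $c'$ depending only on $c_Q,c_Y$ has been produced.

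The paper's proof avoids this entirely by invoking the characterization (from \cite{GKL2000}) that a separable space lies in $\textsf{T}_\infty$ if and only if it is isomorphic to a subspace of $c_0$, and then the classical Lindenstrauss/Johnson--Zippin result that this latter property is a 3SP. The subsequent Proposition in the paper gives a second, equally short route via Sobczyk's theorem: embed $Y\hookrightarrow c_0$, extend to $S:X\to c_0$, and observe that $x\mapsto (Sx,Qx)$ embeds $X$ into $c_0\oplus X/Y$. Both arguments use the $c_0$-embedding characterization in an essential way; a purely game-theoretic proof along your lines would be new and does not follow from the ingredients you cite.
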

	
	\begin{proof} Let us first recall that a separable Banach space is $\textsf{T}_\infty$ if and only if it is isomorphic to a subspace of $c_0$ (see \cite{GKL2000}). It was shown by Johnson and Zippin  \cite{JohnsonZippin1974}, who attributed it to Lindenstrauss,  that being isomorphic to a subspace of $c_0$ is a three-space property. So we deduce from Theorem \ref{separabledetermination}, Proposition \ref{stability} and Lemma \ref{separable reduction} that $\textsf{T}_\infty$ is a three-space property.
	\end{proof}
	
	By looking at the argument in \cite{JohnsonZippin1974}, we can actually show slightly more.
	
	\begin{proposition} Let $p\in (1,\infty]$ and $\textsf{\emph{B}}$ be any one of the properties $\textsf{{\emph{T}}}_p$, $\textsf{{\emph{A}}}_p$ and $\textsf{{\emph{N}}}_p$. Let $X$ be a Banach space with a closed subspace $Y$ such that $Y$ has $\textsf{{\emph{T}}}_\infty$ and $X/Y$ has $\textsf{{\emph{B}}}$. Then $X$ has $\textsf{{\emph{B}}}$.
	\end{proposition}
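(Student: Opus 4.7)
The strategy is to extend the Sobczyk-based embedding argument used in Theorem \ref{AUF3SP}, combining it with the renorming characterizations from Section 3. The plan is to embed $X$ into $c_0 \oplus_\infty (X/Y)$ and show that the latter inherits property $\textsf{B}$.

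First, I would reduce to the separable case. Since $\textsf{B}$ is separably determined (Theorem \ref{separabledetermination}), it suffices to show every separable subspace of $X$ lies in $\textsf{B}$. Given a separable $E \leqslant X$, apply the enlargement construction from the proof of Lemma \ref{separable reduction} to obtain a separable $G$ with $E \leqslant G \leqslant X$ such that $G \cap Y$ is separable (and hence in $\textsf{T}_\infty$ by Proposition \ref{stability}) and $G/(G \cap Y)$ is isometric to a subspace of $X/Y$ (and hence in $\textsf{B}$). Thus it is enough to prove the proposition assuming $X$ (and hence $Y$ and $X/Y$) is separable.

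With $X$ separable, $Y \in \textsf{T}_\infty$ is isomorphic to a subspace of $c_0$ by Theorem \ref{ttheorem}; fix an isomorphic embedding $j: Y \to c_0$. Sobczyk's theorem (separable injectivity of $c_0$) extends $j$ to $T: X \to c_0$ with $\|T\| \leqslant 2\|j\|$, so $T|_Y = j$ remains bounded below. Define $S: X \to Z := c_0 \oplus_\infty (X/Y)$ by $Sx = (Tx, \pi x)$, with $\pi$ the quotient map. Then $S$ is an isomorphic embedding: boundedness is clear, and for the lower bound, given $x \in X$ pick $y \in Y$ with $\|x-y\|_X \leqslant 2\|x+Y\|_{X/Y}$, then combine the lower bound on $T|_Y$ with the triangle inequality to get $\|x\|_X \leqslant C\|Sx\|_Z$. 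Now equip $c_0$ with an AUF norm $|\cdot|_c$ having $\overline{\rho}_{|\cdot|_c}(\sigma_0)=0$ (rescaled so $\sigma_0$ is as large as needed for the case at hand), equip $X/Y$ with the renorming(s) provided by Theorem \ref{ttheorem}, \ref{atheorem}, or \ref{ntheorem}, and give $Z$ the max norm. For $(a,b)\in B_Z$ and a weakly null net $(u_\lambda,v_\lambda)\subset B_Z$, both coordinates are weakly null in the respective unit balls, and
\[ \limsup_\lambda |(a,b)+\sigma(u_\lambda,v_\lambda)|-1 \leqslant \max\bigl(\overline{\rho}_{c_0}(\sigma),\,\overline{\rho}_{X/Y}(\sigma)\bigr).\]
Using $\overline{\rho}_{c_0}(\sigma)=0$ for $\sigma\leqslant\sigma_0$ and the trivial bound $\overline{\rho}_{c_0}(\sigma)\leqslant \sigma \leqslant \sigma_0^{1-p}\,\sigma^p$ otherwise, one reads off the required modulus bound on $Z$ for each version of $\textsf{B}$. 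Then $X \in \textsf{B}$ follows by subspace stability (Proposition \ref{stability}).

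The main delicacy is the $\textsf{A}_p$ case, where one works with a family of norms on $X/Y$ indexed by $\tau \in (0,1]$. The pulled-back family on $X$ must retain uniform equivalence constants and satisfy $\overline{\rho}(s) \leqslant Cs^p$ for all $s\geqslant \tau$ with $C$ independent of $\tau$; this holds because $\|T\|$, the embedding constant of $T|_Y$, and $\sigma_0$ are all independent of $\tau$.
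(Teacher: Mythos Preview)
Your proof is correct and follows essentially the same route as the paper's: reduce to the separable case, embed $X$ into $c_0 \oplus (X/Y)$ via Sobczyk's theorem, and conclude since $\textsf{B}$ is inherited by subspaces of this direct sum. The only difference is that the paper simply asserts $\textsf{B}$ ``passes clearly to direct sums'' while you verify this via the renorming theorems of Section~\ref{relations}; as a minor aside, one always has $\sigma_0 \leqslant 1$ (since $\overline{\rho}(\sigma) \geqslant \sigma - 1$ for any norm), so the parenthetical about rescaling is moot, but the canonical norm on $c_0$ already gives $\sigma_0 = 1$, which is all your estimates require.
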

	
	\begin{proof} By Theorem \ref{separabledetermination}, we may assume that $X$ is separable. Let $T:Y \to c_0$ be a linear embedding. It follows from Sobczyk's theorem that $c_0$ has the separable extension property. Therefore $T$ extends to a bounded linear map $S:X \to c_0$. Define now $U:X\to c_0\oplus X/Y$ by $Ux=(Sx,Qx)$ where $Q:X \to X/Y$ is the quotient map. It then easy to check that $U$ is a linear embedding from $X$ into $c_0\oplus X/Y$. Finally, since $\textsf{B}$ passes clearly to direct sums, we deduce that $c_0\oplus X/Y$ and therefore $X$ have $\textsf{B}$.
	\end{proof}
	
	\subsection{Summable Szlenk index}
	This subsection contains the proof of our main result on three-space properties. We will show that  $\textsf{A}_\infty$ is a three-space property.
	
	Recall that a Banach space is said to have property $\textsf{A}_\infty$ provided there exists a constant $c>0$ such that for each $n\in\nn$, Player I has a winning strategy in the $N(c,\infty,n)$ game (since $\textsf{{A}}_\infty= \textsf{{N}}_\infty$ according to Theorem \ref{containments}): Player I chooses a weak neighborhood $U_1$ of $0$ in $X$ and Player II chooses $x_1\in U_1\cap B_X$. Player I chooses a weak neighborhood $U_2$ of $0$ in $X$ and Player II chooses $x_2\in U_2\cap B_X$. Play continues in this way until $x_1, \ldots, x_n$ have been chosen. Player I wins if $\|\sum_{i=1}^n x_i\|\leqslant c$ and Player II wins otherwise.  
	
	As we are going to use a separable reduction, we will be able to use trees indexed by $\Ndb$. For that purpose we let $T_n=\Ndb^{\le n}$.
	
	It will be convenient for us to introduce the notions of \emph{$\uuu$-weakly (or weak$^*$) null} sequences or collections indexed by $T_n$ in a Banach space, where $\uuu$ is a given free ultrafilter on $\nn$. Let us assume, more generally, that $\uuu$ is a filter on $\nn$.  Given a Banach space $X$, we say that a sequence $(x_i)_{i=1}^\infty \subset X$ is $\uuu$-\emph{weakly null} if it converges to $0$ over $\uuu$ in the weak topology.  The notion of $\uuu$-\emph{weak}$^*$-\emph{null} for a sequence $(x^*_i)_{i=1}^\infty\subset X^*$ is defined similarly. A collection $(x_t)_{t\in T_n}\subset X$ is $\uuu$-\emph{weakly null} provided that for each $t\in \{\varnothing\}\cup T_{n-1}$, $(x_{t\smallfrown (m)})_{m=1}^\infty$ is $\uuu$-weakly null. We say $(x^*_t)_{t\in T_n}\subset  X^*$ is $\uuu$-\emph{weak}$^*$-\emph{null} provided that for each $t\in \{\varnothing\}\cup T_{n-1}$, $(x^*_{t\smallfrown (m)})_{m=1}^\infty$ is $\uuu$-weak$^*$-null. Note that for each Banach space $X$ and each $n\in\nn$, $B_X$ admits a $\uuu$-weakly null collection and $B_{X^*}$ admits a $\uuu$-weak$^*$-null collection, namely the collections consisting entirely of zeros. 
	
	In the remainder of this section, we shall always assume that $\uuu$ is a free ultrafilter on $\nn$.
	
	\begin{proposition} \label{normingseq} 
		Let $Z$ be a Banach space such that $Z^*$ is separable, and $(z^*_m)_{m=1}^\infty \subset Z$ a $\uuu$-weak$^*$-null sequence in $Z^*$. For any $\delta>0$, there  exists a $\uuu$-weakly null sequence $(z_m)_{m=1}^\infty \subset B_Z$ such that 
		$$\underset{m\in\uuu}{\lim} \text{\emph{Re}\ }z^*_m(z_m)\geqslant \underset{m\in\uuu}{\lim} \frac{\|z^*_m\|}{2}-\delta.$$
	\end{proposition}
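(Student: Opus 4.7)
The plan is to construct $(z_m)$ as half-differences of a near-norming sequence and an element of $B_Z$ approximating its weak-star limit. First, by the Hahn--Banach theorem, for each $m$ I would pick $w_m \in B_Z$ with $\text{Re}\, z^*_m(w_m) \geq \|z^*_m\| - \delta/4$. Since $Z^*$ is separable, the unit ball $(B_{Z^{**}}, w^*)$ is metrizable, so the bounded sequence $(w_m) \subset B_{Z^{**}}$ has a $\uuu$-weak$^*$-limit $w^{**} \in B_{Z^{**}}$.

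Next, I would approximate $w^{**}$ by elements of $B_Z$. By Goldstine's theorem combined with the metrizability coming from separability of $Z^*$, there exists a sequence $(y_k) \subset B_Z$ with $y_k \to w^{**}$ in weak-star (ordinary sequential limit). A diagonal argument using a countable norm-dense subset $\{\xi_j\} \subset Z^*$, chosen so that each $z^*_m$ appears among the $\xi_j$ with $j \leq m$, produces a refined sequence $\tilde y_m \in B_Z$ satisfying $|\xi_j(\tilde y_m) - w^{**}(\xi_j)| < 1/m$ for all $j \leq m$; in particular $z^*_m(\tilde y_m) - w^{**}(z^*_m) \to 0$ as $m \to \infty$.

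Define $z_m = (w_m - \tilde y_m)/2 \in B_Z$. For each $\xi \in Z^*$,
\[
\lim_{m\in\uuu} \xi(z_m) = \tfrac{1}{2}\bigl(w^{**}(\xi) - w^{**}(\xi)\bigr) = 0,
\]
so $(z_m)$ is $\uuu$-weakly null. On the quantitative side,
\[
\text{Re}\, z^*_m(z_m) = \tfrac{1}{2}\bigl(\text{Re}\, z^*_m(w_m) - \text{Re}\, z^*_m(\tilde y_m)\bigr) \geq \tfrac{1}{2}\bigl(\|z^*_m\| - \delta/4 - \text{Re}\, z^*_m(\tilde y_m)\bigr).
\]

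The main obstacle -- and the reason for the factor $1/2$ in the conclusion -- is the control of $\lim_{m\in\uuu} \text{Re}\, z^*_m(\tilde y_m)$, which by the diagonal equals $\lim_{m\in\uuu} \text{Re}\, w^{**}(z^*_m)$. This quantity vanishes automatically whenever $w^{**} \in Z$, since the $\uuu$-weak$^*$-nullness of $(z^*_m)$ then forces $z^*_m(w^{**}) \to 0$; but $w^{**} \in Z^{**} \setminus Z$ must be handled too. The crucial step is therefore to arrange, possibly after replacing $(w_m)$ by a modified near-norming sequence extracted along $\uuu$ via a Mazur-type basic-sequence argument in the separable dual $Z^*$ (so that $z^*_k(w_m) \to 0$ for each fixed $k$), that $w^{**}$ annihilates each $z^*_k$, whence $|z^*_m(\tilde y_m)| < 1/m$ for large $m$. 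The unavoidable geometric loss incurred when the near-norming vector must live in finite-codimensional subspaces cut out by the earlier $\ker z^*_k$'s is precisely what produces the factor $1/2$ in the final estimate.
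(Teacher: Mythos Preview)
Your overall strategy---take near-norming vectors $w_m$, subtract something converging weak$^*$ to the same limit $w^{**}$, and divide by two---is exactly the paper's approach. You also correctly identify the obstacle: controlling $\lim_{m\in\uuu}\text{Re}\,z^*_m(\tilde y_m)$, which your diagonal forces to equal $\lim_{m\in\uuu}\text{Re}\,w^{**}(z^*_m)$, a quantity that need not vanish when $w^{**}\notin Z$.

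The gap is in your proposed fix. Restricting $w_m$ to the finite-codimensional subspace $\bigcap_{k<m}\ker z^*_k$ does \emph{not} in general preserve a near-norming property for $z^*_m$ up to a universal factor; there is simply no such bound, and the Mazur-type basic sequence arguments you allude to do not produce one. More tellingly, your bookkeeping is off: the factor $1/2$ in the statement comes entirely from the half-difference $z_m=(w_m-\tilde y_m)/2$, which you already have. If your modification of $(w_m)$ cost an additional factor $1/2$, the final bound would be $\|z^*_m\|/4$, not $\|z^*_m\|/2$. So the explanation that the $1/2$ ``is precisely'' the loss from restricting to finite-codimensional kernels is mistaken.

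The paper avoids this difficulty by choosing the subtracted vector differently. Rather than forcing $w^{**}(z^*_m)\to 0$, it takes a sequence $(u_i)\subset B_Z$ with $u_i\to w^{**}$ weak$^*$ (with $u_0=0$) and, for each $m$, selects an index $f(m)$ so that $|z^*_m(u_{f(m)})|\leqslant\delta$ \emph{directly}. This is always possible because $u_i\in Z$ and $(z^*_m)$ is $\uuu$-weak$^*$-null, so the set $C_m=\{i:|z^*_m(u_i)|\leqslant\delta\}$ contains $0$ and, crucially, is cofinal for $\uuu$-many $m$. A short argument then shows $\lim_{m\in\uuu}f(m)=\infty$, which gives $u_{f(m)}\to w^{**}$ weak$^*$ along $\uuu$ and hence $\uuu$-weak nullness of $z_m=\tfrac12(w_m-u_{f(m)})$. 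The point is that you do not need $\tilde y_m$ to track $w^{**}$ on $z^*_m$; you need it to be nearly annihilated by $z^*_m$ while still tending to $w^{**}$, and these are compatible precisely because the $u_i$ live in $Z$.
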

	
	\begin{proof} If $\underset{m\in\uuu}{\lim}\|z^*_m\|=0$, simply take $z_m=0$ for all $m\in\nn$. Suppose that $r:=\underset{m\in\uuu}{\lim} \|z^*_m\|>0$. For each $m\in\nn$, fix $x_m\in B_X$ such that $\text{Re\ }z^*_m(x_m) > \|z^*_m\|-\delta$. Let $x^{**} = \text{weak}^*\text{-}\underset{m\in\uuu}{\lim} x_m$, where the limit is taken in $B_{X^{**}}$. Since $X^*$ is separable, the weak$^*$-topology on $B_{X^{**}}$ is metrizable, which means some subsequence $(u_m)_{m=1}^\infty$ of $(x_m)_{m=1}^\infty$ is weak$^*$-convergent to $x^{**}$. Define $u_0=0$. For each $m\in\nn$, let $C_m=\{i\in\nn_0: |z^*_m(u_i)|\leqslant \delta\}$, where $\nn_0=\{0\}\cup \nn$. Note that $0\in C_m$ for all $m\in\nn$. Define $f:\nn\to \nn_0$ by letting $f(m)=\max C_m$ if $\max C_m < m$, and let $f(m)\in C_m\cap [m,\infty)$ be arbitrary if $\max C_m\geqslant m$. Note that $\underset{m\in\uuu}{\lim} f(m)=l \in \nn_0\cup \{\infty\}$, where $\nn_0\cup \{\infty\}$ is the one-point compactification of $\nn_0$. We claim that $l=\infty$. Indeed, if $l<\infty$, then 
		\[f^{-1}(\{l\})\cap \{m\in\nn:|z^*_m(u_{l+1})|<\delta\} \in \uuu,\] 
		and therefore there exists some $l<m_0\in f^{-1}(\{l\})\cap \{m\in\nn: z^*_m(u_{l+1})\}$. But this means that $l+1\in C_{m_0}$ and $l=\max C_{m_0}$, this is a contradiction. 
		
		Define now $z_m = \frac12(x_m - u_{f(m)})\in B_X$. Since $\underset{m\in\uuu}{\lim} f(m)=\infty$ and weak$^*$-$\lim_{m\to \infty}u_m=x^{**}$, weak$^*$-$\underset{m\in\uuu}{\lim} u_{f(m)} = x^{**}$. Therefore weak-$\underset{m\in\uuu}{\lim} z_m=0$. By our choice of $f(m)$, $|z^*_m(u_{f(m)})| \leqslant \delta$ for all $m\in\nn$, and \begin{align*}  \underset{m\in\uuu}{\lim} \text{Re\ }z^*_m(z_m) & \geqslant \underset{m\in\uuu}{\lim}\text{Re\ }\frac12 z^*_m(x_m) -\underset{m\in\uuu}{\lim} \frac12|z^*_m(u_{f(m)})| \geqslant \frac{r}{2}-\delta.\end{align*}

	\end{proof}
	
	We define $\alpha^\uuu_n(X)$ to be the infimum of $a\geqslant 0$ such that for any $\uuu$-weakly null $(x_t)_{t\in T_n}\subset B_X$, \[\underset{m_1\in\uuu}{\lim} \ldots \underset{m_n\in\uuu}{\lim} \Bigl\|\sum_{i=1}^n x_{(m_1, \ldots, m_i)}\Bigr\| \leqslant a.\]   
	We define $\beta^\uuu_n(X)$ to be the infimum of $b>0$ such that for any $\uuu$-weak$^*$-null $(x^*_t)_{t\in T_n} \subset B_{X^*}$, \[b\underset{m_1\in\uuu}{\lim}\ldots \underset{m_n\in\uuu}{\lim} \Bigl\|\sum_{i=1}^n x^*_{(m_1, \ldots, m_i)}\Bigr\| \geqslant \sum_{i=1}^n \underset{m_1\in\uuu}{\lim}\ldots \underset{m_i\in\uuu}{\lim} \|x^*_{(m_1, \ldots, m_i)}\|.\]
	
	The next proposition details a characterization and a dual formulation of $\textsf{A}_\infty$ for spaces with separable dual. 
	\begin{proposition} \label{dual} 
		Let $X$ be a Banach space such that $X^*$ is separable. 
		\begin{enumerate}[(i)]
			\item  $X$ has ${\emph{\textsf{A}}}_\infty$ if and only if $\sup_n \alpha_n^\uuu(X)<\infty$. 
			\item For each $n\in\nn$, $\alpha_n^\uuu(X)\leqslant 2 \beta^\uuu_n(X)$. 
			\item For each $n\in\nn$, $\beta^\uuu_n(X) \leqslant 2\alpha_n^\uuu(X)$.  
		\end{enumerate}
	\end{proposition}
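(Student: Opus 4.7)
The three items all rely on the separability of $X^*$, which provides a countable, sequential weak neighborhood base at $0$ in $X$ and allows trees to be indexed by $T_n=\nn^{\leqslant n}$. My approach is: for (i), directly match the game characterization of $\textsf{A}_\infty=\textsf{N}_\infty$ with iterated $\uuu$-limits; for (ii) and (iii), construct auxiliary trees on the opposite side (dual or primal) and run a cross-term cancellation argument akin to a bilinear form computation.

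For (i), the forward direction plays a $\uuu$-weakly null tree $(x_t)_{t\in T_n}$ in $B_X$ against Player I's winning strategy in the $N(c,\infty,n)$ game: at each stage Player I prescribes a weak neighborhood $U_i$, and the set $\{m : x_{(m_1,\ldots,m_{i-1},m)}\in U_i\}$ belongs to $\uuu$ by $\uuu$-weak nullity; hence for $\uuu$-most $m_1$, for $\uuu$-most $m_2$ given $m_1$, and so on, the branch is consistent with Player I's strategy and satisfies $\|\sum_i x_{(m_1,\ldots,m_i)}\|\leqslant c$, giving $\alpha_n^\uuu(X)\leqslant c$. Conversely, if $\sup_n \alpha_n^\uuu(X)=a<\infty$ but Player II had a winning strategy in $N(c,\infty,n)$ for some $c>a$, that strategy would produce a weakly null, hence $\uuu$-weakly null, tree in $B_X$ with every branch-sum exceeding $c$, whose iterated $\uuu$-limit would be at least $c$, contradicting $\alpha_n^\uuu(X)\leqslant a$.

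For (iii), given a $\uuu$-weak$^*$-null tree $(x^*_t)_{t\in T_n}$ in $B_{X^*}$, apply Proposition \ref{normingseq} at every internal node $s$ to the sequence $(x^*_{s\smallfrown(m)})_m$ to obtain a $\uuu$-weakly null tree $(z_t)_{t\in T_n}$ in $B_X$ with $\lim_{m\in\uuu}\text{Re}\,x^*_{s\smallfrown(m)}(z_{s\smallfrown(m)})\geqslant \tfrac{1}{2}\lim_{m\in\uuu}\|x^*_{s\smallfrown(m)}\|-\delta$. Expand the pairing $\bigl(\sum_j x^*_{(m_1,\ldots,m_j)}\bigr)\bigl(\sum_i z_{(m_1,\ldots,m_i)}\bigr)$ as a double sum and take the iterated $\uuu$-limit in the order $m_n,m_{n-1},\ldots,m_1$. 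All off-diagonal terms $i\neq j$ vanish: for $i>j$ by $\uuu$-weak nullity of $z$ in the innermost free coordinate $m_i$, and for $i<j$ by $\uuu$-weak$^*$ nullity of $x^*$ in $m_j$. Each diagonal term contributes at least $r_i/2-\delta$, where $r_i=\lim_{m_1}\cdots\lim_{m_i}\|x^*_{(m_1,\ldots,m_i)}\|$. The pairing is bounded above by the product of the two norms, whose iterated $\uuu$-limit factors (by multiplicativity of $\uuu$-limits of bounded real sequences) as $L_1\cdot \alpha_n^\uuu(X)$; letting $\delta\to 0$ yields $L_2\leqslant 2\alpha_n^\uuu(X)\,L_1$.

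For (ii), given a $\uuu$-weakly null tree $(x_t)_{t\in T_n}$ in $B_X$, select norming functionals $x^*_t\in B_{X^*}$ at the leaves with $x^*_t\bigl(\sum_i x_{t|_i}\bigr)=\bigl\|\sum_i x_{t|_i}\bigr\|$, and define $y^*_s$ at every internal node $s$ by iterated weak$^*$-$\uuu$-limits along descendants of $s$ (these exist by weak$^*$-compactness of $B_{X^*}$). Setting $z^*_s:=y^*_s-y^*_{s^-}$ for $s\neq\varnothing$ produces a $\uuu$-weak$^*$-null tree $(z^*_t/2)_{t\in T_n}$ in $B_{X^*}$ that satisfies the telescoping identity $x^*_t=y^*_\varnothing+\sum_{i=1}^n z^*_{t|_i}$. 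Expanding $\bigl\|\sum_i x_{t|_i}\bigr\|=x^*_t\bigl(\sum_i x_{t|_i}\bigr)$ and taking iterated $\uuu$-limits, the $y^*_\varnothing$-term vanishes (as $(x_t)$ is $\uuu$-weakly null against a fixed functional) and the off-diagonal cross terms vanish by the same analysis as in (iii), leaving $L_1\leqslant \sum_i\lim\|z^*_{t|_i}\|$. Applying the defining inequality of $\beta_n^\uuu$ to the tree $(z^*_t/2)$ and using $\bigl\|\sum_i z^*_{t|_i}\bigr\|=\|x^*_t-y^*_\varnothing\|\leqslant 2$ then gives $\sum_i\lim\|z^*_{t|_i}\|\leqslant 2\beta_n^\uuu(X)$, hence $L_1\leqslant 2\beta_n^\uuu(X)$. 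The main obstacle in both (ii) and (iii) is the cross-term bookkeeping in the iterated $\uuu$-limits, which hinges on the single-valuedness and multiplicativity of $\uuu$-limits on weak$^*$-compact (resp. bounded real) sets and on correctly identifying the innermost free coordinate of each summand.
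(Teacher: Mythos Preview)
Your proposal is correct and follows essentially the same approach as the paper: the same game-versus-tree matching for (i), the same norming-at-leaves/iterated-weak$^*$-limits/telescoping construction for (ii), the same use of Proposition~\ref{normingseq} for (iii), and the identical cross-term cancellation via $\uuu$-weak (resp.\ weak$^*$) nullity in the innermost free coordinate. The only cosmetic difference is that in (ii) the paper norms with functionals in $\tfrac{1}{2}B_{X^*}$ so the differences land directly in $B_{X^*}$, whereas you norm in $B_{X^*}$ and rescale by $1/2$ afterward; the effect is the same.
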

	
	\begin{proof}$(i)$ Since $X^*$ is separable, there exists a metric $d$ on $B_X$ that is compatible with the weak topology. For each $n\in\nn$, let $U_n=\{x\in X:d(x,0)<1/n\}.$ 
		
		First assume that $X$ does not have property $\textsf{A}_\infty$. Then for each $a>0$, there exists $n\in\nn$ such that Player I fails to have a winning strategy in the $A(a,n)$ game. Since the $A(a,n)$ game is determined, Player II must have a winning strategy in the $A(a,n)$ game. We will choose $(x_t)_{t\in T_n}$ according to this winning strategy. First, let $x_{(m)}\in U_m\cap B_X$ be Player II's response if Player I opens the game with $U_m$. For $1<k\leqslant n$ and $t=(m_1, \ldots, m_k)$, let $x_t\in U_{m_k}\cap B_X$ be Player II's response if Players I and II have chosen $U_{m_1}, x_{(m_1)}, \ldots, x_{(m_1, \ldots, m_{k-1})}, U_{m_k}$. For each $t=(m_i)_{i=1}^n\in T_n$, since $U_{m_1}, x_{(m_1)}, U_{m_2}, x_{(m_1, m_2)}, \ldots, U_{m_n}, x_{(m_1, \ldots, m_n)}$ were chosen according to Player II's winning strategy, $\Bigl\|\sum_{i=1}^n x_{(m_1, \ldots, m_i)}\Bigr\|>a$. Therefore \[\underset{m_1\in\uuu}{\lim}\ldots \underset{m_n\in\uuu}{\lim} \Bigl\|\sum_{i=1}^n x_{(m_1, \ldots, m_i)}\Bigr\| \geqslant a.\]  
		Since $x_{t\smallfrown (m)}\in U_m$ for each $t\in \{\varnothing\}\cup T_{n-1}$ and $m\in\nn$, it follows that $(x_{t\smallfrown (m)})_{m=1}^\infty$ is weakly null, and is therefore a $\uuu$-weakly null sequence, for each $t\in \{\varnothing\}\cup T_{n-1}$. Therefore this collection $(x_t)_{t\in T_n}\subset B_X$ witnesses the fact that $\alpha_n^\uuu(X)\geqslant a$. Since $a>0$ was arbitrary, $\sup_n \alpha^\uuu_n(X)=\infty$. By contraposition, if $\sup_n \alpha^\uuu_n(X) < \infty$, $X$ has property $\textsf{A}_\infty$.

		Next, suppose that $X$ has property $\textsf{A}_\infty$. Fix  $a_0>0$ such that for all $n\in\nn$, Player I has a winning strategy in the $A(a_0,n)$ game. Suppose that for some $n\in\nn$, $\alpha_n^\uuu(X)>a_0$.  Fix $\alpha_n^\uuu(X)>a>a_0$. There exists $(x_t)_{t\in T_n}\subset B_X$ which is $\uuu$-weakly null and such that \[\underset{m_1\in\uuu}{\lim}\ldots \underset{m_n\in\uuu}{\lim} \Bigl\|\sum_{i=1}^n x_{(m_1, \ldots, m_i)}\Bigr\| >a.\]  
		Let $V_1$ be Player I's first choice according to a winning strategy in the $A(a_0,n)$ game. This means we can choose 
		\[m_1 \in \{m\in\nn: x_{(m)}\in V_1\}\cap \{m\in \nn: \|x_{(m)}\|>a\}\in\uuu,\ \ \text{if}\ n=1\] and  
		\begin{align*}&m_1\in \{m\in\nn:x_{(m)}\in V_1\}\\
			&\cap \Bigl\{m\in\nn: \underset{m_2\in\uuu}{\lim}\ldots \underset{m_n\in\uuu}{\lim} \Bigl\|x_{(m)}+\sum_{i=2}^n x_{(m, m_2, \ldots, m_i)}\Bigr\|>a\Bigr\}\in\uuu,\ \ \text{if}\ n>1
		\end{align*}
		Let Player II's choice in the $A(a_0, n)$ game be $x_{(m_1)}$. Next, assume that for some $1\leqslant k<n$, $V_1, \ldots, V_k$ and $m_1, \ldots, m_k$ have been chosen such that \begin{enumerate}[(a)]
			\item $V_1, x_{(m_1)}, \ldots, V_k, x_{(m_1, \ldots, m_k)}$ have been chosen in the $A(a_0, n)$ game with Player I playing according to a winning strategy, \item we have the inequality \[\underset{m_{k+1}\in\uuu}{\lim}\ldots \underset{m_n\in\uuu}{\lim}\Bigl\|\sum_{i=1}^n x_{(m_1, \ldots, m_i)}\Bigr\|>a,\]   
		\end{enumerate} 
		We now describe the recursive step of the construction.\\ 
		Assume first that $k+1<n$. Let $V_{k+1}$ be Player I's next choice according to the winning strategy, let Player II's choice be $x_{(m_1, \ldots, m_{k+1})}$, where 
		\begin{align*} & m_{k+1}  \in \{m\in\nn: x_{(m_1, \ldots, m_k, m)}\in V_{k+1}\}\cap \\ 
			&\Bigl\{m\in\nn: \underset{m_{k+2}\in\uuu}{\lim} \ldots \underset{m_n\in\uuu}{\lim}\Bigl\|x_{(m_1, \ldots, m_k, m)}+\sum_{i\neq k+1;1\le i\le n} x_{(m_1, \ldots, m_i)} \Bigr\|>a\Bigr\}\in \uuu 
		\end{align*}
		Assume, for the last step, that $k+1=n$. Let $V_{k+1}=V_n$ be Player I's next choice according to the winning strategy, let Player II's choice be $x_{(m_1, \ldots, m_{n})}$, where
		\begin{align*} & m_{k+1}=m_n  \in \{m\in\nn: x_{(m_1, \ldots, m_k, m)}\in V_{n}\}\cap \\ 
			&\Bigl\{m\in\nn: \Bigl\|x_{(m_1, \ldots, m_k, m)}+\sum_{i=1}^k x_{(m_1, \ldots, m_i)} \Bigr\|>a\Bigr\}\in \uuu 
		\end{align*}
		This completes the recursive construction, from which we it follows that $\|\sum_{i=1}^n x_{(m_1, \ldots, m_i)}\|>a.$
		However, since $V_1, x_{(m_1)}, \ldots, V_n, x_{(m_1, \ldots, m_n)}$ were chosen with Player I playing according to a winning strategy in the $A(a_0, n)$ game, $\|\sum_{i=1}^n x_{(m_1, \ldots, m_i)}\|\leqslant a_0.$  Since $a_0<a$, this is a contradiction. Hence $\sup_n \alpha^\uuu_n(X)\leqslant a_0<\infty$. Therefore if $X$ has property $\textsf{A}_\infty$, $\sup_n \alpha^\uuu_n(X)<\infty$. 
		
		\medskip
		$(ii)$ Fix $a<\alpha_n^\uuu(X)$. Then there exists a $\uuu$-weakly null collection $(x_t)_{t\in T_n}\subset B_X$ such that \[a<\underset{m_1\in\uuu}{\lim} \ldots \underset{m_n\in\uuu}{\lim} \Bigl\|\sum_{i=1}^n x_{(m_1, \ldots, m_i)}\Bigr\|.\]    
		For each $t=(m_1, \ldots, m_n)\in \Ndb^n$, choose $y^*_t\in \frac{1}{2}B_{X^*}$ such that \[\text{Re\ }y^*_t\Bigl(\sum_{i=1}^n x_{(m_1, \ldots, m_i)}\Bigr) = \frac12\Bigl\|\sum_{i=1}^n x_{(m_1, \ldots, m_i)}\Bigr\|.\]    
		Then \[\frac{a}{2}<\underset{m_1\in\uuu}{\lim} \ldots \underset{m_n\in\uuu}{\lim} \text{Re\ }y^*_{(m_1, \ldots, m_n)}\Bigl(\sum_{i=1}^n x_{(m_1, \ldots, m_i)}\Bigr).\]    
		For $t\in \Ndb^n$, set $z^*_t=y^*_t$ and for each $t\in \{\varnothing\}\cup T_{n-1}$, define \[z^*_t = \underset{m_{|t|+1}\in\uuu}{\lim}\ldots \underset{m_n\in\uuu}{\lim} y^*_{t\smallfrown (m_{|t|+1}, \ldots, m_n)},\] 
		where all limits are taken with respect to the weak$^*$-topology. For $(m_1, \ldots, m_k)\in T_n$, define $x^*_{(m_1, \ldots, m_k)} = z^*_{(m_1, \ldots, m_k)}-z^*_{(m_1, \ldots, m_{k-1})}\in B_{X^*}.$ Note that $(x^*_t)_{t\in T_n}\subset B_{X^*}$ is $\uuu$-weak$^*$-null, which implies  that 
		\begin{align*} 
			\sum_{i=1}^n \underset{m_1\in\uuu}{\lim} \ldots \underset{m_i\in\uuu}{\lim} \|x^*_{(m_1, \ldots, m_i)}\| & \leqslant \beta^\uuu_n(X) \underset{m_1\in\uuu}{\lim}\ldots \underset{m_n\in\uuu}{\lim} \Bigl\|\sum_{i=1}^n x^*_{(m_1, \ldots,m_i)}\Bigr\| \\ & = \beta^\uuu_n(X) \underset{m_1\in\uuu}{\lim}\ldots \underset{m_n\in\uuu}{\lim} \|z^*_{(m_1, \ldots, m_n)}-z^*_\varnothing\|\\
			&\leqslant \beta^\uuu_n(X).
		\end{align*} 
		Note that because $(x_t)_{t\in T_n}$ is $\uuu$-weakly null and $(x^*_t)_{t\in T_n}$ is $\uuu$-weak$^*$-null, it holds that for distinct $i,j\in \{1, \ldots, n\}$, \[\underset{m_1\in\uuu}{\lim}\ldots \underset{m_n\in\uuu}{\lim} x^*_{(m_1, \ldots, m_i)}(x_{(m_1, \ldots, m_j)})=0.\]
		Similarly, for each $1\leqslant i\leqslant n$, $\underset{m_1\in\uuu}{\lim}\ldots \underset{m_n\in\uuu}{\lim} z^*_\varnothing(x_{(m_1, \ldots, m_i)})=0.$
		

		Combining the facts above, we can write
		\begin{align*}
			\frac{a}{2} & < \underset{m_1\in\uuu}{\lim}\ldots \underset{m_n\in\uuu}{\lim} \text{Re\ }z^*_{(m_1, \ldots, m_n)}\Bigl(\sum_{i=1}^n x_{(m_1, \ldots, m_i)}\Bigr) \\
			&= \underset{m_1\in\uuu}{\lim}\ldots \underset{m_n\in\uuu}{\lim} \text{Re\ }(z^*_{(m_1, \ldots, m_n)}-z^*_\varnothing)\Bigl(\sum_{i=1}^n x_{(m_1, \ldots, m_i)}\Bigr) \\ 
			& = \underset{m_1\in\uuu}{\lim}\ldots \underset{m_n\in\uuu}{\lim} \text{Re\ }\Bigl(\sum_{i=1}^n x^*_{(m_1, \ldots, m_i)}\Bigr)\Bigl(\sum_{i=1}^n x_{(m_1, \ldots, m_i)}\Bigr) \\
			&=  \sum_{i=1}^n\underset{m_1\in\uuu}{\lim}\ldots \underset{m_n\in\uuu}{\lim}\text{Re\ }x^*_{(m_1, \ldots, m_i)}(x_{(m_1, \ldots, m_i)})\\ 
			&=   \sum_{i=1}^n\underset{m_1\in\uuu}{\lim}\ldots \underset{m_i\in\uuu}{\lim}\text{Re\ }x^*_{(m_1, \ldots, m_i)}(x_{(m_1, \ldots, m_i)}) \\ & \leqslant   \sum_{i=1}^n\underset{m_1\in\uuu}{\lim}\ldots \underset{m_i\in\uuu}{\lim}\|x^*_{(m_1, \ldots, m_i)}\|\leqslant \beta^\uuu_n(X).
		\end{align*}
		Since $a<\alpha_n^\uuu(X)$ was arbitrary, we are done. 
		
		\medskip
		$(iii)$ Fix $b<\beta^\uuu_n(X)$ and $\delta>0$. Then there exists a collection $(x^*_t)_{t\in T_n}\subset B_{X^*}$ which is $\uuu$-weak$^*$-null and \[b\underset{m_1\in\uuu}{\lim}\ldots \underset{m_n\in\uuu}{\lim} \Bigl\|\sum_{i=1}^n x_{(m_1, \ldots, m_i)}^*\Bigr\| < \sum_{i=1}^n \underset{m_1\in\uuu}{\lim}\ldots \underset{m_i\in\uuu}{\lim} \|x^*_{(m_1, \ldots, m_i)}\|.\]  
		Note that this implies that \[\underset{m_1\in\uuu}{\lim}\ldots \underset{m_n\in\uuu}{\lim} \Bigl\|\sum_{i=1}^n x_{(m_1, \ldots, m_i)}^*\Bigr\|>0.\] 
		We define a collection $(x_t)_{t\in T_n}\subset B_X$ which is $\uuu$-weakly null and such that for each $1\leqslant i\leqslant n$, \[\underset{m_1\in\uuu}{\lim}\ldots \underset{m_i\in\uuu}{\lim} \text{Re\ }x^*_{(m_1, \ldots, m_i)}(x_{(m_1, \ldots, m_i)})\geqslant \frac{1}{2}\underset{m_1\in\uuu}{\lim}\ldots \underset{m_i\in\uuu}{\lim} \|x^*_{(m_1, \ldots, m_i)}\|-\delta.\]   To that end, for each  $t\in \{\varnothing\}\cup T_{n-1}$, choose, as it is allowed by Proposition \ref{normingseq}, $(x_{t\smallfrown (m)})_{m=1}^\infty\subset B_X$ to be a $\uuu$-weakly null sequence such that \[\underset{m\in\uuu}{\lim} \text{Re\ }x^*_{t\smallfrown (m)}(x_{t\smallfrown (m)}) \geqslant \frac{1}{2} \underset{m\in\uuu}{\lim} \|x^*_{t\smallfrown (m)}\| - \delta.\]  
		Note that, since $(x_t)_{t\in T_n}$ is $\uuu$-weakly null and $(x^*_t)_{t\in T_n}$ is $\uuu$-weak$^*$-null, we have again that for all $1\le i\neq j \le n$
		\[\underset{m_1\in\uuu}{\lim} \ldots \underset{m_n\in\uuu}{\lim}x^*_{(m_1, \ldots, m_i)}(x_{(m_1, \ldots, m_j)}=0.\]
		Then, we can write 
		\begin{align*} 
			&\frac{b}{2} \underset{m_1\in\uuu}{\lim} \ldots \underset{m_n\in\uuu}{\lim}\Bigl\|\sum_{i=1}^n x_{(m_1, \ldots, m_i)}^*\Bigr\| - \delta n\\ 
			& < \frac{1}{2}\sum_{i=1}^n \underset{m_1\in\uuu}{\lim}\ldots \underset{m_i\in\uuu}{\lim} \|x^*_{(m_1, \ldots, m_i)}\|-\delta n\\ 
			& \leqslant  \sum_{i=1}^n \underset{m_1\in\uuu}{\lim}\ldots \underset{m_i\in\uuu}{\lim} \text{Re\ }x^*_{(m_1, \ldots, m_i)}(x_{(m_1, \ldots, m_i)}) \\  
			&= \underset{m_1\in\uuu}{\lim}\ldots \underset{m_n\in\uuu}{\lim} \text{Re\ }\Bigl(\sum_{i=1}^n x^*_{(m_1, \ldots, m_i)}\Bigr)\Bigl(\sum_{i=1}^n x_{(m_1, \ldots, m_i)}\Bigr) \\ 
			& \leqslant \underset{m_1\in\uuu}{\lim}\ldots \underset{m_n\in\uuu}{\lim} \Bigl\|\sum_{i=1}^n x^*_{(m_1, \ldots, m_i)}\Bigr\|\Bigl\|\sum_{i=1}^n x_{(m_1, \ldots, m_i)}\Bigr\| \\ 
			& \leqslant \alpha_n^\uuu(X) \underset{m_1\in\uuu}{\lim}\ldots \underset{m_n\in\uuu}{\lim} \Bigl\|\sum_{i=1}^n x^*_{(m_1, \ldots, m_i)}\Bigr\|.   
		\end{align*}  
		Since $\underset{m_1\in\uuu}{\lim}\ldots \underset{m_n\in\uuu}{\lim} \Bigl\|\sum_{i=1}^n x_{(m_1, \ldots, m_i)}^*\Bigr\|>0$, and since $\delta>0$ and  $b<\beta^\uuu_n(X)$ were arbitrary, we are done. 
		
	\end{proof}

	\begin{remark} \upshape In item $(i)$ of the preceding proof, we actually showed that if $X^*$ is separable, then for each $n\in\nn$, $\alpha_n^\uuu(X)$ is the infimum of $a>0$ such that Player I has a winning strategy in the $A(a,n)$ game. 
	\end{remark}
	
	We can now turn to the heart of the proof.
	\begin{lemma} \label{a lemma} 
		For any Banach space $X$ with $X^*$ separable and any subspace $Y$ of $X$, \[\alpha_n^\uuu(X)\leqslant 40 \max\{\alpha_n^\uuu(Y), \alpha_n^\uuu(X/Y)\}^2.\] 
	\end{lemma}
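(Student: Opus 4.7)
The strategy is to pass to the dual quantities $\beta_n^\uuu$ via Proposition~\ref{dual} and to split any $\uuu$-weak$^*$-null tree in $B_{X^*}$ into a piece in $Y^\perp\cong (X/Y)^*$, governed by $\beta_n^\uuu(X/Y)$, plus a remainder whose norms are controlled by the restrictions $x_t^*|_Y$, governed by $\beta_n^\uuu(Y)$. Fix $(x_t^*)_{t\in T_n}\subset B_{X^*}$ that is $\uuu$-weak$^*$-null, and set $R := \lim_{m_1}\cdots \lim_{m_n}\|\sum_i x_{(m_1,\ldots,m_i)}^*\|$. Let $y_t^* := x_t^*|_Y \in B_{Y^*}$: since restriction is weak$^*$-weak$^*$-continuous and norm-decreasing on sums, the definition of $\beta_n^\uuu(Y)$ gives $\sum_i \lim \|y_i^*\|_{Y^*}\leq \beta_n^\uuu(Y)\,R$. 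Using the Hahn-Banach identity $\|y_t^*\|_{Y^*} = d(x_t^*,Y^\perp)$, I would then pick $z_t^*\in Y^\perp$ with $\|x_t^*-z_t^*\|\leq \|y_t^*\|+\varepsilon$ and write $w_t^* := z_t^* - x_t^*$, so that $x_t^* = z_t^* - w_t^*$ with $\|w_t^*\|\leq \|y_t^*\|+\varepsilon$ and $\|z_t^*\|\leq 2+\varepsilon$.

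The hard part is that $(z_t^*)$ need not be $\uuu$-weak$^*$-null, so $\beta_n^\uuu(X/Y)$ does not apply directly; a centering procedure is required. For each $s\in T_{n-1}\cup\{\varnothing\}$, weak$^*$-compactness of the $(2+\varepsilon)$-ball in $Y^\perp$ lets me form $\zeta_s^* := \text{weak}^*\text{-}\lim_{m\in\uuu} z_{s\smallfrown(m)}^*\in Y^\perp$. Because $(x_t^*)$ is itself $\uuu$-weak$^*$-null, this $\zeta_s^*$ coincides with $\text{weak}^*\text{-}\lim_m w_{s\smallfrown(m)}^*$; weak$^*$-lower semicontinuity of the norm then yields $\|\zeta_s^*\|\leq \lim_m \|y_{s\smallfrown(m)}^*\|+\varepsilon$. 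This is the crucial observation: the centers are small, inheriting the norm of the $y^*$-tree rather than the larger norm of the $z^*$-tree. I would then replace $z_t^*$ and $w_t^*$ by
\[
\tilde z_t^* := z_t^* - \zeta_{t|_{|t|-1}}^*,\qquad \tilde w_t^* := w_t^* - \zeta_{t|_{|t|-1}}^*,
\]
so that $x_t^* = \tilde z_t^* - \tilde w_t^*$ still, $\tilde z_t^*\in Y^\perp$ with $\|\tilde z_t^*\|\leq 3+2\varepsilon$, the tree $(\tilde z_t^*)$ is now $\uuu$-weak$^*$-null, and $\lim_m \|\tilde w_{s\smallfrown(m)}^*\|\leq 2\lim_m\|y_{s\smallfrown(m)}^*\|+2\varepsilon$.

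The remainder is bookkeeping. Applying $\beta_n^\uuu(X/Y)$ through the weak$^*$-weak$^*$ isometric isomorphism $Y^\perp\cong (X/Y)^*$ to $\tilde z_t^*/(3+2\varepsilon)$ yields $\sum_i\lim\|\tilde z_i^*\|\leq \beta_n^\uuu(X/Y)\cdot\lim\|\sum_i \tilde z_i^*\|$. Writing $\sum_i\tilde z_i^* = \sum_i x_i^* + \sum_i \tilde w_i^*$, using the triangle inequality, and inserting the estimate of the previous paragraph together with the bound $\sum_i\lim\|y_i^*\|\leq \beta_n^\uuu(Y)\,R$, I get $\lim\|\sum_i\tilde z_i^*\|\leq R(1+2\beta_n^\uuu(Y)) + O(\varepsilon)$. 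Finally, $\|x_t^*\|\leq \|\tilde z_t^*\|+\|\tilde w_t^*\|$ and $\varepsilon\to 0$ give
\[
\beta_n^\uuu(X)\leq \beta_n^\uuu(X/Y) + 2\beta_n^\uuu(X/Y)\beta_n^\uuu(Y) + 2\beta_n^\uuu(Y),
\]
and Proposition~\ref{dual}(ii)--(iii) with $\alpha := \max\{\alpha_n^\uuu(Y), \alpha_n^\uuu(X/Y)\}$ converts this into $\alpha_n^\uuu(X)\leq 2\beta_n^\uuu(X)\leq 40\alpha^2$, with the constant $40$ leaving enough slack to absorb also the case $\alpha<1$. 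The decisive step of the argument is the centering: without it the $Y^\perp$-part is useless, and it is precisely the $\uuu$-weak$^*$-nullity of the original tree that forces the correction terms $\zeta_s^*$ to be small.
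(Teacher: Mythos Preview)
Your proof is correct and follows the same strategy as the paper's: pass to the dual quantities $\beta_n^\uuu$ via Proposition~\ref{dual}, approximate each $x_t^*$ by an element of $Y^\perp$, and use the centering trick (subtracting the weak$^*$-limit along $\uuu$, exploiting that $(x_t^*)$ is itself $\uuu$-weak$^*$-null) to make the $Y^\perp$-tree $\uuu$-weak$^*$-null while keeping the correction terms controlled by $\|x_t^*|_Y\|_{Y^*}=\|x_t^*\|_{X^*/Y^\perp}$. The only difference is organizational: the paper runs a dichotomy on whether $\sum_i\lim\|x_i^*\|_{X^*/Y^\perp}$ is large or small relative to $\sum_i\lim\|x_i^*\|$, whereas you feed the bound $\sum_i\lim\|x_i^*|_Y\|\leq \beta_n^\uuu(Y)\,R$ directly into the $\beta_n^\uuu(X/Y)$-estimate, arriving at $\beta_n^\uuu(X)\leq 3\beta+2\beta^2\leq 5\beta^2$ (for $\beta\geq 1$) and hence the same $40\alpha^2$.
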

	
	\begin{proof}   
		If $X$ is finite-dimensional, then $\alpha^\uuu_n(X)=\alpha^\uuu_n(Y)=\alpha_n(X/Y)=0$, so assume $X$ is infinite-dimensional. In this case, at least one of $Y$, $X/Y$ must also be infinite-dimensional, which means 
		\[b:=\max\{\beta^\uuu_n(Y), \beta_n^\uuu(X/Y)\} \geqslant 1.\]  
		Fix $(x^*_t)_{t\in T_n}\subset B_{X^*}$ $\uuu$-weak$^*$-null. We will define a bounded, $\uuu$-weak$^*$-null  $(y^*_t)_{t\in T_n}\subset Y^\perp$ such that for each $1\leqslant i\leqslant n$, 
		\[\underset{m_1\in\uuu}{\lim}\ldots \underset{m_i\in\uuu}{\lim} \|x^*_{(m_1, \ldots, m_i)}-y^*_{(m_1, \ldots, m_i)}\| \leqslant 2 \underset{m_1\in\uuu}{\lim}\ldots \underset{m_i\in\uuu}{\lim} \|x^*_{(m_1, \ldots, m_i)}\|_{X^*/Y^\perp}.\]   
		To that end, for each $t=(m_1, \ldots, m_i)\in T_n$, fix $w^*_t \in Y^\perp$ such that $$\|x^*_t-w^*_t\|<\|x^*_t\|_{X^*/Y^\perp}+2^{-m_i}$$ 
		and note that $w_t^*\in 3 B_{Y^\perp}$. For $t\in \{\varnothing\}\cup T_{n-1}$, let $v^*_t=\text{weak}^*\text{-}\underset{m\in\uuu}{\lim} w^*_{t\smallfrown (m)}\in 3B_{Y^\perp}$ and let $y^*_{t\smallfrown (m)}=w^*_{t\smallfrown (m)}-v^*_t$. It is clear that $(y^*_t)_{t\in T_n}\subset Y^\perp$ is bounded and $\uuu$-weak$^*$-null. Note that for any $t\in\{\varnothing\}\cup T_{n-1}$, weak$^*$-$\underset{m\in\uuu}{\lim} (w^*_{t\smallfrown (m)}-x^*_{t\smallfrown (m)})=v^*_t-0=v^*_t$. By weak$^*$-lower semicontinuity of the norm, it follows that \[\|v^*_t\|\leqslant \underset{m\in\uuu}{\lim} \|w^*_{t\smallfrown (m)}-x^*_{t\smallfrown (m)}\| = \underset{m\in\uuu}{\lim} \|x^*_{t\smallfrown (m)}\|_{X^*/Y^\perp}.\]    
		Therefore 
		\[\underset{m\in\uuu}{\lim} \|x^*_{t\smallfrown (m)}-y^*_{t\smallfrown (m)}\| \leqslant \underset{m\in\uuu}{\lim} \|x^*_{t\smallfrown (m)}-w^*_{t\smallfrown (m)}\|+\|v^*_t\|\leqslant 2 \underset{m\in\uuu}{\lim} \|x^*_{t\smallfrown (m)}\|_{X^*/Y^\perp}.\]   
		From this it follows that
		\[\underset{m_1\in\uuu}{\lim}\ldots \underset{m_i\in\uuu}{\lim} \|x^*_{(m_1, \ldots, m_i)}-y^*_{(m_1, \ldots, m_i)}\| \leqslant 2 \underset{m_1\in\uuu}{\lim}\ldots \underset{m_i\in\uuu}{\lim} \|x^*_{(m_1, \ldots, m_i)}\|_{X^*/Y^\perp}.\]  
		Since $(y^*_t)_{t\in T_n}\subset Y^\perp = (X/Y)^*$ is $\uuu$-weak$^*$-null and bounded, by homogeneity, we have that
		\[b\underset{m_1\in\uuu}{\lim}\ldots \underset{m_n\in\uuu}{\lim}\Bigl\|\sum_{i=1}^n y^*_{(m_1, \ldots, m_i)}\Bigr\|  \geqslant \sum_{i=1}^n \underset{m_1\in\uuu}{\lim}\ldots \underset{m_i\in\uuu}{\lim} \|y^*_{(m_1, \ldots, m_i)}\|.\]   
		
		We start with the easy case and suppose first that 
		\[\sum_{i=1}^n \underset{m_1\in\uuu}{\lim}\ldots \underset{m_i\in\uuu}{\lim} \|x^*_{(m_1, \ldots, m_i)}\|_{X^*/Y^\perp} \geqslant \frac{1}{1+4b}\sum_{i=1}^n \underset{m_1\in\uuu}{\lim}\ldots \underset{m_i\in\uuu}{\lim} \|x^*_{(m_1, \ldots, m_i)}\|.\]   
		Since $(x^*_t|_Y)_{t\in T_n}\subset B_{Y^*}=B_{X^*/Y^\perp}$ is $\uuu$-weak$^*$-null,  
		\begin{align*} 
			b\underset{m_1\in\uuu}{\lim}\ldots \underset{m_n\in\uuu}{\lim} \Bigl\|\sum_{i=1}^n x^*_{(m_1, \ldots, m_i)}\Bigr\| & \geqslant  b\underset{m_1\in\uuu}{\lim}\ldots \underset{m_n\in\uuu}{\lim} \Bigl\|\sum_{i=1}^n x^*_{(m_1, \ldots, m_i)}\Bigr\|_{X^*/Y^\perp} \\ & \geqslant \sum_{i=1}^n \underset{m_1\in\uuu}{\lim}\ldots \underset{m_i\in\uuu}{\lim} \|x^*_{(m_1, \ldots, m_i)}\|_{X^*/Y^\perp} \\ & \geqslant \frac{1}{1+4b}\sum_{i=1}^n \underset{m_1\in\uuu}{\lim}\ldots \underset{m_i\in\uuu}{\lim} \|x^*_{(m_1, \ldots, m_i)}\|.  
		\end{align*}  
		
		Next suppose that 
		\[\sum_{i=1}^n \underset{m_1\in\uuu}{\lim}\ldots \underset{m_i\in\uuu}{\lim} \|x^*_{(m_1, \ldots, m_i)}\|_{X^*/Y^\perp} < \frac{1}{1+4b}\sum_{i=1}^n \underset{m_1\in\uuu}{\lim}\ldots \underset{m_i\in\uuu}{\lim} \|x^*_{(m_1, \ldots, m_i)}\|.\] 
		Then since $b\geqslant 1$, 
		\begin{align*}
			& b\underset{m_1\in\uuu}{\lim}\ldots \underset{m_n\in\uuu}{\lim} \Bigl\|\sum_{i=1}^n x^*_{(m_1, \ldots, m_i)}\Bigr\|\geqslant b\underset{m_1\in\uuu}{\lim}\ldots \underset{m_n\in\uuu}{\lim} \Bigl\|\sum_{i=1}^n y^*_{(m_1, \ldots, m_i)}\Bigr\|\\
			&-b\sum_{i=1}^n \underset{m_1\in\uuu}{\lim}\ldots \underset{m_i\in\uuu}{\lim} \|x^*_{(m_1, \ldots, m_i)}-y^*_{(m_1, \ldots, m_i)}\| \\ 
			& \geqslant \sum_{i=1}^n \underset{m_1\in\uuu}{\lim}\ldots \underset{m_i\in\uuu}{\lim} \|y^*_{(m_1, \ldots, m_i)}\| -2b\sum_{i=1}^n \underset{m_1\in\uuu}{\lim}\ldots \underset{m_i\in\uuu}{\lim} \|x^*_{(m_1, \ldots, m_i)}\|_{X^*/Y^\perp} \\ 
			& \geqslant \sum_{i=1}^n \underset{m_1\in\uuu}{\lim}\ldots \underset{m_i\in\uuu}{\lim} \|x^*_{(m_1, \ldots, m_i)}\| - \sum_{i=1}^n \underset{m_1\in\uuu}{\lim}\ldots \underset{m_i\in\uuu}{\lim} \|x^*_{(m_1, \ldots, m_i)}-y^*_{(m_1, \ldots, m_i)}\| \\ 
			& - 2b\sum_{i=1}^n \underset{m_1\in\uuu}{\lim}\ldots \underset{m_i\in\uuu}{\lim} \|x^*_{(m_1, \ldots, m_i)}\|_{X^*/Y^\perp} \\ 
			& \geqslant \sum_{i=1}^n \underset{m_1\in\uuu}{\lim}\ldots \underset{m_i\in\uuu}{\lim} \|x^*_{(m_1, \ldots, m_i)}\| -4b\sum_{i=1}^n \underset{m_1\in\uuu}{\lim}\ldots \underset{m_i\in\uuu}{\lim} \|x^*_{(m_1, \ldots, m_i)}\|_{X^*/Y^\perp}  \\ 
			& \geqslant \sum_{i=1}^n \underset{m_1\in\uuu}{\lim}\ldots \underset{m_i\in\uuu}{\lim} \|x^*_{(m_1, \ldots, m_i)}\| -\frac{4b}{1+4b}\sum_{i=1}^n \underset{m_1\in\uuu}{\lim}\ldots \underset{m_i\in\uuu}{\lim} \|x^*_{(m_1, \ldots, m_i)}\| \\ & = \frac{1}{1+4b}\sum_{i=1}^n \underset{m_1\in\uuu}{\lim}\ldots \underset{m_i\in\uuu}{\lim} \|x^*_{(m_1, \ldots, m_i)}\|. 
		\end{align*} 
		Combining the two previous paragraphs we get 
		\[\beta_n^\uuu(X)\leqslant b(1+4b)\leqslant 5b^2 = 5 \max\{\beta_n^\uuu(Y), \beta_n^\uuu(X/Y)\}^2.\] 
		Combining this inequality with items $(ii)$ and $(iii)$ of Proposition \ref{dual} yields \[\alpha_n^\uuu(X)\leqslant 40\max\{\alpha_n^\uuu(Y), \alpha_n^\uuu(X/Y)\}^2.\]
	\end{proof}
	
	We can now state and prove our result.
	\begin{theorem} The property ${\emph{\textsf{A}}}_\infty$ is a three-space property. 
	\end{theorem}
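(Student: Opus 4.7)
The plan is to assemble the theorem from the machinery already set up, with Lemma \ref{a lemma} doing all the heavy lifting. The first step is the separable reduction: since Proposition \ref{stability} shows that $\textsf{A}_\infty$ passes to subspaces, quotients, and isomorphs, and Theorem \ref{separabledetermination} shows that $\textsf{A}_\infty$ is separably determined, Lemma \ref{separable reduction} reduces the problem to the case where $X$, $Y$, and $X/Y$ are all separable.

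Next, I would establish that $X^*$ is separable in this reduced setting. Since $Y, X/Y \in \textsf{A}_\infty \subset \textsf{D}_1$ by Theorem \ref{containments}(i), both $Y$ and $X/Y$ are Asplund. Being separable Asplund, $Y^*$ is separable, and $(X/Y)^* = Y^\perp$ is separable. Since $X^*/Y^\perp$ is isometrically isomorphic to $Y^*$, the space $X^*$ sits in a short exact sequence between two separable spaces, hence $X^*$ is itself separable. This puts us in a position to invoke the dual characterizations from Proposition \ref{dual}.

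The final step is to combine Lemma \ref{a lemma} with Proposition \ref{dual}(i). By the latter, the assumptions $Y, X/Y \in \textsf{A}_\infty$ (combined with the separability of $Y^*$ and $(X/Y)^*$) give finite suprema $\sup_n \alpha_n^\uuu(Y) < \infty$ and $\sup_n \alpha_n^\uuu(X/Y) < \infty$. Setting $M = \max\{\sup_n \alpha_n^\uuu(Y), \sup_n \alpha_n^\uuu(X/Y)\} < \infty$ and applying Lemma \ref{a lemma} uniformly in $n$ yields
\[
\sup_n \alpha_n^\uuu(X) \leqslant 40 M^2 < \infty.
\]
Now Proposition \ref{dual}(i), applied to $X$ (which we have just shown has separable dual), gives $X \in \textsf{A}_\infty$, completing the proof.

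There is no real obstacle remaining: the technical content was absorbed into Lemma \ref{a lemma} (the quadratic estimate on $\alpha_n^\uuu$ via duality through $\beta_n^\uuu$ and approximation of $x^*_t$ by elements of $Y^\perp$), and the separable reduction was packaged into Lemma \ref{separable reduction}. The only subtle point to flag clearly in the write-up is the need to verify that $X^*$ is separable before using the dual-side characterizations of $\textsf{A}_\infty$, since Proposition \ref{dual} is stated under that hypothesis; this follows as above from Asplundness of $Y$ and $X/Y$.
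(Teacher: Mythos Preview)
Your proposal is correct and follows essentially the same route as the paper: reduce to the separable case via Proposition~\ref{stability}, Theorem~\ref{separabledetermination}, and Lemma~\ref{separable reduction}; observe that $X^*$ is separable because $Y^*\cong X^*/Y^\perp$ and $(X/Y)^*\cong Y^\perp$ are separable (Asplundness from $\textsf{A}_\infty\subset\textsf{D}_1$); then apply Lemma~\ref{a lemma} together with Proposition~\ref{dual}(i). The only cosmetic difference is that the paper first establishes the separable case and then invokes the reduction, whereas you invoke the reduction first.
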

	
	\begin{proof} Assume first that $Y$ is a closed subspace of a Banach space $X$ such that $Y$ and $X/Y$ are in $\textsf{A}_\infty \cap \textsf{Sep}$. Then $Y$ and $X/Y$ are separable Asplund spaces and $Y^*=X^*/Y^\perp$ and $(X/Y)^*=Y^\perp$ are separable. So $X^*$ is separable and we can apply Lemma \ref{a lemma} and item $(i)$ of Proposition \ref{dual} to deduce that $X$ has $\textsf{A}_\infty$. We have shown that membership in $\textsf{A}_\infty \cap \textsf{Sep}$ is a 3SP. It then follows from Theorem \ref{separabledetermination} and Lemma \ref{separable reduction} that $\textsf{A}_\infty$ is a 3SP.
		
	\end{proof}
	
	\begin{remark}
		Since reflexivity is also a three-space property (cf \cite{KreinSmulian1940}), we can therefore deduce that property HFC$_{\infty}$ of \cite{fov}, which is known to be equivalent to being reflexive and asymptotic-$c_0$ (cf \cite{BLMSJIMJ2021}), is a three-space property. However, the question of whether or not property HC$_{\infty}$ is a three-space property remains open.
	\end{remark}

	\section{Non-linear stabilities}
	
	We first define the non-linear equivalences between Banach spaces that we will discuss in this section. 
	
	\begin{definition} Let $(M,d)$ and $(N,\delta)$ be two metric spaces. A map $f:M \to N$ is a 
		\emph{Lipschitz equivalence (or Lipschitz isomorphism)} from $M$ to $N$ if $f$ is a Lipschitz bijection from $M$ to $N$ with Lipschitz inverse. 
		If there exists a Lipschitz equivalence from $M$ to $N$, we say that $M$ and $N$ are {\it Lipschitz equivalent (or Lipschitz isomorphic)} and we denote $M \buildrel {L}\over {\sim} N$.
	\end{definition}
	
	\begin{definition} Let $(M,d)$ and $(N,\delta)$ be two unbounded metric spaces and $f:M\to N$ be a map. We say that $f$ is  \emph{coarse Lipschitz} if there exist $A,B\ge 0$ such that 
		$$\forall x,y \in M,\  \ \delta(f(x),f(y))\le Ad(x,y)+B.$$
		We say that $f$ is a \emph{coarse Lipschitz equivalence} from $M$ to $N$, if it is coarse Lipschitz and there exists a coarse Lipschitz map $g:N\to M$ and a constant $C\ge 0$ such that
		$$\forall x\in M\ \ d\big((g\circ f)(x),x\big)\le C\ \ {\rm and} \ \ \forall y\in N\ \ \delta\big((f\circ g)(y),y\big)\le C.$$
		If there exists a coarse Lipschitz equivalence from $M$ to $N$, we say that $M$ and $N$ are {\it coarse Lipschitz equivalent} and denote $M \buildrel {CL}\over {\sim} N$.
	\end{definition}
	
	This notion of coarse Lipschitz equivalent metric spaces is the same as the notion of quasi-isometric metric spaces introduced by Gromov in \cite{Gromov1987} (see also the book \cite{GhysDelaHarpe} by E. Ghys and P. de la Harpe).
	
	\medskip We now turn to the notion of net in a metric space.
	
	\begin{definition} Let $0<a\le b$. An \emph{$(a,b)$-net} in the metric space $(M,d)$ is a subset $\cal M$ of $M$ such that for every $z\neq z'$ in $\cal M$,  $d(z,z')\ge a$ and for
		every $x$ in $M$, $d(x,\cal M)< b$.\\
		Then a subset $\cal M$ of $M$ is a \emph{net} in $M$ if it is an $(a,b)$-net for some $0<a\le b$.
	\end{definition}
	
	Let us now give two technical equivalent formulations of the notion of coarse Lipschitz equivalence between Banach spaces. We refer to \cite{DaletLancien} or \cite{GLZ2014} for details.
	
	\begin{proposition}\label{CLE} Let $X$ and $Y$ be two Banach spaces and let $f:X\to Y$ be a  map. The following assertions are equivalent.
		
		\begin{enumerate}[(i)]
			\item The map $f$ is a coarse Lipschitz equivalence.
			\item There exist $A_0>0$ and $K\ge 1$ such that for all $A\ge A_0$ and all maximal $A$-separated subset $\cal M$ of $X$, $\cal N=f(\cal M)$ is a net in $Y$ and
			$$\forall x,x'\in \cal M\ \ \ \frac{1}{K}\|x-x'\|\le \|f(x)-f(x')\|\le K\|x-x'\|.$$
			\item There exist two \underline{continuous} coarse Lipschitz maps $\varphi:X\to Y$ and $\psi:Y\to X$ and a constant $C\ge 0$ such that $\|\varphi(x)-f(x)\|\le C$ for all $x$ in $X$ and
			$$\forall x\in X\ \ \|(\psi\circ \varphi)(x)-x\|\le C\ \ {\rm and} \ \ \forall y\in Y\ \ \|(\varphi\circ \psi)(y)-y\|\le C.$$
		\end{enumerate}
	\end{proposition}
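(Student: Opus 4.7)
The plan is to establish the implications $(i) \Rightarrow (ii) \Rightarrow (iii) \Rightarrow (i)$. The outer implications are essentially bookkeeping, while the middle one is the only substantive step, requiring a continuous interpolation of $f$ via partitions of unity.

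For $(i) \Rightarrow (ii)$, suppose $f$ is coarse Lipschitz with constants $(A_1,B_1)$ and admits a coarse Lipschitz quasi-inverse $g:Y\to X$ with constants $(A_2,B_2)$ and matching constant $C$. Choose $A_0$ large enough that the additive terms $B_1,B_2,C$ are dominated by the multiplicative ones whenever distances exceed $A_0$. Let $\mathcal{M}\subset X$ be a maximal $A$-separated set with $A\geq A_0$ and fix $x\neq x'$ in $\mathcal{M}$. The upper bound $\|f(x)-f(x')\|\leq (A_1+B_1/A)\|x-x'\|$ is immediate, while the chain $\|x-x'\|\leq 2C+\|g(f(x))-g(f(x'))\|\leq 2C+A_2\|f(x)-f(x')\|+B_2$ yields $\|f(x)-f(x')\|\geq K^{-1}\|x-x'\|$ for a suitable $K$ depending only on the constants. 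This is the bi-Lipschitz estimate on $\mathcal{M}$. For the net property of $\mathcal{N}=f(\mathcal{M})$: separation of order $A/K$ follows from the lower bound, and density follows from the $A$-density of $\mathcal{M}$, since for $y\in Y$ one picks $x\in \mathcal{M}$ with $\|x-g(y)\|<A$ and estimates $\|y-f(x)\|\leq \|y-f(g(y))\|+\|f(g(y))-f(x)\|\leq C+A_1A+B_1$.

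For $(ii) \Rightarrow (iii)$, fix $A\geq A_0$ and the corresponding $\mathcal{M}$, so that $\mathcal{N}=f(\mathcal{M})$ is, say, an $(a,b)$-net in $Y$. The idea is a standard partition-of-unity extension. Let $(\rho_x)_{x\in\mathcal{M}}$ be a uniformly Lipschitz partition of unity on $X$ subordinated to the cover by balls $B(x,2A)$, built from the truncated distance to $\mathcal{M}\setminus\{x\}$, and define
\[
\varphi(z)=\sum_{x\in\mathcal{M}}\rho_x(z)\,f(x),
\]
a locally finite sum. Then $\varphi$ is continuous; it agrees with $f$ on $\mathcal{M}$ and stays within bounded distance of $f$ on $X$; and $\varphi$ is coarse Lipschitz because on any ball $B(z,2A)$ only boundedly many indices contribute, with images of diameter $O(KA)$. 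A symmetric construction on $Y$, based on the inverse correspondence $f(x)\mapsto x$ between $\mathcal{N}$ and $\mathcal{M}$ and a uniformly Lipschitz partition of unity subordinated to balls of radius $2b$ around a maximal separated refinement of $\mathcal{N}$, produces a continuous coarse Lipschitz map $\psi:Y\to X$. The bi-Lipschitz correspondence between $\mathcal{M}$ and $\mathcal{N}$, combined with the bounded supports of the partition functions, yields that $\psi\circ\varphi$ and $\varphi\circ\psi$ remain within bounded distance of the identity.

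Finally, $(iii) \Rightarrow (i)$ is immediate by taking $g=\psi$: since $f$ lies within a bounded distance of the coarse Lipschitz map $\varphi$, it is itself coarse Lipschitz, and the bounds $\|g(f(x))-x\|\leq \|\psi(\varphi(x))-x\|+A_\psi\|f(x)-\varphi(x)\|+B_\psi$ and its counterpart on $Y$ are both finite. The main obstacle is the construction in $(ii)\Rightarrow(iii)$: one has to carefully arrange uniform Lipschitz control on the partition functions and verify that the near-inverse property survives the averaging, a calculation carried out in detail in \cite{DaletLancien} and \cite{GLZ2014}.
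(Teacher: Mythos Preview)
The paper does not actually prove this proposition: it states the result and refers the reader to \cite{DaletLancien} and \cite{GLZ2014} for the details. Your sketch follows the standard route taken in those references --- the cycle $(i)\Rightarrow(ii)\Rightarrow(iii)\Rightarrow(i)$, with the partition-of-unity interpolation as the only nontrivial step --- and is correct as an outline, so there is nothing to compare against in the paper itself.
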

	
	The following results were obtained by Godefroy, Kalton, and the third-named author in  \cite{GKL2000} and \cite{GKL2001}.
	
	\begin{theorem} Let $p\in (1,\infty]$. 
		\begin{enumerate}
			\item The class ${\emph{\textsf{T}}}_p$ is stable under Lipschitz equivalences.
			\item The class ${\emph{\textsf{P}}}_p$ is stable under coarse Lipschitz equivalences.
			\item The class ${\emph{\textsf{A}}}_\infty={\emph{\textsf{N}}}_\infty$ is stable under coarse Lipschitz equivalences.
		\end{enumerate}
	\end{theorem}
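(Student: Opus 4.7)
The approach for all three parts is to express each class by a quantitative invariant (a renorming modulus or the Szlenk derivation) that can be tracked across the non-linear equivalence via two established pieces of machinery: Gorelik's principle for Lipschitz equivalences, and Kalton's approximate midpoint method for coarse Lipschitz equivalences. In all three cases the problem reduces to the separable setting via Theorem~\ref{separabledetermination} and Lemma~\ref{separable reduction}.

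For (1), I would invoke Theorem~\ref{ttheorem} to reformulate $\textsf{T}_p$ as $p$-AUS-ability (or AUF-ability when $p=\infty$). Given a Lipschitz equivalence $f:X\to Y$ of separable spaces, the plan is to apply Gorelik's principle: for any finite-codimensional subspace $E$ of $X$ and sufficiently large $R$, the image $f(RB_X)$ contains, modulo a compact perturbation, a ball of a finite-codimensional subspace $F$ of $Y$. This allows one to select weakly null vectors in $F$ whose preimages are controlled in norm by the Lipschitz constant of $f^{-1}$. Composing with the defining upper $\ell_p$ estimate for weakly null trees in $X$ then produces the corresponding estimate in $Y$, so $Y\in\textsf{T}_p$.

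For (2), the plan is to use Theorem~\ref{ptheorem}(v): $X\in \textsf{P}_p$ iff $\textsf{p}(X)\leqslant q$. Thus it suffices to prove that the Szlenk power type is invariant under coarse Lipschitz equivalence. The tool is Kalton's approximate midpoint lemma: for a coarse Lipschitz $f:X\to Y$, the set of $\varepsilon$-midpoints of any pair $(f(a),f(b))$ contains, up to a uniform additive error, the $f$-image of the $c\varepsilon$-midpoints of $(a,b)$. Iterating this on the weak$^*$-slices that define the Szlenk derivations yields an inequality of the form $Sz(Y,\varepsilon)\leqslant Sz(X,c'\varepsilon)^{K}$ for constants depending only on $f$, from which $\textsf{p}(X)=\textsf{p}(Y)$ follows, and hence (2).

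Part (3) is the most delicate, and I expect the main obstacle to lie here. The class $\textsf{A}_\infty=\textsf{N}_\infty$ corresponds to having \emph{summable} Szlenk index, a condition strictly finer than the mere growth-rate control used in (2). The plan is to push the approximate midpoint analysis further so as to track not just the rate of growth of the $\varepsilon$-Szlenk index, but the precise quantity $\sum_i \varepsilon_i$ arising in the summability criterion (Theorem~\ref{atheorem}(iv) with $q=1$). The technical issue is that each application of the approximate midpoint lemma introduces multiplicative constants, and ensuring that the resulting summability bound in $Y$ remains finite requires a careful additive bookkeeping that absorbs these constants uniformly over all iterated $\varepsilon$-slice decompositions. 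Granting this refinement, the summable Szlenk index of $X$ transfers to $Y$, completing the proof.
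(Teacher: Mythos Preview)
Your approach to part (1) via the Gorelik principle is essentially correct and is the one used in \cite{GKL2000}; your formulation of Gorelik is slightly imprecise (the conclusion is that $f(rB_E)$ contains, up to a compact perturbation, a small ball of $Y$ itself, not of a finite-codimensional subspace of $Y$), but the tool is right.

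The genuine gap is in parts (2) and (3). The approximate midpoint lemma gives information about the \emph{primal} space: it locates approximate midpoints of image pairs inside images of approximate midpoints. There is no mechanism by which iterating it ``on the weak$^*$-slices that define the Szlenk derivations'' yields an inequality of the form $Sz(Y,\varepsilon)\leqslant Sz(X,c'\varepsilon)^K$; the midpoint method never touches the dual, and no such argument appears in the literature. The paper's route (following \cite{GKL2001} and \cite{DaletLancien}) is entirely different: one uses Proposition~\ref{CLE}$(iii)$ to replace the coarse Lipschitz equivalence by a \emph{continuous} coarse Lipschitz pair, which makes the Gorelik principle applicable. Gorelik then gives a renorming transfer: there are constants $K,M$ such that for every $\varepsilon>0$ there is an equivalent norm $|\cdot|$ on $Y$ with $\|\cdot\|_Y\leqslant |\cdot|\leqslant M\|\cdot\|_Y$ and $\overline{\rho}_{|\cdot|}(KM^2\sigma)\leqslant \overline{\rho}_X(\sigma)+\varepsilon$ for all $\sigma\in[0,1]$. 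Parts (2) and (3) then follow immediately from the renorming characterizations of $\textsf{P}_p$ (Theorem~\ref{ptheorem}) and $\textsf{N}_\infty=\textsf{A}_\infty$ (Theorem~\ref{ntheorem}).

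In particular, (3) is not the delicate part you anticipate: once the renorming transfer is in hand, summable Szlenk index is no harder to move across than power type, because the transfer controls the whole modulus $\overline{\rho}$, not just its growth rate.
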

	
	In fact, statements (2) and (3) are only proved for uniform homeomorphisms in \cite{GKL2001} in the separable case. The adaptation for coarse Lipschitz equivalences relies on characterization $(iii)$ in Proposition \ref{CLE}, which allows to apply the so-called Gorelik principle (see also \cite{DaletLancien} for details). Then the non-separable case can easily be deduced by a standard separable saturation argument combined with the separable determination of these properties. It is then natural to wonder about the non-linear stability of the classes $\textsf{A}_p$ and $\textsf{N}_p$ for $1<p<\infty$. The results we have detailed in Section \ref{relations} together  with a careful examination of the statements in \cite{GKL2001} or \cite{DaletLancien} will allow us to easily obtain strong new stability results. We start with the following.
	
	\begin{theorem} For any $p \in (1,\infty)$, the class ${\emph{\textsf{A}}}_p$ is stable under coarse Lipschitz equivalences.
	\end{theorem}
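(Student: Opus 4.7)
The plan is to reduce the problem to verifying the renorming characterization (iii) of Theorem~\ref{atheorem} for $Y$, using the corresponding characterization on $X$ together with the Gorelik-type transfer principles developed in \cite{GKL2001} and \cite{DaletLancien}. The key conceptual point is that condition (iii) only demands control of the modulus $\overline{\rho}$ on the interval $[\tau,1]$ rather than near $0$, which is exactly the range of $\sigma$ preserved by coarse Lipschitz equivalences; this is why $\textsf{A}_p$ behaves better than $\textsf{T}_p$, which requires control of $\overline{\rho}$ arbitrarily close to $0$ in a single fixed norm.

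First, I would reduce to the separable setting. By Theorem~\ref{separabledetermination}, membership in $\textsf{A}_p$ is separably determined, and given a coarse Lipschitz equivalence $f\colon X\to Y$ with $X\in\textsf{A}_p$, a standard saturation argument (building an increasing sequence of separable subspaces of $X$ and $Y$ that are ``coupled'' under $f$ and $g$, where $g$ is a coarse Lipschitz inverse of $f$) lets me assume both $X$ and $Y$ are separable. Then I fix an arbitrary separable subspace $Y_0\le Y$; by separable determination it suffices to show $Y_0\in\textsf{A}_p$.

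Second, I would invoke the Gorelik-principle machinery. The essential quantitative output of the Godefroy--Kalton--Lancien argument (see the proof of the main theorem of \cite{GKL2001}, adapted as in \cite{DaletLancien} to the coarse Lipschitz category via the continuous reformulation in Proposition~\ref{CLE}) can be stated as follows: if $X$ admits an equivalent norm $|\cdot|_X$ with $M^{-1}\|\cdot\|_X\le|\cdot|_X\le M\|\cdot\|_X$ and $\overline{\rho}_{|\cdot|_X}(\sigma)\le C\sigma^p$ for all $\sigma\ge\tau$, and if $X\buildrel{CL}\over{\sim}Y$ with fixed coarse Lipschitz constants, then $Y$ admits an equivalent norm $|\cdot|_Y$ with $(M')^{-1}\|\cdot\|_Y\le|\cdot|_Y\le M'\|\cdot\|_Y$ and $\overline{\rho}_{|\cdot|_Y}(\sigma)\le C'\sigma^p$ for all $\sigma\ge \tau'$, where $M',C'$ depend only on $M$, $C$, and the coarse Lipschitz constants, and where $\tau'=\Phi(\tau)$ with $\Phi$ a non-decreasing function satisfying $\Phi(\tau)\to 0$ as $\tau\to 0$. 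Applying Theorem~\ref{atheorem}(iii) for $X$ to parameter $\Phi^{-1}(\tau')$ (for any prescribed $\tau'\in(0,1]$) then delivers the required norm on $Y$, yielding characterization (iii) for $Y$ and hence $Y\in\textsf{A}_p$.

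The main obstacle is verifying that the Gorelik-principle argument in \cite{GKL2001}/\cite{DaletLancien}, originally presented to transfer a single $p$-AUS norm, does in fact provide the parametric statement above with constants $M'$ and $C'$ that are \emph{uniform in $\tau$} and depend only on the coarse Lipschitz data and on $M,C$. This requires inspecting the proof and checking that the estimates of the Gorelik-type compactness argument, applied at scale $\sigma\ge \tau'$, only rely on the modulus hypothesis at scale $\sigma\ge\tau$ for a suitable $\tau=\Phi^{-1}(\tau')$; none of the estimates propagate through scales smaller than $\tau$, which is why the missing control of $\overline{\rho}_X$ near $0$ is not an issue. Once this uniformity is in hand, the conclusion is immediate from our new renorming characterization, with the stability of $\textsf{N}_p$ following by a completely analogous (and in fact slightly simpler) argument from Theorem~\ref{ntheorem}(iii).
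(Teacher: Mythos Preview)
Your proposal is correct and follows essentially the same approach as the paper: use the new renorming characterization (iii) of Theorem~\ref{atheorem} together with the Gorelik-type transfer from \cite{GKL2001}/\cite{DaletLancien}. The paper's proof is, however, more direct on two points. First, no separable reduction is performed: Corollary~6.7 of \cite{DaletLancien} is invoked in full generality, so your saturation step (whose phrasing is slightly tangled---once $Y$ is already assumed separable there is no need to pass to a further separable $Y_0$) is unnecessary. Second, the ``main obstacle'' you identify---uniformity in $\tau$ of the output constants $M',C'$---dissolves once one quotes the precise form of that corollary: there is a universal $K$ and a constant $M$ depending only on the coarse Lipschitz data of $f$ such that for every $\eps>0$ one obtains a norm $|\cdot|$ on $Y$ with $\|\cdot\|_Y\le|\cdot|\le M\|\cdot\|_Y$ and $\overline{\rho}_{|\cdot|}(KM^2\sigma)\le \overline{\rho}_X(\sigma)+\eps$ for all $\sigma\in[0,1]$. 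Applying this with $X$ equipped with the norm $|\cdot|_\tau$ supplied by Theorem~\ref{atheorem}(iii), and noting that the coarse Lipschitz data of $f$ relative to $|\cdot|_\tau$ is uniformly controlled (since the $|\cdot|_\tau$ are $M_0$-equivalent to $\|\cdot\|_X$), immediately gives the required family of norms on $Y$ with uniform distortion; choosing $\eps$ comparable to $C\tau^p$ yields $\overline{\rho}_{|\cdot|}(\sigma')\le C'\sigma'^p$ for $\sigma'\ge KM^2\tau$. So the uniformity is automatic from the statement cited, not something one must extract by re-examining the proof.
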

	
	\begin{proof} Let $X \in \textsf{A}_p$ and $Y$ a Banach space such that there exists a coarse Lipschitz equivalence $f$ from $X$ to $Y$. Then, Corollary 6.7 in \cite{DaletLancien}, which is an extension of results in \cite{GKL2001}, insures the existence of a universal constant $K>0$ and a constant $M>0$ (depending on $f$) so that for any $\eps>0$, there exists a norm $|\ |$ on $Y$ satisfying 
		$$\forall y\in Y,\ \|y\|_Y \le |y| \le M\|y\|_Y\ \ \text{and}\ \ \forall \sigma\in [0,1],\ \overline{\rho}_{|\ |}(KM^2\sigma)\le \overline{\rho}_{X}(\sigma)+\eps.$$
		With this result in hands, it is clear that characterization $(iii)$ of $\textsf{A}_p$ in Theorem \ref{atheorem} is stable under coarse Lipschitz equivalences. 
	\end{proof}
	
	We also have.
	\begin{theorem} For any $p \in (1,\infty)$, the class ${\emph{\textsf{N}}}_p$ is stable under coarse-Lipschitz equivalences.
	\end{theorem}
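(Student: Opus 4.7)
The plan is to mimic the proof of the $\textsf{A}_p$ case just given, replacing the renorming characterization of Theorem \ref{atheorem}$(iii)$ by the (weaker, pointwise-in-$\sigma$) renorming characterization of Theorem \ref{ntheorem}$(iii)$. As the authors remark in the introduction (``this is deduced from already existing results, but was unnoticed''), once the renorming description of $\textsf{N}_p$ is available, combining it with Corollary 6.7 of \cite{DaletLancien} yields the stability immediately.

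Concretely, let $X \in \textsf{N}_p$ and let $f\colon X \to Y$ be a coarse Lipschitz equivalence. By Theorem \ref{ntheorem}$(iii)$ there exist constants $M_0 \geq 1$ and $c>0$ such that for every $\sigma \in (0,1]$ there is an equivalent norm $|\cdot|^\sigma$ on $X$ with $M_0^{-1}|x|^\sigma \leq \|x\|_X \leq M_0|x|^\sigma$ and $\overline{\rho}_{|\cdot|^\sigma}(\sigma) \leq c\sigma^p$. Since these norms are uniformly equivalent to $\|\cdot\|_X$, the map $f$ is a coarse Lipschitz equivalence between $(X,|\cdot|^\sigma)$ and $(Y,\|\cdot\|_Y)$ with coarse Lipschitz constants controlled independently of $\sigma$. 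Corollary 6.7 of \cite{DaletLancien}, applied to each of these renormed spaces, then furnishes a universal constant $K$, a constant $M_1$ independent of $\sigma$, and for every $\varepsilon>0$ a norm $|\cdot|^\sigma_Y$ on $Y$ with $\|y\|_Y \leq |y|^\sigma_Y \leq M_1\|y\|_Y$ and
$$\overline{\rho}_{|\cdot|^\sigma_Y}(KM_1^2 s) \leq \overline{\rho}_{|\cdot|^\sigma}(s)+\varepsilon \quad \text{for every } s\in[0,1].$$
Specializing to $s=\sigma$ and choosing $\varepsilon=\sigma^p$ yields $\overline{\rho}_{|\cdot|^\sigma_Y}(KM_1^2\sigma)\leq (c+1)\sigma^p$. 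After the change of variable $\tau=KM_1^2\sigma$, this says $\overline{\rho}_{|\cdot|^\sigma_Y}(\tau)\leq c'\tau^p$ for a constant $c'$ independent of $\sigma$. As $\sigma$ runs over the relevant interval, $\tau$ sweeps out a right neighborhood of $0$; for $\tau$ bounded away from $0$ the inequality is automatic since $\overline{\rho}$ is bounded on $[0,1]$. This delivers precisely characterization $(iii)$ of Theorem \ref{ntheorem} for $Y$, so $Y\in\textsf{N}_p$.

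The one point that actually requires verification, and which is the only possible obstacle, is the uniformity across $\sigma\in(0,1]$ of the constants appearing in the application of Corollary 6.7 of \cite{DaletLancien}. This comes for free from the uniform equivalence $M_0^{-1}|x|^\sigma\leq \|x\|_X\leq M_0|x|^\sigma$: the coarse Lipschitz constants of $f$ with respect to $|\cdot|^\sigma$ differ from those with respect to $\|\cdot\|_X$ by a multiplicative factor depending only on $M_0$, hence are bounded independently of $\sigma$. The non-separable case then follows, as in the $\textsf{A}_p$ proof and as explained before the statement, by a standard separable saturation argument together with the separable determination of $\textsf{N}_p$ established in Theorem \ref{separabledetermination}.
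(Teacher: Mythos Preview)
Your proposal is correct and follows essentially the same approach as the paper, which simply states that the result is a direct consequence of Corollary 6.7 in \cite{DaletLancien} combined with characterization $(iii)$ of Theorem \ref{ntheorem}. You have helpfully made explicit the one point the paper leaves implicit, namely the uniformity in $\sigma$ of the constants produced by Corollary 6.7, which follows from the uniform equivalence of the renormings $|\cdot|^\sigma$ to $\|\cdot\|_X$; the separable-determination remark at the end is not needed here since Corollary 6.7 of \cite{DaletLancien} already applies in the general case, but it does no harm.
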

	
	\begin{proof}  Similarly to the previous proof, this is a direct consequence of Corollary 6.7 in \cite{DaletLancien} and characterization $(iii)$ of $\textsf{N}_p$ in Theorem \ref{ntheorem}. 
	\end{proof}
	
	Obviously the above argument can also be applied to prove that $\textsf{A}_\infty$ is stable under coarse Lipschitz equivalences, which, as we explained, was already known. 
	
	\begin{problem} In \cite{Kalton2013}, N. Kalton proved that for $1<p<\infty$, the class $\textsf{T}_p$ is not stable under uniform homeomorphisms. It is not known however whether the class $\textsf{T}_\infty$ is stable under coarse Lipschitz isomorphisms (or even uniform homeomorphisms). In fact, a positive answer would imply that a Banach space coarse Lipschitz equivalent to $c_0$ is linearly isomorphic to $c_0$, which is an important open question. Indeed, it is known that the class of
		all $\mathcal L_\infty$ spaces is stable under coarse Lipschitz equivalences \cite{HeinrichManckiewicz1982} and that a
		$\mathcal L_\infty$ subspace of $c_0$ is isomorphic to $c_0$ \cite{JohnsonZippin1972}. 
	\end{problem}
	

	\section{Examples}
	
	We gather in this section a few known examples of $\textsf{T}_\infty$ or $\textsf{A}_\infty$ spaces and related problems.
	
	\subsection{Non-separable uniformly flatenable spaces} 
	
	The first obvious examples of non-separable $\textsf{T}_\infty$ (or equivalently, AUF-renormable)  spaces are given by $c_0(\Gamma)$ spaces, with $\Gamma$ uncountable.
	
	\begin{proposition}\label{c_0(gamma)} For any set $\Gamma$, the space $c_0(\Gamma)$ equipped with its natural norm is AUF.
	\end{proposition}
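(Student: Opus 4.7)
The plan is to verify the definition of AUF directly using the characteristic finiteness feature of $c_0(\Gamma)$: for every $f\in c_0(\Gamma)$ and every $\epsilon>0$, the set $\{\gamma:|f(\gamma)|>\epsilon\}$ is finite. We shall actually show the strong statement $\overline{\rho}_{c_0(\Gamma)}(\sigma)=0$ for all $\sigma\in[0,1]$, which is well beyond what AUF requires.

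Fix $\sigma\in[0,1]$ and $y\in B_{c_0(\Gamma)}$. The first step is to produce, for an arbitrary $\epsilon>0$, a natural finite-codimensional subspace tailored to $y$: set
\[F_\epsilon=\{\gamma\in\Gamma:|y(\gamma)|>\epsilon\},\]
which is finite by the definition of $c_0(\Gamma)$, and let
\[E_\epsilon=\{x\in c_0(\Gamma):x(\gamma)=0\text{ for all }\gamma\in F_\epsilon\}.\]
Then $E_\epsilon\in\co(c_0(\Gamma))$.

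The second step is the coordinatewise estimate. For any $x\in B_{E_\epsilon}$ and any $\gamma\in\Gamma$, we split cases: if $\gamma\in F_\epsilon$ then $x(\gamma)=0$, hence $|(y+\sigma x)(\gamma)|=|y(\gamma)|\le\|y\|\le1$; if $\gamma\notin F_\epsilon$ then $|y(\gamma)|\le\epsilon$, hence $|(y+\sigma x)(\gamma)|\le\epsilon+\sigma\le1+\epsilon$. Taking the supremum over $\gamma$ yields
\[\sup_{x\in B_{E_\epsilon}}\|y+\sigma x\|_\infty\le1+\epsilon.\]

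The final step is to assemble these pieces. Since the inequality above holds for every $\epsilon>0$, we obtain
\[\inf_{E\in\co(c_0(\Gamma))}\sup_{x\in B_E}\|y+\sigma x\|\le1,\]
and since this holds for every $y\in B_{c_0(\Gamma)}$, we conclude $\overline{\rho}_{c_0(\Gamma)}(\sigma)\le0$, so $\overline{\rho}_{c_0(\Gamma)}(\sigma)=0$. In particular $c_0(\Gamma)$ is AUF (with $\sigma_0$ any positive number at most $1$).

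There is no real obstacle here: the proof is essentially a direct verification exploiting the defining property of $c_0(\Gamma)$. The only subtlety worth pointing out is that the construction of $E_\epsilon$ depends on $y$, which is legitimate because the definition of $\overline{\rho}$ allows the finite-codimensional subspace to depend on $y$.
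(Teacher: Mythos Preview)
Your proof is correct and is precisely the detailed verification that the paper omits: the paper's own proof simply asserts that $\overline{\rho}_{c_0(\Gamma)}(t)=0$ for all $t\in(0,1)$ ``follows immediately from the definition of the norm of $c_0(\Gamma)$'', and your argument with the finite set $F_\epsilon$ and the subspace $E_\epsilon$ is exactly the natural way to make that immediate.
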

	
	\begin{proof} It follows immediately from the definition of the norm of $c_0(\Gamma)$ that 
		$$\forall t\in (0,1)\ \ \overline{\rho}_{c_0(\Gamma)}(t)=0.$$
	\end{proof}
	
	The next result was already known (see the remark after the proof). We present first a proof using that ${\textsf{T}}_\infty$ is a 3SP.
	
	\begin{theorem}\label{C(K)} Let $K$ be a compact scattered space such that its Cantor derived set of order $\omega$, $K^{(\omega)}$ is empty. Then $C(K)$ is $\textsf{\emph{T}}_\infty$.
	\end{theorem}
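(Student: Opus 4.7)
The plan is to induct on the Cantor--Bendixson rank of $K$. Since $K^{(\omega)}=\varnothing$ and $K$ is compact, the decreasing sequence $(K^{(n)})_{n\in\nn}$ of closed sets has empty intersection, so by compactness there exists a least $n\in\nn$ with $K^{(n)}=\varnothing$. I would induct on this $n$. The base case $n\leqslant 1$ is trivial: $K$ is either empty or consists of isolated points only, hence (being compact) is finite, so $C(K)$ is finite-dimensional and therefore in $\textsf{T}_\infty$.

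For the inductive step, set $L=K^{(1)}$, which is again compact scattered and satisfies $L^{(n-1)}=K^{(n)}=\varnothing$, so by the inductive hypothesis $C(L)\in \textsf{T}_\infty$. Consider the restriction operator $R:C(K)\to C(L)$, which is surjective by Tietze's extension theorem, so it identifies $C(L)$ with the quotient $C(K)/I$, where
\[
I=\{f\in C(K): f|_L=0\}.
\]
The key identification is that $I$ is isometrically isomorphic to $c_0(K\setminus L)$: every point of $K\setminus L$ is isolated in $K$, so $f\in I$ is determined by its restriction to $K\setminus L$, and the continuity of $f$ at the points of $L$ (combined with $f|_L=0$) forces, for each $\ee>0$, the set $\{x\in K\setminus L:|f(x)|\geqslant \ee\}$ to be closed in $K$ hence compact, and so finite because $K\setminus L$ is discrete. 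Conversely any $c_0(K\setminus L)$-function extended by $0$ on $L$ is continuous by the same finiteness argument.

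Having made this identification, Proposition \ref{c_0(gamma)} gives $I\cong c_0(K\setminus L)\in \textsf{T}_\infty$, while $C(K)/I\cong C(L)\in \textsf{T}_\infty$ by the inductive hypothesis. Applying Theorem \ref{AUF3SP} (the fact that $\textsf{T}_\infty$ is a three-space property) to the short exact sequence $0\to I\to C(K)\to C(L)\to 0$ then yields $C(K)\in \textsf{T}_\infty$, completing the induction.

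There is no genuine obstacle in this plan; the argument is a straightforward combination of a standard Cantor--Bendixson induction with the three-space theorem. The only point requiring a moment of care is the identification of the kernel of $R$ with $c_0(K\setminus L)$, which uses in an essential way that every point of $K\setminus L$ is isolated in $K$ (so characteristic functions of singletons are continuous) and that $K$ is compact (so the level sets $\{|f|\geqslant\ee\}$ are finite). Once the three-space property for $\textsf{T}_\infty$ is in hand, all other ingredients are routine.
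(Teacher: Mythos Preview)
Your proof is correct and follows essentially the same approach as the paper: induction on the Cantor--Bendixson rank, identifying the kernel of the restriction map $C(K)\to C(K')$ with $c_0(K\setminus K')$ via Proposition~\ref{c_0(gamma)}, applying the inductive hypothesis to $C(K')$, and concluding with the three-space property for $\textsf{T}_\infty$ (Theorem~\ref{AUF3SP}). In fact you supply a bit more justification for the isometry $I\cong c_0(K\setminus L)$ than the paper does.
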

	
	\begin{proof} We shall prove it by induction on $n\in \mathbb N$ such that $K^{(n)}=\emptyset$. If $n=1$, then $K'=\emptyset$ and $K$ is finite. Therefore $C(K)$ is finite-dimensional and thus is $\textsf{T}_\infty$. Assume that the statement is true for $n\in \mathbb N$ and that $K^{(n+1)}=\emptyset$. The subspace of $C(K)$ defined by $Y=\{f\in C(K),\ f_{|_{K'}}=0\}$ is clearly isometric to $c_0(K\setminus K')$ and by Proposition \ref{c_0(gamma)} is $\textsf{T}_\infty$. Let now $Q$ be the restriction mapping from $C(K)$ to $C(K')$. It follows from the Tietze extension theorem that $Q$ is onto. Since $Y$ is the kernel of $Q$, we have that $C(K')$ is isomorphic to $C(K)/Y$. By induction hypothesis, $C(K')$ and thus $C(K)/Y$ are $\textsf{T}_\infty$. It now follows from Theorem \ref{AUF3SP} that $C(K)$ is $\textsf{T}_\infty$.
	\end{proof}
	
	\begin{remark} As we already mentioned, this is not a new result. Let us indicate a few other ways to prove it.
		\begin{enumerate}
			\item Let $K$ be a compact space such that $K^{(n)}=\emptyset$, $n\in \mathbb N$. The dual of $C(K)$ is isometric to $\ell_1(K)$. Define the following equivalent norm on $\ell_1(K)$:
			$$\forall \mu \in \ell_1(K),\ \ |\mu|=\sum_{x\in K} \alpha_x |\mu(x)|,$$
			where $\alpha_x=2^{-i}$ with $0\le i\le n-1$ such that $x\in K^{(i)}\setminus K^{(i+1)}$. This formula comes from \cite{Lancien1995}, where it is proved that this norm is $1$-AUC$^*$ and is the dual norm of an equivalent norm on $C(K)$. So its predual norm is AUF. 
			
			\item Let $X$ be a separable subspace of $C(K)$ and denote $Y$ the closed sub-$\ast$-algebra of $C(K)$ generated by $X$. Then $Y$ is isometric to a space $C(L)$, where $L$ is a compact metrizable space such that $L^{(\omega)}=\emptyset$. It follows from \cite{BessagaPelczynski1960} that $Y$ is either finite dimensional or isomorphic to $c_0(\mathbb N)$. So $X$ is $\textsf{T}_\infty$ and we can apply the separable determination of $\textsf{T}_\infty$ (Theorem \ref{separabledetermination}) to deduce  that $C(K)$ is $\textsf{T}_\infty$. 
			
			\item We conclude with the most sophisticated argument. It is known that if $K$ is a compact space such that $K^{(\omega)}=\emptyset$, then $C(K)$ is Lipschitz isomorphic to some $c_0(\Gamma)$ (see \cite{DevilleGodefroyZizler1990}). On the other hand, being AUF renormable is stable under Lipschitz isomorphisms (\cite{GKL2000} for the separable case and \cite{DaletLancien} for the general case, or use separable determination and saturation).
		\end{enumerate}
		
	\end{remark}
	
	It is also important to mention that Theorem \ref{C(K)}  provides (only in the non-separable setting)  examples of $\textsf{T}_\infty$ spaces that are not isomorphic to a quotient or a subspace of a $c_0(\Gamma)$ space. Indeed we have 
	
	\begin{theorem} There exists a compact space $K$ such that $K^{(3)}=\emptyset$, but $C(K)$ is not isomorphic to a quotient of a subspace of a $c_0(\Gamma)$ space.
	\end{theorem}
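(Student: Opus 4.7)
The plan is to exhibit a scattered compactum $K$ of Cantor--Bendixson height $3$ such that $C(K)$ is not weakly compactly generated (WCG), and then invoke the fact that every quotient of a subspace of $c_0(\Gamma)$ is WCG. By the Amir--Lindenstrauss characterisation, $C(K)$ is WCG if and only if $K$ is Eberlein, so the task reduces to producing a non-Eberlein scattered compactum of height $3$.

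For the compactum I would appeal to one of the classical constructions of a scattered non-Eberlein compact of finite rank. Concretely, take $K = \Gamma \sqcup \mathcal{A} \sqcup \{\infty\}$, where $\Gamma$ is uncountable and $\mathcal{A}$ is an appropriately chosen almost-disjoint (or adequate) family of infinite subsets of $\Gamma$, in the spirit of Benyamini--Rudin--Wage or Pol, with the natural topology that makes $\Gamma$ discrete, each $A \in \mathcal{A}$ the limit of its own elements modulo finite traces, and $\infty$ the unique accumulation point of $\mathcal{A}$. A direct computation gives $K^{(3)} = \emptyset$, and a combinatorial argument on $\mathcal{A}$ shows that $K$ has no weakly compact Banach-space realisation, so $K$ is not Eberlein. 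Theorem~\ref{C(K)} already places $C(K)$ in $\textsf{T}_\infty$.

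Suppose, for contradiction, that $C(K) \simeq Y/Z$ with $Y$ a closed subspace of some $c_0(\Gamma')$. The space $c_0(\Gamma')$ is Hilbert-generated via the natural contraction $\ell_2(\Gamma') \hookrightarrow c_0(\Gamma')$, and the class of subspaces of Hilbert-generated Banach spaces is both trivially closed under further subspaces and contained in the class of WCG spaces (since such spaces are uniformly Eberlein, a theorem of Fabian--Godefroy--H\'ajek--Zizler). Hence $Y$ is WCG, and by the quotient-stability part of the Amir--Lindenstrauss theorem, $Y/Z \simeq C(K)$ is WCG, contradicting the failure of WCG for $C(K)$.

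The principal obstacle is the subspace-heredity of WCG inside $c_0(\Gamma)$: subspaces of arbitrary WCG spaces need not be WCG (Rosenthal), so one must genuinely exploit the Hilbert-generated structure of $c_0(\Gamma)$ rather than bare WCG. A secondary task is the combinatorial verification that the chosen $K$ is not Eberlein; this reduces to an adequate-family obstruction against weak-compact embedding and follows established patterns in the scattered-compacta literature.
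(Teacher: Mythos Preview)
Your strategy is sound---find a scattered $K$ of height $3$ and a property enjoyed by $c_0(\Gamma)$, stable under subspaces and quotients, that $C(K)$ fails---but there is a genuine error in execution. You assert that subspaces of Hilbert-generated spaces are WCG because ``such spaces are uniformly Eberlein.'' This is false: having uniformly Eberlein dual ball is equivalent to being a \emph{subspace} of a Hilbert-generated (hence WCG) space, not to being WCG itself. Rosenthal's non-WCG subspace of $L^1(\{0,1\}^\Gamma)$ is already a counterexample, since $L^1$ of a probability space is Hilbert-generated via $L^2\hookrightarrow L^1$. So the implication ``$Y\subset c_0(\Gamma')\Rightarrow Y$ WCG'' is unjustified, and your chain breaks exactly at the step you yourself flagged as the principal obstacle.

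The fix is to replace WCG by the class of spaces with Eberlein weak$^*$-dual ball (equivalently, subspaces of WCG spaces). This class is trivially hereditary and is quotient-stable because $B_{(Y/Z)^*}$ embeds weak$^*$-homeomorphically as a closed subset of $B_{Y^*}$; then $C(K)\simeq Y/Z$ would force $B_{C(K)^*}$, and hence $K$ via the Dirac embedding, to be Eberlein, giving the contradiction you intend. The paper takes a closely related but cleaner route: it uses the Mr\'owka--Isbell space (so $\Gamma=\mathbb{N}$, not uncountable as you wrote, with $\mathcal{A}$ an uncountable almost-disjoint family on $\mathbb{N}$) and the invariant WLD rather than Eberlein. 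The non-WLD argument is particularly short: separability of $K$ gives $C(K)$ a countable separating family of Dirac functionals, while $K$ uncountable and scattered makes $C(K)$ non-separable, and these two facts together preclude WLD. This sidesteps any combinatorial verification that $K$ is non-Eberlein---though in fact separable plus non-metrizable already rules out Eberlein as well.
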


	Let us indicate this now classical construction. There exists a scattered separable uncountable compact space $K$ so that $K^{(3)}=\emptyset$. This space is often called the Mr\'owka-Isbell space and its construction is due to Mr\'owka \cite{Mrowka} and Isbell (credited in \cite{GillmanJerison}). We also refer to its description in \cite{ZizlerHandbook} page 1757, where its construction is  based on the Johnson-Lindenstrauss space $JL_0$ \cite{JohnsonLindenstrauss1974}. Since $K$ is separable, $C(K)$ admits a countable family of separating functionals (the Dirac maps at the points of a dense countable subset of $K$). But $C(K)$ is not separable, as $K$ is uncountable and scattered and therefore non metrizable. It follows that $C(K)$ is not weakly Lindelöf determined (WLD in short): see Theorem 5.37 and Proposition 5.40 in \cite{HMVZ}, or see \cite{VWZ1994}. We conclude by recalling that $c_0(\Gamma)$ is always WLD and that being WLD is stable by passing to subspaces or quotients (see also \cite{HMVZ} and references therein).
	
	\begin{problem} 
		We do not know if there exists a $\textsf{T}_\infty$ space which is not isomorphic to quotient of a subspace of a $C(K)$ space with $K^{(\omega)}=\emptyset$. 
	\end{problem}
	
	\subsection{An interesting $\textsf{A}_\infty$ space}
	
	We already explained that showing that $\textsf{T}_\infty$ is stable under coarse Lipschitz equivalences would imply that a Banach space coarse Lipschitz equivalent to $c_0$ is linearly isomorphic to $c_0$. At this point it is only known that a Banach space coarse Lipschitz equivalent to $c_0$ is $\textsf{A}_\infty$ and $\mathcal L_\infty$. Another hope was to show that a separable Banach space which is $\textsf{A}_\infty$ and $\mathcal L_\infty$ is necessarily $\textsf{T}_\infty$ (see conjecture after Theorem 5.6 in \cite{GKL2001}). Let us mention here that this question has been solved negatively by Argyros, Gasparis, and Motakis in \cite{AGM2016}, who showed the existence of a separable Banach space $X$ which is $\textsf{A}_\infty$ and $\mathcal L_\infty$ but so that every infinite dimensional subspace of $X$ contains an infinite dimensional reflexive subspace. 
	
	\medskip
	{\bf Acknowledgments.} We thank Petr H\'{a}jek for valuable discussions on non-separable $C(K)$ spaces and Gilles Godefroy for pointing to us the important example by Argyros, Gasparis and Motakis. We also thank the referee for her/his suggestions that helped improve the presentation of this article.

	\bibliographystyle{amsplain}
	
	\begin{bibsection}
		\begin{biblist}
			
			
			\bib{AGM2016}{article}{
				author={Argyros, S. A.},
				author={Gasparis, I.},
				author={Motakis, P.},
				title={On the structure of separable $\mathcal{L}_\infty$-spaces},
				journal={Mathematika},
				volume={62},
				date={2016},
				number={3},
				pages={685--700}
			}

			\bib{Ball2013}{article}{
				author={Ball, K.},
				Title = {{The Ribe programme}},
				BookTitle = {S\'eminaire Bourbaki. Volume 2011/2012. Expos\'es 1043--1058},
				ISBN = {978-2-85629-371-3},
				Pages = {147--159, ex},
				Year = {2013},
				Publisher = {Paris: Soci\'et\'e Math\'ematique de France (SMF)},
			}

			
			\bib{BLMSJIMJ2021}{article}{
				author={Baudier, F.},
				author={Lancien, G.},
				author={Motakis, P.},
				author={Schlumprecht, Th.},
				title={A new coarsely rigid class of Banach spaces},
				journal={J. Inst. Math. Jussieu},
				volume={20},
				date={2021},
				number={5},
				pages={1729--1747}
			}
			
			
			\bib{BessagaPelczynski1960}{article}{
				author={Bessaga, C.},
				author={Pe\l czy\'{n}ski, A.},
				title={Spaces of continuous functions. IV. On isomorphical classification
					of spaces of continuous functions},
				journal={Studia Math.},
				volume={19},
				date={1960},
				pages={53--62}
			}
			
			
			\bib{livre-3SP}{book}{
				author={Castillo, J. M. F.},
				author={Gonz\'{a}lez, M.},
				title={Three-space problems in Banach space theory},
				series={Lecture Notes in Mathematics},
				volume={1667},
				publisher={Springer-Verlag, Berlin},
				date={1997},
				pages={xii+267}
			}
			
			
			\bib{CauseyPositivity2018}{article}{
				author={Causey, R. M.},
				title={Power type asymptotically uniformly smooth and asymptotically
					uniformly flat norms},
				journal={Positivity},
				volume={22},
				date={2018},
				number={5},
				pages={1197--1221}
			}
			
			
			\bib{CauseyIllinois2018}{article}{
				author={Causey, R. M.},
				title={Concerning $q$-summable Szlenk index},
				journal={Illinois J. Math.},
				volume={62},
				date={2018},
				number={1-4},
				pages={381--426}
			}
			
			
			\bib{CauseyTAMS2019}{article}{
				author={Causey, R. M.},
				title={Power type $\xi$-asymptotically uniformly smooth norms},
				journal={Trans. Amer. Math. Soc.},
				volume={371},
				date={2019},
				number={3},
				pages={1509--1546},
				issn={0002-9947},
				review={\MR{3894026}},
				doi={10.1090/tran/7336},
			}
			
			
			\bib{Causey3.5}{article}{
				author={Causey, R. M.},
				title={Three and a half asymptotic properties},
				journal={Studia Math.},
				volume={257},
				date={2021},
				number={2},
				pages={155--212}
			}
			
			
			\bib{DKC}{article}{
				author={Causey, R. M.},
				author={Draga, S.},
				author={Kochanek, T.},
				title={Operator ideals and three-space properties of asymptotic ideal
					seminorms},
				journal={Trans. Amer. Math. Soc.},
				volume={371},
				date={2019},
				number={11},
				pages={8173--8215}
			}
			
			
			\bib{DaletLancien}{article}{
				author={Dalet, A.},
				author={Lancien, G.},
				title={Some properties of coarse Lipschitz maps between Banach spaces},
				journal={North-West. Eur. J. Math.},
				volume={3},
				date={2017},
				pages={41--62, i}
			}
			
			
			\bib{DevilleGodefroyZizler1990}{article}{
				author={Deville, R.},
				author={Godefroy, G.},
				author={Zizler, V.},
				title={The three space problem for smooth partitions of unity and $C(K)$
					spaces},
				journal={Math. Ann.},
				volume={288},
				date={1990},
				number={4},
				pages={613--625}
			}
			
			
			\bib{DKLR2017}{article}{
				author={Dilworth, S. J.},
				author={Kutzarova, D.},
				author={Lancien, G.},
				author={Randrianarivony, N. L.},
				title={Equivalent norms with the property $(\beta)$ of Rolewicz},
				journal={Rev. R. Acad. Cienc. Exactas F\'{\i}s. Nat. Ser. A Mat. RACSAM},
				volume={111},
				date={2017},
				number={1},
				pages={101--113}
			}
			
			\bib{DragaKochanek}{article}{
				author={Draga, S.},
				author={Kochanek, T.},
				title={The Szlenk power type and tensor products of Banach spaces},
				journal={Proc. Amer. Math. Soc.},
				volume={145},
				date={2016},
				number={},
				pages={1685--1698}
			}
			
			
			\bib{ELP}{article}{
				author={Enflo, P.},
				author={Lindenstrauss, J.},
				author={Pisier, G.},
				title={On the ``three space problem''},
				journal={Math. Scand.},
				volume={36},
				date={1975},
				number={2},
				pages={199--210}
			}
			
			
			\bib{fov}{article}{
				title={Hamming graphs and concentration properties in Banach spaces}, 
				author={Fovelle, A.},
				year={2021},
				journal={arXiv preprint arXiv:2106.04297}
			}
			
			
			\bib{GhysDelaHarpe}{book}{
				title={Sur les groupes hyperboliques d'apr\`es Mikhael Gromov},
				series={Progress in Mathematics},
				volume={50},
				author={Ghys, E.},
				author={de la Harpe, P.},
				publisher={Birkh\"{a}user},
				date={1990}
			}

			\bib{GillmanJerison}{book}{
				title={ Rings of Continuous Functions},
				author={Gillman, L.},
				author={Jerison, M.},
				publisher={Van Nostrand, Princeton, NJ.},
				date={1960}
			}

			
			\bib{GKL2000}{article}{
				author={Godefroy, G.},
				author={Kalton, N. J.},
				author={Lancien, G.},
				title={Subspaces of $c_0(\mathbf N)$ and Lipschitz isomorphisms},
				journal={Geom. Funct. Anal.},
				volume={10},
				date={2000},
				number={4},
				pages={798--820}
			}
			
			
			\bib{GKL2001}{article}{
				author={Godefroy, G.},
				author={Kalton, N. J.},
				author={Lancien, G.},
				title={Szlenk indices and uniform homeomorphisms},
				journal={Trans. Amer. Math. Soc.},
				volume={353},
				date={2001},
				number={10},
				pages={3895--3918}
			}
			
			\bib{GLZ2014}{article}{
				author={Godefroy, G.},
				author={Lancien, G.},
				author={Zizler, V.},
				title={The non-linear geometry of Banach spaces after Nigel Kalton},
				journal={Rocky Mountain J. of Math.},
				volume={44},
				date={2014},
				number={5},
				pages={1529--1583.}
			}
			
			
			\bib{Gromov1987}{article}{
				author={Gromov, M.},
				title={Hyperbolic groups},
				conference={
					title={Essays in group theory},
				},
				book={
					series={Math. Sci. Res. Inst. Publ.},
					volume={8},
					publisher={Springer, New York},
				},
				date={1987},
				pages={75--263}
			}
			
			
			\bib{HMVZ}{book}{
				author={H\'{a}jek, P.},
				author={Montesinos Santaluc\'{\i}a, V.},
				author={Vanderwerff, J.},
				author={Zizler, V.},
				title={Biorthogonal systems in Banach spaces},
				series={CMS Books in Mathematics/Ouvrages de Math\'{e}matiques de la SMC},
				volume={26},
				publisher={Springer, New York},
				date={2008},
				pages={xviii+339}
			}
			
			
			\bib{HeinrichManckiewicz1982}{article}{
				author={Heinrich, S.},
				author={Mankiewicz, P.},
				title={Applications of ultrapowers to the uniform and Lipschitz
					classification of Banach spaces},
				journal={Studia Math.},
				volume={73},
				date={1982},
				number={3},
				pages={225--251}
			}
			
			
			\bib{JohnsonLindenstrauss1974}{article}{
				author={Johnson, W. B.},
				author={Lindenstrauss, J.},
				title={Some remarks on weakly compactly generated Banach spaces},
				journal={Israel J. Math.},
				volume={17},
				date={1974}
			}
			
			
			\bib{JohnsonZippin1972}{article}{
				author={Johnson, W. B.},
				author={Zippin, M.},
				title={On subspaces of quotients of $(\sum G_{n})_{\ell_p}$ and $(\sum
					G_{n})_{c_{0}}$},
				journal={Israel J. Math.},
				volume={13},
				date={1972},
				pages={311--316 (1973)}
			}
			
			
			\bib{JohnsonZippin1974}{article}{
				author={Johnson, W. B.},
				author={Zippin, M.},
				title={Subspaces and quotient spaces of $(\sum G_{n})_{\ell_{p}}$ and
					$(\sum G_{n})_{c_{0}}$},
				journal={Israel J. Math.},
				volume={17},
				date={1974},
				pages={50--55}
			}

			\bib{Kalton2012MathAnn}{article}{
				author={Kalton, N. J.},
				title={The uniform structure of Banach spaces},
				journal={Math. Ann.},
				volume={354},
				date={2012},
				number={},
				pages={1247--1288}
			}
			
			\bib{Kalton2013TAMS}{article}{
				author={Kalton, N. J.},
				title={Uniform homeomorphisms of Banach spaces and asymptotic structure},
				journal={Trans. Amer. Math. Soc.},
				volume={365},
				date={2013},
				number={},
				pages={1247--1288}
			}
			
			\bib{Kalton2013}{article}{
				author={Kalton, N. J.},
				title={Examples of uniformly homeomorphic Banach spaces},
				journal={Israel J. Math.},
				volume={194},
				date={2013},
				number={1},
				pages={1051--1079}
			}
			
			
			\bib{KaltonRandrianarivony2008}{article}{
				author={Kalton, N. J.},
				author={Randrianarivony, N. L.},
				title={The coarse Lipschitz geometry of $\ell_p\oplus l_q$},
				journal={Math. Ann.},
				volume={341},
				date={2008},
				number={1},
				pages={223--237}
			}
			
			
			\bib{KP}{article}{
				author={Kalton, N. J.},
				author={Peck, N. T.},
				title={Twisted sums of sequence spaces and the three space problem},
				journal={Trans. Amer. Math. Soc.},
				volume={255},
				date={1979},
				pages={1--30}
			}
			
			
			\bib{KreinSmulian1940}{article}{
				author={Krein, M.},
				author={\v{S}mulian, V.},
				title={On regularly convex sets in the space conjugate to a Banach space},
				journal={Ann. of Math. (2)},
				volume={41},
				date={1940},
				pages={556--583}
			}
			
			
			\bib{Lancien1995}{article}{
				author={Lancien, G.},
				title={On uniformly convex and uniformly Kadec-Klee renormings},
				journal={Serdica Math. J.},
				volume={21},
				date={1995},
				number={1},
				pages={1--18}
			}
			
			
			\bib{Lancien2006}{article}{
				author={Lancien, G.},
				title={A survey on the Szlenk index and some of its applications},
				language={English, with English and Spanish summaries},
				journal={RACSAM. Rev. R. Acad. Cienc. Exactas F\'{\i}s. Nat. Ser. A Mat.},
				volume={100},
				date={2006},
				number={1-2},
				pages={209--235}
			}

			\bib{Mrowka}{article}{
				author={Mr\'owka, S.},
				title={On completely regular spaces},
				journal={Fund. Math.},
				volume={41},
				date={1954},
				number={},
				pages={105--106}
			}

			\bib{Naor2012}{article}{
				author={Naor, A.},
				title={An introduction to the Ribe program},
				journal={Jpn. J. Math.},
				volume={7},
				date={2012},
				number={2},
				pages={167--233}
			}

			\bib{Ribe}{article}{
				author={Ribe, M.},
				title={On uniformly homeomorphic normed spaces},
				journal={Ark. Mat},
				volume={16},
				date={1978},
				number={},
				pages={1--9}
			}

			
			\bib{VWZ1994}{article}{
				author={Vanderwerff, J.},
				author={Whitfield, J. H. M.},
				author={Zizler, V.},
				title={Marku\v{s}evi\v{c} bases and Corson compacta in duality},
				journal={Canad. J. Math.},
				volume={46},
				date={1994},
				number={1},
				pages={200--211}
			}
			
			
			\bib{ZizlerHandbook}{article}{
				author={Zizler, V.},
				title={Nonseparable Banach spaces},
				conference={
					title={Handbook of the geometry of Banach spaces, Vol. 2},
				},
				book={
					publisher={North-Holland, Amsterdam},
				},
				date={2003},
				pages={1743--1816}
			}
			
		\end{biblist}
		
	\end{bibsection}
	
\end{document}